\providecommand{\tabularnewline}{\\}
\numberwithin{equation}{section}
\numberwithin{figure}{section}
  \theoremstyle{definition}
  \newtheorem{defn}{\protect\definitionname}[section]
  \theoremstyle{plain}
  \newtheorem{thm}{\protect\theoremname}[section]
  \theoremstyle{remark}
  \newtheorem{rem}{\protect\remarkname}[section]
  \theoremstyle{plain}
  \newtheorem{conjecture}{\protect\conjecturename}[section]
\renewcommand{\rho}{\varrho}
\renewcommand{\phi}{\varphi}
\theoremstyle{remark}
  \providecommand{\conjecturename}{Conjecture}
  \providecommand{\definitionname}{Definition}
  \providecommand{\remarkname}{Remark}
\providecommand{\theoremname}{Theorem}
\begin{document}

\title{Cyclotomic Aperiodic Substitution Tilings}

\author{Stefan Pautze}

\address{Stefan Pautze (Visualien der Breitbandkatze), Am Mitterweg 1, 85309
Pörnbach, Germany}

\urladdr{\texttt{http://www.pautze.de}}

\email{\texttt{stefan@pautze.de}}
\begin{abstract}
The class of Cyclotomic Aperiodic Substitution Tilings (CAST) is introduced.
Its vertices are supported on the $2n$-th cyclotomic field. It covers
a wide range of known aperiodic substitution tilings of the plane
with finite rotations. Substitution matrices and minimal inflation
multipliers of CASTs are discussed as well as practical use cases
to identify specimen with individual dihedral symmetry $D_{n}$ or
$D_{2n}$, i.e. the tiling contains an infinite number of patches
of any size with dihedral symmetry $D_{n}$ or $D_{2n}$ only by iteration
of substitution rules on a single tile.
\end{abstract}

\maketitle

\section{Introduction}

Tilings have been subject of wide research. Many of their properties
are investigated and discussed in view of their application in physics
and chemistry, in detail the research of crystals and quasicrystals,
such as D. Shechtman et al.'s renowned Al-Mn-alloy with icosahedral
point group symmetry \citep{PhysRevLett.53.1951}. Tilings are also
of mathmatical interest on their own \citep{Grunbaum:1986:TP:19304,oro38933}.\\
Without any doubt they have great aesthetic qualities as well. Some
of the most impressing examples are the tilings in M. C. Escher’s
art works~\citep{escher1982m}, H. Voderberg’s spiral tiling \citep{Voderberg1936,Voderberg1937}
and the pentagonal tilings of A. Dürer, J. Kepler and R. Penrose \citep{Grunbaum:1986:TP:19304,Lueck2000263},
just to name a few.\\
In contrast to a scientist, a designer may have different requirements.
Nevertheless the result may have interesting mathematical properties.
Due to the lack of a general criteria for subjective matters of taste,
we consider the following properties as preferable:
\begin{itemize}
\item The tiling shall be aperiodic and repetitive (locally indistinguishable)
to have an interesting (psychedelic) appearance.
\item The tiling shall have a small inflation multiplier for reasons of
economy. Large inflation multipliers either require large areas to
be covered or many tiles of a small size to be used.
\item The tiling shall yield ``individual dihedral symmetry'' $D_{n}$
or $D_{2n}$ with $n\geq4$. I.e. it shall contain an infinite number
of patches of any size with dihedral symmetry only by iteration of
substitution rules on a single tile. \\
Similar to G. Maloney we demand symmetry of individual tilings and
not only symmetry of tiling spaces \citep{Maloney2014,DBLP:journals/dmtcs/Maloney15}. 
\end{itemize}
The most common methods to generate aperiodic tilings are:
\begin{itemize}
\item Matching rules, as introduced in the very first publication of an
aperiodic tiling, the Wang-tiling by R. Berger \citep{Berger1966}.
See \citep{Socolar1989,Socolar1990,Gaehler1993} for more details.
\item Cut-and-project scheme, first described by de Bruijn for the Penrose
tiling in \citep{NG1981}, later extended to a general method, see
\citep{lagarias1996} and \citep{Moody97} and references therein
for more details, notably the earlier general method described in
\citep{meyer1972}. 
\item Duals of multi grids as introduced by de Bruijn \citep{DEBRUIJNN.G.1986}
with equidistant spacings are equivalent to the cut-and-project scheme
\citep{0305-4470-19-2-020}.\\
Duals of multi grids with aperiodic spacings as introduced by Ingalls
\citep{Ingalls:sp0042,Ingalls1993177} are close related to Ammann
bars, see \citep{Grunbaum:1986:TP:19304,Socolar1989,LUeCK1993} and
\citep[Ch. 5]{scheffer1998} for details and examples.
\item The idea of substitution rules or substitutions in general is a rather
old concept, e.g. Koch’s snowflake \citep{HvK1904,HvK1906,Mandelbrot77a}
or Rep-Tilings \citep{Gardner1963}. However, it seems its first consequent
application to tile the whole Euclidean plane aperiodically appeared
with the Penrose tiling \citep{Penrose1974,Gardner1977,Penrose1979,Grunbaum:1986:TP:19304,oro38933}.
\end{itemize}
Among those methods substitution rules may be the easiest approach
to construct aperiodic tilings. Additionally they have some other
advantages:
\begin{itemize}
\item The inflation multipliers of tilings obtained by the cut-and-project
scheme are limited to PV-numbers. According to \citep{lagarias1996}
and \citep{Moody97} this was first noted by \citep{meyer1972}. This
limitation does not apply to substitution tilings.
\item Matching rules tend to be more complex than substitution rules. See
\citep{GoodmanStrauss1998} and \citep{goodman2003} for examples. 
\end{itemize}
In view of the preferable properties we will introduce the class of
Cyclotomic Aperiodic Substitution Tilings (CAST). Its vertices are
supported on the $2n$-th cyclotomic field. It covers a wide range
of known aperiodic substitution tilings of the plane with finite rotations.
Its properties, in detail substitution matrices, minimal inflation
multiplier and aperiodicity are discussed in Section~\ref{sec:Cyclotomic_Aperiodic_Substitution_Tilings}.
CASTs with minimal or at least small inflation multiplier are presented
in Sections~\ref{sec:CASTs_with_minimal_inflation_multiplier}~and~\ref{sec:CASTs_with_inflation_multiplier_=0000B5n,n-1/2_n_odd},
which also includes a generalization of the Lançon-Billard tiling.
Section~\ref{sec:Rhomic_CASTs_with_symmetric_edges_and_sunbstitution_rules}
focuses on several cases of rhombic CASTs and their minimal inflation
multiplier. In Section~\ref{sec:Gaps_to_Prototiles_Algorithm} the
“Gaps to Prototiles” algorithm is introduced, which allows to identify
large numbers of new CASTs. Finally, examples of Girih CASTs with
$n\in\left\{ 4,5,7\right\} $ are presented in Section~\ref{sec:CASTs_with_Extended_Girih_Prototiles}.
Except the generalized Lançon-Billard tiling all CASTs in this article
yield local dihedral symmetry $D_{n}$ or $D_{2n}$. 

For terms and definitions we stay close to \citep{oro38933} and \citep{HFonl}:
\begin{itemize}
\item A ``tile'' in $\mathbb{\mathbb{\mathbb{R}}}^{d}$ is defined as
a nonempty compact subset of $\mathbb{\mathbb{\mathbb{R}}}^{d}$ which
is the closure of its interior.
\item A ``tiling'' in $\mathbb{\mathbb{\mathbb{R}}}^{d}$ is a countable
set of tiles, which is a covering as well as a packing of $\mathbb{\mathbb{\mathbb{R}}}^{d}$.
The union of all tiles is $\mathbb{\mathbb{\mathbb{R}}}^{d}$. The
intersection of the interior of two different tiles is empty.
\item A ``patch'' is a finite subset of a tiling.
\item A tiling is called ``aperiodic'' if no translation maps the tiling
to itself.
\item ``Prototiles'' serve as building blocks for a tiling.
\item Within this article the term ``substitution'' means, that a tile
is expanded with a linear map - the ``inflation multiplier'' - and
dissected into copies of prototiles in original size - the ``substitution
rule''.
\item A ``supertile'' is the result of one or more substitutions, applied
to a single tile. Within this article we use the term for one substitutions
only. 
\item We use $\zeta_{n}^{k}$ to denote the $n$-th roots of unity so that
$\zeta_{n}^{k}=e^{\frac{2\mathtt{i}k\pi}{n}}$and its complex conjugate
$\overline{\zeta_{n}^{k}}=e^{-\frac{2\mathtt{i}k\pi}{n}}$.
\item $\mathbb{Q}\left(\zeta_{n}\right)$ denotes the $n$-th cyclotomic
field. Please note that $\mathbb{Q}\left(\zeta_{n}\right)=\mathbb{Q}\left(\zeta_{2n}\right)$
for $odd\;n$.
\item The maximal real subfield of $\mathbb{Q}\left(\zeta_{n}\right)$ is
$\mathbb{Q}\left(\zeta_{n}+\overline{\zeta_{n}}\right)$.
\item $\mathbb{Z}\left[\zeta_{n}\right]$ denotes the the ring of algebraic
integers in $\mathbb{Q}\left(\zeta_{n}\right)$.
\item $\mathbb{Z}\left[\zeta_{n}+\overline{\zeta_{n}}\right]$ denotes the
the ring of algebraic integers (which are real numbers) in $\mathbb{Q}\left(\zeta_{n}+\overline{\zeta_{n}}\right)$. 
\item We use $\mu_{n,k}$ to denote the $k$-th diagonal of a regular $n$-gon
with side length $\mu_{n,1}=\mu_{n,n-1}=1$.
\item $\mathbb{Z}\left[\mu_{n}\right]=\mathbb{Z}\left[\mu_{n,1},\mu_{n,2},\mu_{n,3}...\mu_{n,\left\lfloor n/2\right\rfloor }\right]$
denotes the ring of the diagonals of a regular $n$-gon.
\end{itemize}
\clearpage{}

\section{\label{sec:Cyclotomic_Aperiodic_Substitution_Tilings}Properties
of Cyclotomic Aperiodic Substitution Tilings}

We define Cyclotomic Aperiodic Substitution Tilings (CASTs) similar
to the concept of Cyclotomic Model Sets as described in \citep[Ch. 7.3]{oro38933}.
\begin{defn}
A (substitution) tiling $\mathcal{T}$ in the complex plane is cyclotomic
if the coordinates of all vertices are algebraic integers in $\mathbb{Z}\left[\zeta_{2n}\right]$,
i.e. an integer sum of the $2n$-th roots of unity. As a result all
vertices of all substituted prototiles and the inflation multiplier
are algebraic integers as well. \end{defn}
\begin{thm}
\label{thm: CAST1} For the case that all prototiles $P_{k}$ of a
CAST $\mathcal{T}$ have areas equal to $A_{k}=A(P_{k})=R\cdot\sin\left(\frac{k\pi}{n}\right)$,
$R\in\mathbb{R}^{>0}$ , $k,n\in\mathbb{\mathbb{N}}$ , $0<k<n$ we
can use a given inflation multiplier $\eta$ to calculate the substitution
matrix if $|\eta^{2}|$ can be written as $|\eta^{2}|={\displaystyle {\textstyle \sum_{k=1}^{\left\lfloor n/2\right\rfloor }}}c_{k}\mu_{n,k}$,
$c_{k}\in\mathbb{\mathbb{N}}_{0}$, $\max\left(c_{k}\right)>0)$,
$\mu_{n,k}=\frac{\sin\left(\frac{k\pi}{n}\right)}{\sin\left(\frac{\pi}{n}\right)}$
and the conditions $|\eta^{2}|\notin\mathbb{N}$\textup{ and $\min((\max(c_{k}),\;odd\;k),(\max(c_{k}),\;even\;k))\geq1,\;(even\;n>4)$}
are met. (For simplification, all prototiles with the same area are
combined.)
\end{thm}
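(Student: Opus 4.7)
The plan is to determine the entries $M_{ij}$ of the substitution matrix (the number of copies of prototile $P_j$ contained in the substitution of $P_i$) by combining area conservation with the unique expansion of real cyclotomic integers in the $\mathbb{Q}$-basis $\{\mu_{n,j}\}_{j=1}^{\lfloor n/2\rfloor}$ of the maximal real subfield $\mathbb{Q}(\zeta_{n}+\overline{\zeta_{n}})$.

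First I would exploit area additivity. Inflation by $\eta$ scales areas by $|\eta|^{2}$, and the supertile of $P_{i}$ is exactly tiled by the prototiles appearing in the substitution, so
\[
|\eta|^{2}A_{i} \;=\; \sum_{j=1}^{\lfloor n/2\rfloor} M_{ij}\,A_{j},\qquad M_{ij}\in\mathbb{N}_{0}.
\]
Inserting $A_{k}=R\sin(k\pi/n)=R\sin(\pi/n)\,\mu_{n,k}$ and cancelling the common factor $R\sin(\pi/n)$ yields
\[
|\eta|^{2}\mu_{n,i} \;=\; \sum_{j=1}^{\lfloor n/2\rfloor} M_{ij}\,\mu_{n,j}.
\]
Because the $\mu_{n,j}$ form a $\mathbb{Q}$-basis, the rational numbers $M_{ij}$ are uniquely determined by the left-hand side; the task is therefore to compute that expansion.

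Next I would substitute the hypothesis $|\eta|^{2}=\sum_{k}c_{k}\mu_{n,k}$ and apply the Chebyshev-type product formula
\[
\mu_{n,i}\,\mu_{n,k}\;=\;\mu_{n,|i-k|+1}+\mu_{n,|i-k|+3}+\cdots+\mu_{n,i+k-1},
\]
followed, where needed, by the reflection $\mu_{n,l}=\mu_{n,n-l}$ to fold indices back into the fundamental range $1\le l\le\lfloor n/2\rfloor$. The key observation is that for $i,k\le\lfloor n/2\rfloor$ one has $i+k-1\le n-1$, so every index produced by the product identity lies in $[1,n-1]$. Hence only the sign-preserving reflection $\mu_{n,l}=\mu_{n,n-l}$ is needed; the anti-symmetry $\mu_{n,n+m}=-\mu_{n,m}$, which would introduce negative coefficients, never comes into play. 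Collecting like terms then expresses each $M_{ij}$ as a non-negative integer combination of the $c_{k}$, giving $M_{ij}\in\mathbb{N}_{0}$ and furnishing the substitution matrix explicitly.

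Finally I would explain the role of the side conditions, which do not affect the calculation itself but ensure the output is a genuine, non-degenerate substitution matrix. The condition $|\eta|^{2}\notin\mathbb{N}$ rules out $|\eta|^{2}=c_{1}\mu_{n,1}=c_{1}\in\mathbb{N}$, which would force $M=c_{1}I$ — a trivial pure rescaling. For even $n>4$ the ring $\mathbb{Z}[\mu_{n}]$ admits a parity bigrading (products of same-parity diagonals lie in the span of odd-indexed $\mu_{n,k}$, products of opposite-parity diagonals in the span of even-indexed ones), so if all $c_{k}$ of one parity vanish then either $M$ itself or $M^{2}$ decouples into two blocks and certain prototiles never appear in the substitution of others; the stated $\min$-condition is precisely what prevents this decoupling. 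The main obstacle I anticipate is the combinatorial bookkeeping in the middle step — verifying carefully that the index bound $i+k-1\le n-1$ is always achieved within the prescribed range and that the fold $\mu_{n,l}=\mu_{n,n-l}$ really suffices to land every summand in the basis without a sign change — since once this is nailed down, the identity $M_{ij}=\sum_{k}c_{k}\cdot[\text{multiplicity of }\mu_{n,j}\text{ in }\mu_{n,i}\mu_{n,k}\text{ after folding}]$ reads off the substitution matrix directly from the $c_{k}$.
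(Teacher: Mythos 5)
Your construction is essentially the paper's: you take the area vector with entries $\mu_{n,k}$ as the Perron eigenvector, expand $|\eta|^{2}\mu_{n,i}$ via the diagonal product formula, fold indices back into $[1,\lfloor n/2\rfloor]$ using $\mu_{n,n-l}=\mu_{n,l}$, and read off non-negative integer matrix entries; your index bound $i+k-1\le n-1$ and your parity bigrading for even $n$ (which is what the paper encodes in its sets $S_{odd}$, $S_{even}$, $S_{mixed}$ and the primitivity discussion) are both correct. The one flaw is your claim that $\{\mu_{n,j}\}_{j=1}^{\lfloor n/2\rfloor}$ is a $\mathbb{Q}$-basis of the maximal real subfield, from which you conclude the $M_{ij}$ are uniquely determined. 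This is only true when $\lfloor n/2\rfloor$ equals the degree $\Phi(2n)/2$ of that field (e.g.\ for prime $n$); for composite $n$ the diagonals are linearly dependent — the paper itself notes $\mu_{9,4}=\mu_{9,2}+\mu_{9,1}$, and similarly $\mu_{6,3}=2\mu_{6,1}$ — so the expansion, and hence the matrix, is not unique. This does not break your proof, because the diagonal product formula still furnishes one concrete valid choice of $M$ with entries in $\mathbb{N}_{0}$, which is all the theorem requires; but you should drop the uniqueness claim and state only that the DPF expansion yields a well-defined candidate matrix, as the paper does (and as it acknowledges explicitly in its remark on the non-uniqueness of the coefficient vector $c$).
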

The inflation multiplier $\eta$ of such a substitution tiling can
be written as a sum of $2n$-th roots of unity:

\begin{equation}
\eta=\sum_{k=0}^{n-1}a_{k}\zeta_{2n}^{k}\;\;\;\;\;\;(a_{k}\in\mathbb{Z},\;\max(|a_{k}|)>0)\label{eq:lambda-i}
\end{equation}

Please note that there are multiple ways to describe $\eta$. We assume
$a_{k}$ to be chosen so that the sum is irreducible, i.e. $\sum_{k=0}^{n-1}|a_{k}|$
is minimal. 

A substitution tiling with $l\geq2$, $l\in\mathbb{N}$ prototiles
and substitution rules is partially characterized by its substitution
matrix $M\in\mathbb{\mathbb{N}}_{0}^{l\times l}$ with an eigenvalue
$\lambda$ and an eigenvector $x_{A}$. 

\begin{equation}
\lambda\,x_{A}=M\,x_{A}
\end{equation}

The elements of the right eigenvector $x_{A}$ contain the areas of
the prototiles $A_{k}=A(P_{k})$. Since $M\in\mathbb{\mathbb{N}}_{0}^{l\times l}$,
the elements of $x_{A}$ generate a ring of algebraic integers which
are real numbers.

The elements of the left eigenvector $x_{f}$ represent the frequencies
of the prototiles $f_{k}=f(P_{k})$, so that:

\[
\lambda\,x_{f}^{T}=x_{f}^{T}\,M
\]
\begin{equation}
\lambda\,x_{f}=M^{T}\,x_{f}\label{eq:eigenvector-frequency}
\end{equation}

The eigenvalue $\lambda$ can be interpreted as inflation multiplier
regarding the areas during a substitution. In the complex plane we
can conclude:

\begin{equation}
\lambda=|\eta^{2}|=\eta\cdot\overline{\eta}\;\;\;\;\;\;(\lambda\in\mathbb{\mathbb{R}})\label{eq:lambda-a-square}
\end{equation}

With \eqref{eq:lambda-i} and \eqref{eq:lambda-a-square} the eigenvalue
$\lambda$ can also be written as a sum of $2n$-th roots of unity.
In other words, the elements of the eigenvector $x_{A}$ span a ring
of algebraic integers which are real numbers. This ring is isomorphic
to the ring of algebraic integers $\mathbb{Z}\left[\zeta_{2n}+\overline{\zeta_{2n}}\right]$
in $\mathbb{Q}\left(\zeta_{2n}+\overline{\zeta_{2n}}\right)$, which
is the maximal real subfield of the cyclotomic field $\mathbb{Q}\left(\zeta_{2n}\right)$.
\begin{equation}
\lambda=b_{0}+\sum_{k=1}^{\left\lfloor (n-1)/2\right\rfloor }b_{k}\left(\zeta_{2n}^{k}+\overline{\zeta_{2n}^{k}}\right)\;\;\;\;\;\;(b_{0},b_{k}\in\mathbb{Z})\label{eq:lambda-a-2}
\end{equation}

\begin{equation}
b_{0}=\sum_{k=0}^{n-1}a_{k}^{2}
\end{equation}

Because of the conditions regarding $a_{k}$ we ensure that no combination
of roots of unity in the right part of the Equation~\eqref{eq:lambda-a-2}
can sum up to a integer which is a real number. Since we need at least
two roots of unity to have an inflation multiplier $\eta>1$, it follows
that 

\begin{equation}
b_{0}\geq2.\label{eq:mindest-b0}
\end{equation}

The length of the $k$-th diagonal $\mu_{n,k}$ of a regular $n$-gon
with side length 1 can be written as absolute value of a sum of $2n$-th
roots of unity. Note that $\mu_{n,1}=1$ because it refers to a single
side of the regular $n$-gon.

\begin{equation}
\mu_{n,1}=\zeta_{2n}^{0}=1\label{eq:diagonal-to-sum-of-n-roots-of-unity}
\end{equation}

\[
\mu_{n,2}=\zeta_{2n}^{1}+\zeta_{2n}^{-1}=\zeta_{2n}^{1}+\overline{\zeta_{2n}^{1}}
\]

\[
\mu_{n,3}=\zeta_{2n}^{-2}+\zeta_{2n}^{0}+\zeta_{2n}^{2}=\zeta_{2n}^{0}+\zeta_{2n}^{2}+\overline{\zeta_{2n}^{2}}\;\;\;\;\;\;(n>4)
\]

\[
\mu_{n,4}=\zeta_{2n}^{3}+\zeta_{2n}^{1}+\zeta_{2n}^{-1}+\zeta_{2n}^{-3}=\zeta_{2n}^{1}+\overline{\zeta_{2n}^{1}}+\zeta_{2n}^{3}+\overline{\zeta_{2n}^{3}}\;\;\;\;\;\;(n>5)
\]

\begin{equation}
\mu_{n,k}=\sum_{i=0}^{k-1}\zeta_{2n}^{2i-k+1}\;\;\;\;\;\;(n>k\geq1)\label{eq:diagonal-to-sum-of-n-roots-of-unity-2}
\end{equation}

This also works vice versa for $\zeta_{2n}^{k}+\overline{\zeta_{2n}^{k}}$:

\begin{equation}
\zeta_{2n}^{k}+\overline{\zeta_{2n}^{k}}=\mu_{n,k+1}-\mu_{n,k-1}\;\;\;\;\;\;(\left\lfloor n/2\right\rfloor \geq k>1)\label{eq:n-roots-of-unity-to-sum-of-diagonal}
\end{equation}

As a result, the eigenvalue $\lambda$ can also be written as a sum
of diagonals $\mu_{n,k}$:

\begin{equation}
\lambda=\sum_{k=1}^{\left\lfloor n/2\right\rfloor }c_{k}\mu_{n,k}\;\;\;\;\;\;(c_{k}\in\mathbb{\mathbb{Z}};\;\max(\left|c_{k}\right|)>0)\label{eq:lambda-a-3}
\end{equation}

We recall the areas $A_{n,k}$ of isosceles triangles with vertex
angles equal $\frac{k\pi}{n}$ spanned by roots of unity:

\begin{equation}
A_{n,k}=R\;\sin\left(\frac{k\pi}{n}\right)\;\;\;\;\;\;\left(R=\frac{1}{2}\right)\label{eq:triangle-area}
\end{equation}

We furthermore recall that the length of the $k$-th diagonal $\mu_{n,k}$
of a regular $n$-gon with side length 1 can also be written as:

\begin{equation}
\mu_{n,k}=\frac{\sin\left(\frac{k\pi}{n}\right)}{\sin\left(\frac{\pi}{n}\right)}\label{eq:diagonal-of-ngon}
\end{equation}

We recall the Diagonal Product Formula (DPF) as described in \citep{Nischke1996}
and \citep{zbMATH01071804} with some small adaptions:

\[
\mu_{n,1}\mu_{n,k}=\mu_{n,k}
\]

\[
\mu_{n,2}\mu_{n,k}=\mu_{n,k-1}+\mu_{n,k+1}\;\;\;\;\;\;(1<k\leq\left\lfloor n/2\right\rfloor )
\]

\[
\mu_{n,3}\mu_{n,k}=\mu_{n,k-2}+\mu_{n,k}+\mu_{n,k+2}\;\;\;\;\;\;(2<k\leq\left\lfloor n/2\right\rfloor )
\]

\[
\mu_{n,4}\mu_{n,k}=\mu_{n,k-3}+\mu_{n,k-1}+\mu_{n,k+1}+\mu_{n,k+3}\;\;\;\;\;\;(3<k\leq\left\lfloor n/2\right\rfloor )
\]

Or, more generally:

\begin{equation}
\mu_{n,h}\mu_{n,k}=\sum_{i=1}^{h}{\displaystyle \mu_{n,k-h-1+2i}}\;\;\;\;\;\;(1\leq h\leq k\leq\left\lfloor n/2\right\rfloor )\label{eq:goldenfield}
\end{equation}

The other diagonals are defined by:

\begin{equation}
\mu_{n,n-k}=\mu_{n,k}\;\;\;\;\;\;(1\leq k\leq n-1)\label{eq:diagonal-equivalenz}
\end{equation}

and

\begin{equation}
\sin\left(\frac{\left(n-k\right)\pi}{n}\right)=\sin\left(\frac{k\pi}{n}\right).\label{eq:diagonal-equivalenz-2}
\end{equation}

As a result the diagonals of a $n$-gon span a ring of diagonals $\mathbb{Z}\left[\mu_{n}\right]$.
With Equations~\eqref{eq:diagonal-to-sum-of-n-roots-of-unity}, \eqref{eq:diagonal-to-sum-of-n-roots-of-unity-2}
and~\eqref{eq:n-roots-of-unity-to-sum-of-diagonal} can be shown
that:

\begin{equation}
\mathbb{Z}\left[\mu_{n}\right]=\mathbb{Z}\left[\zeta_{2n}+\overline{\zeta_{2n}}\right]
\end{equation}

\begin{equation}
\lambda\in\mathbb{Z}\left[\mu_{n}\right]\label{eq:lambda-a-3-1}
\end{equation}

Because of Equations~\eqref{eq:lambda-a-3}, \eqref{eq:triangle-area},
\eqref{eq:diagonal-of-ngon} and~\eqref{eq:goldenfield}, we choose
the substitution matrix as $M\in\mathbb{\mathbb{N}}_{0}^{\left(n-1\right)\times\left(n-1\right)}$
and the eigenvector as: 

\begin{equation}
x_{A}=\begin{pmatrix}\mu_{n,n-1}\\
\vdots\\
\mu_{n,2}\\
\mu_{n,1}
\end{pmatrix}=\frac{1}{A_{n,1}}\begin{pmatrix}A_{n,n-1}\\
\vdots\\
A_{n,2}\\
A_{n,1}
\end{pmatrix}\label{eq:eigenvektor}
\end{equation}

With Equation~\eqref{eq:goldenfield} and~\eqref{eq:eigenvektor},
we can find a matrix $M_{n,k},\;\left\lfloor n/2\right\rfloor \geq k\geq1$
with eigenvalue $\mu_{n,k}$ for the given eigenvector $x_{A}$:

\begin{equation}
M_{n,1}=E=\left(\begin{array}{ccccccc}
1 & 0 & \cdots & \cdots & \cdots & \cdots & 0\\
0 & 1 & 0 &  &  &  & \vdots\\
\vdots & 0 & 1 & \ddots &  &  & \vdots\\
\vdots &  & \ddots & \ddots & \ddots &  & \vdots\\
\vdots &  &  & \ddots & 1 & 0 & \vdots\\
\vdots &  &  &  & 0 & 1 & 0\\
0 & \cdots & \cdots & \cdots & \cdots & 0 & 1
\end{array}\right)\label{eq:M-allgemein}
\end{equation}

\[
M_{n,2}=\left(\begin{array}{ccccccc}
0 & 1 & 0 & \cdots & \cdots & \cdots & 0\\
1 & 0 & 1 & \text{0} &  &  & \vdots\\
0 & 1 & 0 & \ddots & \ddots &  & \vdots\\
\vdots & 0 & \ddots & \ddots & \ddots & 0 & \vdots\\
\vdots &  & \ddots & \ddots & 0 & 1 & 0\\
\vdots &  &  & \text{0} & 1 & 0 & 1\\
0 & \cdots & \cdots & \cdots & 0 & 1 & 0
\end{array}\right)
\]

\[
M_{n,3}=\left(\begin{array}{ccccccc}
0 & 0 & 1 & 0 & \cdots & \cdots & 0\\
0 & 1 & 0 & 1 & \ddots &  & \vdots\\
1 & 0 & 1 & \ddots & \ddots & \ddots & \vdots\\
0 & 1 & \ddots & \ddots & \ddots & 1 & 0\\
\vdots & \ddots & \ddots & \ddots & 1 & 0 & 1\\
\vdots &  & \ddots & 1 & 0 & 1 & 0\\
0 & \cdots & \cdots & 0 & 1 & 0 & 0
\end{array}\right)
\]

\[
\cdots
\]

To get the substitution matrix for a given eigenvalue $\lambda$ as
defined in Equation~\eqref{eq:lambda-a-3} we just need to sum up
the matrices $M_{n,k}$ with the coefficients $c_{k}$ :

\begin{equation}
\lambda\:x_{A}=\left(\sum_{k=1}^{\left\lfloor n/2\right\rfloor }c_{k}\mu_{n,k}\right)\:x_{A}=\left(\sum_{k=1}^{\left\lfloor n/2\right\rfloor }c_{k}M_{n,k}\right)\:x_{A}=M_{n}\:x_{A}
\end{equation}

\begin{equation}
M_{n}=\sum_{k=1}^{\left\lfloor n/2\right\rfloor }c_{k}M_{n,k}
\end{equation}

We recall that a substitution matrix $M_{n}$ must be primitive and
real positive. To ensure the latter property its eigenvalue $\lambda\in\mathbb{\mathbb{R}}$
must be a positive sum of elements of the eigenvector $x_{A}$. For
this reason, we have to modify Equation~\eqref{eq:lambda-a-3} accordingly:

\begin{equation}
\lambda=\sum_{k=1}^{\left\lfloor n/2\right\rfloor }c_{k}\mu_{n,k}\;\;\;\;\;\;(c_{k}\in\mathbb{\mathbb{N}}_{0};\;\max(c_{k})>0)\label{eq:lambda-a-1}
\end{equation}

With Equation~\eqref{eq:goldenfield} it can be shown that the $\lambda$
with positive coefficients $c_{k}$ in Equation~\eqref{eq:lambda-a-1}
span a commutative semiring $\mathbb{N}_{0}\left[\mu_{n}\right]$
which is a subset of $\mathbb{Z}\left[\mu_{n}\right]$:

\begin{equation}
\lambda\in\mathbb{\mathbb{\mathbb{N}}}_{0}\left[\mu_{n}\right]\subset\mathbb{\mathbb{Z}}\left[\mu_{n}\right]\label{eq:lambda-a-1-1}
\end{equation}

Since all $M_{n,k}$ are symmetric, this is also true for $M_{n}$.
In this case, left and right eigenvector of $M_{n}$ are equal so
that $x_{A}=x_{f}$ and $x_{A}$ represents the frequencies of prototiles
as well as their areas.

Because of Equations~\eqref{eq:diagonal-equivalenz} and~\eqref{eq:diagonal-equivalenz-2},
the matrices and the eigenvector can be reduced so that $M_{n};\:M_{n,k}\in\mathbb{\mathbb{N}}_{0}^{\left\lfloor n/2\right\rfloor \times\left\lfloor n/2\right\rfloor }$
and $x_{A}\in\mathbb{\mathbb{\mathbb{R}}}^{\left\lfloor n/2\right\rfloor }$.
In detail we omit the redundant elements of $x_{A}$ and rows of $M_{n}$
and replace the orphaned entries in $M_{n}$ with valid ones. This
also enforces $\left\lfloor n/2\right\rfloor \geq k\geq1$. The results
of the reduction are shown in Equations~\eqref{eq:M-odd} and~\eqref{eq:M-even}. 

For $x_{A}$ we can write: 
\begin{equation}
x_{A}=\begin{pmatrix}\mu_{n,\left\lfloor n/2\right\rfloor }\\
\vdots\\
\mu_{n,2}\\
\mu_{n,1}
\end{pmatrix}=\frac{1}{A_{n,1}}\begin{pmatrix}A_{n,\left\lfloor n/2\right\rfloor }\\
\vdots\\
A_{n,2}\\
A_{n,1}
\end{pmatrix}\label{eq:eigenvektor-1}
\end{equation}

For $odd\;n$ the substitution matrices $M_{n}$ can be described
by the following scheme:

\begin{equation}
M_{5}=c_{2}\left(\begin{array}{cc}
1 & 1\\
1 & 0
\end{array}\right)+c_{1}\left(\begin{array}{cc}
1 & 0\\
0 & 1
\end{array}\right)\label{eq:M-odd}
\end{equation}

\[
M_{7}=c_{3}\left(\begin{array}{ccc}
1 & 1 & 1\\
1 & 1 & 0\\
1 & 0 & 0
\end{array}\right)+c_{2}\left(\begin{array}{ccc}
1 & 1 & 0\\
1 & 0 & 1\\
0 & 1 & 0
\end{array}\right)+c_{1}\left(\begin{array}{ccc}
1 & 0 & 0\\
0 & 1 & 0\\
0 & 0 & 1
\end{array}\right)
\]

\[
M_{9}=c_{4}\left(\begin{array}{cccc}
1 & 1 & 1 & 1\\
1 & 1 & 1 & 0\\
1 & 1 & 0 & 0\\
1 & 0 & 0 & 0
\end{array}\right)+c_{3}\left(\begin{array}{cccc}
1 & 1 & 1 & 0\\
1 & 1 & 0 & 1\\
1 & 0 & 1 & 0\\
0 & 1 & 0 & 0
\end{array}\right)+c_{2}\left(\begin{array}{cccc}
1 & 1 & 0 & 0\\
1 & 0 & 1 & 0\\
0 & 1 & 0 & 1\\
0 & 0 & 1 & 0
\end{array}\right)+c_{1}\left(\begin{array}{cccc}
1 & 0 & 0 & 0\\
0 & 1 & 0 & 0\\
0 & 0 & 1 & 0\\
0 & 0 & 0 & 1
\end{array}\right)
\]

\[
M_{11}=c_{5}\left(\begin{array}{ccccc}
1 & 1 & 1 & 1 & 1\\
1 & 1 & 1 & 1 & 0\\
1 & 1 & 1 & 0 & 0\\
1 & 1 & 0 & 0 & 0\\
1 & 0 & 0 & 0 & 0
\end{array}\right)+c_{4}\left(\begin{array}{ccccc}
1 & 1 & 1 & 1 & 0\\
1 & 1 & 1 & 0 & 1\\
1 & 1 & 0 & 1 & 0\\
1 & 0 & 1 & 0 & 0\\
0 & 1 & 0 & 0 & 0
\end{array}\right)+c_{3}\left(\begin{array}{ccccc}
1 & 1 & 1 & 0 & 0\\
1 & 1 & 0 & 1 & 0\\
1 & 0 & 1 & 0 & 1\\
0 & 1 & 0 & 1 & 0\\
0 & 0 & 1 & 0 & 0
\end{array}\right)+
\]

\[
c_{2}\left(\begin{array}{ccccc}
1 & 1 & 0 & 0 & 0\\
1 & 0 & 1 & 0 & 0\\
0 & 1 & 0 & 1 & 0\\
0 & 0 & 1 & 0 & 1\\
0 & 0 & 0 & 1 & 0
\end{array}\right)+c_{1}\left(\begin{array}{ccccc}
1 & 0 & 0 & 0 & 0\\
0 & 1 & 0 & 0 & 0\\
0 & 0 & 1 & 0 & 0\\
0 & 0 & 0 & 1 & 0\\
0 & 0 & 0 & 0 & 1
\end{array}\right)
\]

The scheme can be continued for any $odd\;n$. The matrices are still
symmetric, so that

\begin{equation}
x_{f}=x_{A}\;\;\;\;\;\;(odd\;n)\label{eq:xf_n_odd}
\end{equation}
and $x_{A}$ still represents the frequencies of prototiles.

For $even\;n$, the substitution matrices $M_{n}$ be described by
the following scheme:

\begin{equation}
M_{4}=c_{2}\left(\begin{array}{cc}
0 & 2\\
1 & 0
\end{array}\right)+c_{1}\left(\begin{array}{cc}
1 & 0\\
0 & 1
\end{array}\right)\label{eq:M-even}
\end{equation}

\[
M_{6}=c_{3}\left(\begin{array}{ccc}
1 & 0 & 2\\
0 & 1 & 0\\
1 & 0 & 0
\end{array}\right)+c_{2}\left(\begin{array}{ccc}
0 & 2 & 0\\
1 & 0 & 1\\
0 & 1 & 0
\end{array}\right)+c_{1}\left(\begin{array}{ccc}
1 & 0 & 0\\
0 & 1 & 0\\
0 & 0 & 1
\end{array}\right)
\]

\[
M_{8}=c_{4}\left(\begin{array}{cccc}
0 & 2 & 0 & 2\\
1 & 0 & 2 & 0\\
0 & 2 & 0 & 0\\
1 & 0 & 0 & 0
\end{array}\right)+c_{3}\left(\begin{array}{cccc}
1 & 0 & 2 & 0\\
0 & 2 & 0 & 1\\
1 & 0 & 1 & 0\\
0 & 1 & 0 & 0
\end{array}\right)+c_{2}\left(\begin{array}{cccc}
0 & 2 & 0 & 0\\
1 & 0 & 1 & 0\\
0 & 1 & 0 & 1\\
0 & 0 & 1 & 0
\end{array}\right)+c_{1}\left(\begin{array}{cccc}
1 & 0 & 0 & 0\\
0 & 1 & 0 & 0\\
0 & 0 & 1 & 0\\
0 & 0 & 0 & 1
\end{array}\right)
\]

\[
M_{10}=c_{5}\left(\begin{array}{ccccc}
1 & 0 & 2 & 0 & 2\\
0 & 2 & 0 & 2 & 0\\
1 & 0 & 2 & 0 & 0\\
0 & 2 & 0 & 0 & 0\\
1 & 0 & 0 & 0 & 0
\end{array}\right)+c_{4}\left(\begin{array}{ccccc}
0 & 2 & 0 & 2 & 0\\
1 & 0 & 2 & 0 & 1\\
0 & 2 & 0 & 1 & 0\\
1 & 0 & 1 & 0 & 0\\
0 & 1 & 0 & 0 & 0
\end{array}\right)+c_{3}\left(\begin{array}{ccccc}
1 & 0 & 2 & 0 & 0\\
0 & 2 & 0 & 1 & 0\\
1 & 0 & 1 & 0 & 1\\
0 & 1 & 0 & 1 & 0\\
0 & 0 & 1 & 0 & 0
\end{array}\right)+
\]

\[
c_{2}\left(\begin{array}{ccccc}
0 & 2 & 0 & 0 & 0\\
1 & 0 & 1 & 0 & 0\\
0 & 1 & 0 & 1 & 0\\
0 & 0 & 1 & 0 & 1\\
0 & 0 & 0 & 1 & 0
\end{array}\right)+c_{1}\left(\begin{array}{ccccc}
1 & 0 & 0 & 0 & 0\\
0 & 1 & 0 & 0 & 0\\
0 & 0 & 1 & 0 & 0\\
0 & 0 & 0 & 1 & 0\\
0 & 0 & 0 & 0 & 1
\end{array}\right)
\]

The scheme can be continued for any $even\;n$, but the matrices are
not symmetric anymore. With Equations~\eqref{eq:eigenvector-frequency},
\eqref{eq:M-allgemein} and~\eqref{eq:M-even}, we can derive: 

\begin{equation}
x_{f}=\begin{pmatrix}\mu_{n,n-1}\\
2\mu_{n,n-2}\\
\vdots\\
2\mu_{n,2}\\
2\mu_{n,1}
\end{pmatrix}=\frac{1}{A_{n,1}}\begin{pmatrix}A_{n,n-1}\\
2A_{n,n-2}\\
\vdots\\
2A_{n,2}\\
2A_{n,1}
\end{pmatrix}\;\;\;\;\;\;(even\;n)\label{eq:xf_n_even}
\end{equation}

\begin{rem}
For a given matrix $M_{n}$ of a CAST $\mathcal{T}$ as defined above
the eigenvalue $\lambda$ can be calculated easily with Equation~\eqref{eq:lambda-a-1}.
From Equations~\eqref{eq:M-odd} and~\eqref{eq:M-even} follows
that: 

\begin{equation}
M_{n}=\left[\begin{array}{cccc}
\vdots &  & \vdots & \vdots\\
c_{\left\lfloor n/2\right\rfloor } & \cdots & c_{2} & c_{1}
\end{array}\right]
\end{equation}

In other words, the coefficients $c_{k}$ can be read directly from
the bottom line of matrix $M_{n}$.

We also can find a vector notation for Equation~\eqref{eq:lambda-a-1}: 

\begin{equation}
\lambda=c^{T}x_{A}\label{eq:lambda-a-1-vector}
\end{equation}

with

\begin{equation}
c=\left(\begin{array}{c}
c_{\left\lfloor n/2\right\rfloor }\\
\vdots\\
c_{2}\\
c_{1}
\end{array}\right)\;\;\;\;\;\;(c_{k}\in\mathbb{\mathbb{N}}_{0};\;\max(c_{k})>0)\label{eq:coefficient_vector}
\end{equation}

\end{rem}
For $n\in\left\{ 2,3\right\} $ the $M_{n}=c_{1}M_{n,1}\in\mathbb{\mathbb{N}};\:\lambda\in\mathbb{N}$
and $x_{A};\:x_{F}\in\mathbb{\mathbb{\mathbb{R}}}^{>0}$. As a result
$M_{n}$ is always primitive.

For the following discussion of cases with $n\geq4$ we define some
sets of eigenvalues (and its powers) based on Equation~\eqref{eq:lambda-a-1}:

\begin{equation}
S_{even}=\left\{ {\displaystyle \sum_{k=1}^{\left\lfloor n/2\right\rfloor }}c_{k}\mu_{n,k}\mid c_{k}\in\mathbb{\mathbb{N}}_{0};\;c_{k}=0\;(odd\;k);\;\max(c_{k})>0\right\} 
\end{equation}

\begin{equation}
S_{odd}=\left\{ {\displaystyle \sum_{k=1}^{\left\lfloor n/2\right\rfloor }}c_{k}\mu_{n,k}\mid c_{k}\in\mathbb{\mathbb{N}}_{0};\;c_{k}=0\;(even\;k);\;\max(c_{k})>0\right\} 
\end{equation}

\begin{equation}
S_{mixed}=\overline{S_{odd}\cup S_{even}}\setminus\left\{ 0\right\} 
\end{equation}

\begin{equation}
S_{full}=\left\{ {\displaystyle \sum_{k=1}^{\left\lfloor n/2\right\rfloor }}c_{k}\mu_{n,k}\mid c_{k}\in\mathbb{N};\;c_{k}>\text{0}\right\} 
\end{equation}

\begin{equation}
S_{full}\subset S_{mixed}
\end{equation}

\begin{equation}
S_{even}\cap S_{odd}=S_{odd}\cap S_{mixed}=S_{mixed}\cap S_{even}=\emptyset
\end{equation}

Equations~\eqref{eq:M-odd} and~\eqref{eq:M-even} show that substitution
matrix $M_{n}$ is strictly positive $\left(M_{n}\gg0\right)$ and
so also primitive if the corresponding eigenvalue $\lambda\in S_{full}$.
Substitution matrix $M_{n}$ is also primitive if some finite power
of eigenvalue $\lambda^{p}\in S_{full}\;(p\in\mathbb{N})$.

For $odd\;n\geq5$ Equations~\eqref{eq:goldenfield} and~\eqref{eq:diagonal-equivalenz}
show that some finite power of diagonals of a regular $n$-gon exist
so that: 

\begin{equation}
\exists p\left(\mu_{n,k}^{p}\in S_{full}\right)\;\;\;\;\;\;\left(k\geq2;\right)
\end{equation}

It is also possible to show that some finite power of an eigenvalue
$\lambda\in\mathbb{\mathbb{\mathbb{N}}}_{0}\left[\mu_{n}\right]\setminus\mathbb{N}_{\text{0}}$
exist so that:

\begin{equation}
\exists p\left(\lambda^{p}\in S_{full}\right)\;\;\;\;\;\;\left(\lambda\in\mathbb{\mathbb{\mathbb{N}}}_{0}\left[\mu_{n}\right]\setminus\mathbb{N}_{\text{0}}\right)
\end{equation}

As a result any substitution matrix $M_{n}$ in Equation~\eqref{eq:M-odd}
with an eigenvalue $\lambda\in\mathbb{\mathbb{\mathbb{N}}}_{0}\left[\mu_{n}\right]\setminus\mathbb{N}_{\text{0}}$
is primitive.

For $even\;n\geq4$ Equations~\eqref{eq:goldenfield} and~\eqref{eq:diagonal-equivalenz}
show that the powers of diagonals of a regular $n$-gon have the following
porperties:

\begin{equation}
\mu_{n,k}^{p}\in S_{odd}\;\;\;\;\;\;\left(odd\;k\right)
\end{equation}

\begin{equation}
\mu_{n,k}^{p}\in S_{odd}\;\;\;\;\;\;\left(even\;k;\;even\;p\right)
\end{equation}

\begin{equation}
\mu_{n,k}^{p}\in S_{even}\;\;\;\;\;\;\left(even\;k;\;odd\;p\right)
\end{equation}

\begin{equation}
\left(\mu_{n,k}+\mu_{n,h}\right)^{p}\in S_{mixed}\;\;\;\;\;\;\left(even\;k;\;odd\;h\right)
\end{equation}

\begin{equation}
\exists p\left(\left(\mu_{n,k}+\mu_{n,h}\right)^{p}\in S_{full}\right)\;\;\;\;\;\;\left(even\;k;\;odd\;h\right)
\end{equation}

With the same equations it is also possible to show that:

\begin{equation}
\lambda^{p}\in S_{odd}\;\;\;\;\;\;\left(\lambda\in S_{odd}\right)
\end{equation}

\begin{equation}
\lambda^{p}\in S_{odd}\;\;\;\;\;\;\left(\lambda\in S_{even};\;even\;p\right)
\end{equation}

\begin{equation}
\lambda^{p}\in S_{even}\;\;\;\;\;\;\left(\lambda\in S_{even};\;odd\;p\right)
\end{equation}

\begin{equation}
\lambda^{p}\in S_{mixed}\;\;\;\;\;\;\left(\lambda\in S_{mixed}\setminus\mathbb{N}_{\text{0}}\right)
\end{equation}

It is also possible to show that some finite power of an eigenvalue
$\lambda\in S_{mixed}\setminus\mathbb{N}_{\text{0}}$ exist so that:

\begin{equation}
\exists p\left(\lambda^{p}\in S_{full}\right)\;\;\;\;\;\;\left(\lambda\in S_{mixed}\setminus\mathbb{N}_{\text{0}}\right)
\end{equation}

As a result any substitution matrix $M_{n}$ in Equation~\eqref{eq:M-even}
with an eigenvalue $\lambda\in S_{mixed}\setminus\mathbb{\mathbb{N}_{\text{0}}}$
is primitive. That follows that the coefficients $c_{k}$ have to
be chosen so that:

\begin{equation}
\min\left(\left(\max\left(c_{k}\right),\;odd\;k\right),\left(\max\left(c_{k}\right),\;even\;k\right)\right)\geq1\;\;\;\;\;\;(even\;n\geq4)\label{Bedingung_f=0000FCr_Mn_primitiv_falls_n_gerade}
\end{equation}

\begin{rem}
Every substitution tiling defines a substitution matrix. But not for
every substitution matrix $M_{n}$ exists a substitution tiling. 
\end{rem}

\begin{thm}
\label{thm:CASTs-Aperiodic}CASTs $\mathcal{T}$ in Theorem~\ref{thm: CAST1}
with $n\geq4$ are aperiodic under the condition that at least one
substituion rule contains a pair of copies of a prototile whose rotational
orientations differs in $\frac{k\pi}{n};\:n>k>0;\:k\in\mathbb{\mathbb{N}}$.\end{thm}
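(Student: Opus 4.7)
The plan is to argue by contradiction: assume $\mathcal{T}$ admits a non-zero translation period $v$, and derive a contradiction with the condition $|\eta^{2}|\notin\mathbb{N}$ from Theorem~\ref{thm: CAST1}. The heart of the argument is the observation that the substitution $\sigma$ is translation-equivariant with scaling factor $\eta$: $\sigma(\mathcal{T}+w) = \sigma(\mathcal{T}) + \eta w$ for every $w\in\mathbb{C}$. Consequently, for any tiling fixed by $\sigma$, the set of periods $G \subseteq \mathbb{C}$ is closed under multiplication by $\eta$; that is, $\eta G \subseteq G$. Since $M_{n}$ is primitive (Theorem~\ref{thm: CAST1}), the periodic case should reduce to that of a substitution fixed point in the hull of $\mathcal{T}$; this reduction is where I expect the rotational hypothesis to play its essential role (see below).

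I would then analyse the inclusion $\eta G \subseteq G$ in two cases. If $\eta \notin \mathbb{R}$, the vectors $v$ and $\eta v$ are $\mathbb{R}$-linearly independent, and the discreteness of $G$ (forced by a positive lower bound on distances between vertices) promotes it to a rank-$2$ sublattice $\Gamma$. Multiplication by $\eta$ is then an $\mathbb{R}$-linear map on $\mathbb{C}\cong\mathbb{R}^{2}$ with determinant $|\eta|^{2}$; its preservation of $\Gamma$ means it is represented in any $\mathbb{Z}$-basis of $\Gamma$ by a $2\times 2$ integer matrix, so $|\eta|^{2}\in\mathbb{Z}$, contradicting $|\eta^{2}|\notin\mathbb{N}$. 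If instead $\eta\in\mathbb{R}$, then $\eta v$ is parallel to $v$. Either $G$ is rank~$1$, hence cyclic with $G=\mathbb{Z} v_{0}$ for some $v_{0}$, and $\eta v_{0}\in G$ forces $\eta\in\mathbb{Z}$; or $G$ is rank~$2$ and the real scalar $\eta$ preserving the lattice again forces $\eta\in\mathbb{Z}$. In both subcases $\eta^{2}\in\mathbb{N}$, the desired contradiction.

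The rotational hypothesis, that some substitution rule places two copies of a prototile at orientations differing by $k\pi/n$, is what guarantees that the substitution acts genuinely two-dimensionally on tilings in the hull, and in particular validates the reduction to a fixed point of $\sigma$ itself (rather than only of some iterate $\sigma^{p}$, for which $|\eta^{2}|\notin\mathbb{N}$ would not automatically yield $|\eta|^{2p}\notin\mathbb{N}$). Without such a rotation one can envision degenerate ``stripe''-type substitutions that trivially admit periods not transported by $\eta$, rendering the equivariance identity vacuous for the purposes of the lattice argument. The main obstacle I anticipate is exactly this reduction step: carefully combining primitivity of $M_{n}$ with the rotational hypothesis, presumably via a recognizability or compactness argument that uses the two copies of the prototile at distinct orientations to preclude spurious translation invariance in the hull, before the clean lattice-theoretic contradiction above can be invoked.
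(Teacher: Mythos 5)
Your route is genuinely different from the paper's, and as written it contains a real gap --- precisely the step you flag yourself and then leave open. The paper does not argue via period lattices at all: it uses primitivity of $M_{n}$ together with the rotational hypothesis to invoke \citep[Theorem 2.3]{2008PMag...88.2033F}, concluding that all orientations of a prototile occur with equal frequency, and then observes that the frequency ratio $f(P_{2})/f(P_{1})$, read off from the left eigenvector $x_{f}$, is irrational; since a periodic tiling has rational relative tile frequencies (integer counts per fundamental domain), no periodic configuration of $P_{1}$ and $P_{2}$ is possible. In particular, the rotational hypothesis is consumed entirely by the orientation--equidistribution step. Your guess that it ``validates the reduction to a fixed point of $\sigma$ itself'' does not describe its role, and I do not see how it could serve that purpose: whether the hull contains a $\sigma$-fixed point rather than only a $\sigma^{p}$-fixed point has nothing to do with two copies of a prototile occurring in distinct orientations within one supertile.

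The lattice argument itself (if $\eta G\subseteq G$ for a discrete period group $G$ of rank $2$, resp.\ rank $1$, then $|\eta|^{2}\in\mathbb{Z}$, resp.\ $\eta\in\mathbb{Z}$) is sound, but you only establish $\eta G\subseteq G$ for a literal fixed point of $\sigma$, and a CAST need not be one; the reduction is deferred and never supplied, so the proposal is an outline rather than a proof. The gap is closable, but by a different mechanism than the one you propose: (i) for a primitive FLC substitution the period group is an invariant of the local indistinguishability class, so a period of $\mathcal{T}$ is a period of every tiling in the hull, in particular of a fixed point of $\sigma^{p}$, which exists for some $p$ by primitivity; (ii) this yields only $\eta^{p}G\subseteq G$ and hence $\lambda^{p}=|\eta|^{2p}\in\mathbb{N}$, which --- as you correctly worry --- does not formally contradict $\lambda\notin\mathbb{N}$ for a general real algebraic integer ($\sqrt{2}\notin\mathbb{N}$ but $(\sqrt{2})^{2}\in\mathbb{N}$); (iii) the contradiction is restored by Perron--Frobenius: $\lambda$ is the eigenvalue of the primitive integer matrix $M_{n}$ belonging to the strictly positive eigenvector $x_{A}$, so every algebraic conjugate $\lambda'\neq\lambda$ is also an eigenvalue of $M_{n}$ and satisfies $|\lambda'|<\lambda$, whereas $\lambda^{p}\in\mathbb{N}$ forces $|\lambda'|=(\lambda^{p})^{1/p}=\lambda$; hence $\lambda\in\mathbb{Q}$, so $\lambda\in\mathbb{N}$, contradicting the hypothesis $|\eta^{2}|\notin\mathbb{N}$ of Theorem~\ref{thm: CAST1}. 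Until steps (i)--(iii), or an equivalent, are written out, and the role of the rotational hypothesis is either justified or honestly dropped from your argument, the proof is incomplete.
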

\begin{proof}
We recall that any substitution matrix $M_{n}$ of CASTs $\mathcal{T}$
in Theorem~\ref{thm: CAST1} is primitive and that every CAST $\mathcal{T}$
yields finite rotations.

Under this conditions \citep[Theorem 2.3]{2008PMag...88.2033F} applies,
so that all orientations of a prototile appear in the same frequency. 

We furthermore recall areas and frequencies of prototiles of CASTs
$\mathcal{T}$ in Theorem~\ref{thm: CAST1} are given by right and
left eigenvector $x_{A}$ and $x_{f}$.

With Equations~\eqref{eq:xf_n_odd} and~\eqref{eq:xf_n_even} the
ratio between the frequencies of prototiles $P_{2}$ and $P_{1}$
is given by: 

\begin{equation}
\frac{f(P_{2})}{f(P_{1})}=\frac{\text{\ensuremath{\mu}}_{n,2}}{\text{2\ensuremath{\mu}}_{n,1}}=\frac{\sin\left(\frac{2\pi}{n}\right)}{2}\notin\mathbb{\mathbb{Q}}\;\;\;\;\;\;(n=4)
\end{equation}
\begin{equation}
\frac{f(P_{2})}{f(P_{1})}=\frac{\text{\ensuremath{\mu}}_{n,2}}{\text{\ensuremath{\mu}}_{n,1}}=\sin\left(\frac{2\pi}{n}\right)\notin\mathbb{\mathbb{Q}}\;\;\;\;\;\;(n\geq4)
\end{equation}

Since $\frac{f(P_{2})}{f(P_{1})}\notin\mathbb{\mathbb{Q}}$ and all
orientations of a prototile appear in the same frequency, no periodic
configuration of prototiles $P_{1}$ and $P_{2}$ is possible. 

As a result the CASTs in Theorem~\ref{thm: CAST1} with $n\geq4$
are aperiodic. \end{proof}
\begin{rem}
The inflation multiplier makes no statement whether a CAST meets the
condition in Theorem~\ref{thm:CASTs-Aperiodic} or not. As a consequence
possible solutions have to be checked case by case. However, all examples
within this article fulfill the condition in Theorem~\ref{thm:CASTs-Aperiodic}.
\end{rem}

\begin{rem}
All prototiles of a CASTs $\mathcal{T}$ in Theorem~\ref{thm: CAST1}
with $n\in\left\{ 2,3\right\} $ have identical areas. For these CASTs
aperiodicity has to be proven with other methods.
\end{rem}

\begin{thm}
\label{thm:Minimal_Inflation_Multiplier}The smallest possible inflation
multipliers for CASTs $\mathcal{T}$ in Theorem~\ref{thm: CAST1}
with $n\geq4$ are given by $\left|\eta_{min}\right|=\left|\zeta_{2n}^{1}+\overline{\zeta_{2n}^{1}}\right|=\mu_{n,2},\;odd\;n$
and $\left|\eta_{min}\right|=\left|1+\zeta_{2n}^{1}\right|=\sqrt{\mu_{n,2}+2},\;even\;n$.\end{thm}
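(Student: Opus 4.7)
The plan is to minimize $|\eta|^2$ over all $\eta\in\mathbb{Z}[\zeta_{2n}]$ satisfying the hypotheses of Theorem~\ref{thm: CAST1}: $|\eta|^2\notin\mathbb{N}$, $|\eta|^2\in\mathbb{N}_0[\mu_n]$, and for even $n>4$ the parity condition~\eqref{Bedingung_f=0000FCr_Mn_primitiv_falls_n_gerade}. Writing $\eta=\sum_{k=0}^{n-1}a_k\zeta_{2n}^k$ as in~\eqref{eq:lambda-i} with $\sum_k|a_k|$ already minimal, two symmetries simplify the enumeration: the rotation $\eta\mapsto\zeta_{2n}^j\eta$ preserves $|\eta|$, and $-\zeta_{2n}^k=\zeta_{2n}^{k+n}$ absorbs signs into exponents. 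I would then sort candidates by the weight $\sum_k|a_k|$ and examine them in increasing order.

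Weight $1$ gives a root of unity with $|\eta|=1$, not an inflation. Weight $2$ splits into $\eta=\pm 2\zeta_{2n}^j$, for which $|\eta|^2=4\in\mathbb{N}$ and is excluded, or, after rotation and sign absorption, $\eta=1+\zeta_{2n}^m$ with $1\le m\le n-1$. Expanding $|\eta|^2=2+\zeta_{2n}^m+\zeta_{2n}^{-m}$ via~\eqref{eq:n-roots-of-unity-to-sum-of-diagonal} and the DPF~\eqref{eq:goldenfield} produces $2\mu_{n,1}+\mu_{n,2}$ for $m=1$, $\mu_{n,1}+\mu_{n,3}=\mu_{n,2}^2$ for $m=2$, and $2\mu_{n,1}-\mu_{n,m-1}+\mu_{n,m+1}$ for $m\ge 3$. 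The negative coefficient of $\mu_{n,m-1}$ in the last form prevents $\lambda$ from lying in $\mathbb{N}_0[\mu_n]$, except in a finite list of sub-cases where the folding $\mu_{n,n-k}=\mu_{n,k}$ collapses the offending term; these are dispatched case by case, always yielding $\lambda\in\mathbb{N}$ or $|\eta|\le 1$. So for odd $n\ge 5$ only $m=1,2$ are admissible, and $m=2$ is strictly smaller (since $\mu_{n,3}<1+\mu_{n,2}$), giving $|\eta|^2=\mu_{n,2}^2$ realized by $\eta=\zeta_{2n}^1+\overline{\zeta_{2n}^1}$. For even $n$, $m=2$ is eliminated because $\mu_{n,3}\in\mathbb{N}$ when $n\in\{4,6\}$ (so $\lambda\in\mathbb{N}$) and $c_2=0$ violates~\eqref{Bedingung_f=0000FCr_Mn_primitiv_falls_n_gerade} when $n\ge 8$, leaving $m=1$ optimal with $|\eta|^2=2+\mu_{n,2}=|1+\zeta_{2n}^1|^2$, matching both branches of the claim.

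The main obstacle is ruling out weights $\sum_k|a_k|\ge 3$, which is not automatic: for $n=5$, $\eta=1+\zeta_{10}^3+\zeta_{10}^4$ satisfies $|\eta|^2=4-\mu_{5,2}<\mu_{5,2}^2$, and survives only because its unique representation $4\mu_{5,1}-\mu_{5,2}$ in $\mathbb{Z}[\mu_5]$ carries a negative coefficient and so fails to lie in $\mathbb{N}_0[\mu_n]$. To formalize the general exclusion, I would expand $|\eta|^2=b_0+\sum_{k\ge 1}b_k(\zeta_{2n}^k+\zeta_{2n}^{-k})$ with $b_0=\sum_j a_j^2\ge 3$ and $b_k=\sum_j a_j a_{j+k}$, so that the $\mu$-coefficients are $c_k=b_{k-1}-b_{k+1}$; requiring all $c_k\ge 0$, some $c_k\ge 1$ for $k\ge 2$, and the automatic total positivity of $\lambda=|\eta|^2$, would then push $\lambda$ above the weight-$2$ minimum. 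One compact device is to compare the Galois trace $\mathrm{tr}(\lambda)=\sum_\sigma|\sigma(\eta)|^2$, bounded below in terms of $b_0\ge 3$, with the fixed small trace of the claimed minimum. This total-positivity and semiring bookkeeping is the substantive step of the proof; the weight-$2$ enumeration above it is mechanical once it is in place.
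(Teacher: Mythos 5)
Your weight-ordered enumeration of $\eta$ is organized differently from the paper's proof, and the weight-$\le 2$ portion (including the careful elimination of $m=2$ for even $n$ via the parity condition \eqref{Bedingung_f=0000FCr_Mn_primitiv_falls_n_gerade}) is sound. But there is a genuine gap exactly where you locate the ``substantive step'': the exclusion of all $\eta$ with $\sum_k|a_k|\ge 3$. The device you propose to close it --- a lower bound on the Galois trace $\operatorname{tr}(\lambda)$ in terms of $b_0\ge 3$ --- cannot work on its own, because the trace sums over all real embeddings and says nothing about the value of $\lambda$ at the identity embedding. Your own example defeats it: for $n=5$ the candidate $\lambda=4-\mu_{5,2}$ has conjugates $4-\mu_{5,2}\approx 2.38$ and $3+\mu_{5,2}\approx 4.62$, hence trace $7$, against trace $3$ for $\lambda_{min}=1+\mu_{5,2}$, and yet $\lambda<\lambda_{min}$. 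So a large trace does not ``push $\lambda$ above the weight-$2$ minimum''; the only thing that kills such candidates is the positivity requirement $c_k\ge 0$ in \eqref{eq:lambda-a-1}, and the bookkeeping that relates $b_0=\sum_j a_j^2\ge 3$ to the \emph{folded} coefficients $c_k$ (the identities $\mu_{n,n-k}=\mu_{n,k}$ can cancel or merge terms, as your $n=5$ example shows) is precisely what your sketch does not supply. As written, the argument establishes minimality only within the weight-$2$ class.

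The paper sidesteps the infinite enumeration over $\eta$ entirely by enumerating on the other side: since every $\mu_{n,k}\ge 1$, only finitely many nonnegative coefficient vectors $c$ can yield $\lambda=\sum_k c_k\mu_{n,k}<\lambda_{min}$, and the proof lists these candidates explicitly (using $\mu_{n,k+1}>\mu_{n,k}$ and $\mu_{n,k}+\mu_{n,l}>\mu_{n,k+1}+\mu_{n,l-1}$ to prune the list, with special cases such as $\mu_{n,5}$ for small $n$ arising from the folding) and checks each against $b_0\ge 2$ from \eqref{eq:mindest-b0}, $\lambda\notin\mathbb{N}$, and the parity condition for even $n$. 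To salvage your route you would need to prove the statement you currently only assert: if $\lambda=|\eta|^2$ lies in $\mathbb{N}_0[\mu_n]\setminus\mathbb{N}$, satisfies the parity condition, and $b_0\ge 3$, then $\lambda\ge\lambda_{min}$ --- which, once unwound through the folding relations, is essentially the paper's finite check in disguise.
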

\begin{proof}
With Equation~\eqref{eq:lambda-i} we can describe the minimal eigenvalues
for the smallest possible inflation multipliers: 
\begin{equation}
\lambda_{min}=\eta_{min}\cdot\overline{\eta_{min}}=\left|\eta_{min}\right|^{2}=\mu_{n,3}+1\;\;\;\;\;\;(odd\;n)\label{eq:lambda_min_odd_n}
\end{equation}
\begin{equation}
\lambda_{min}=\eta_{min}\cdot\overline{\eta_{min}}=\left|\eta_{min}\right|^{2}=\mu_{n,2}+2\;\;\;\;\;\;(even\;n)\label{eq:lambda_min_even_n}
\end{equation}
From Equation~\eqref{eq:lambda-i} we can also derive:
\begin{equation}
\left|\eta_{1}\right|>\left|\eta_{2}\right|\implies\lambda_{1}>\lambda_{2}
\end{equation}
In other words, to prove Theorem~\ref{thm:Minimal_Inflation_Multiplier},
it is sufficient to show that for a substitution matrix of a CAST
no smaller eigenvalue $\lambda$ exist. We recall that such an eigenvalue
$\lambda$ has to fulfill Equations~\eqref{eq:mindest-b0}, \eqref{eq:lambda-a-1}
and~\eqref{Bedingung_f=0000FCr_Mn_primitiv_falls_n_gerade}.

With the following inequalities 

\begin{equation}
k>\mu_{n,k}\;\;\;\;\;\;(\left\lfloor n/2\right\rfloor \geq k>1)
\end{equation}
\begin{equation}
\mu_{n,k+1}>\mu_{n,k}\;\;\;\;\;\;(\left\lfloor n/2\right\rfloor \geq k+1>k\geq1)
\end{equation}

\begin{equation}
\mu_{n,k}+\mu_{n,l}>\mu_{n,k+1}+\mu_{n,l-1}\;\;\;\;\;\;(k\geq l,\;\left\lfloor n/2\right\rfloor \geq k+1>k\geq1,\;\left\lfloor n/2\right\rfloor \geq l>l-1\geq1)
\end{equation}

we can identify all eigenvalues which are smaller than $\lambda_{min}$.
As noted in Equation~\eqref{eq:lambda-a-1} all $c_{k}\geq0$, therefore
every eigenvalue $\lambda$ must be a sum of diagonals $\mu_{n,k}$. 

For $odd\;n$ we identified the following eigenvalues: 

\begin{equation}
\lambda_{min}>\mu_{n,5}=1+\zeta_{2n}^{2}+\overline{\zeta_{2n}^{2}}+\zeta_{2n}^{4}+\overline{\zeta_{2n}^{4}}\label{eq:odd-cases-start}
\end{equation}
\begin{equation}
\lambda_{min}>\mu_{n,4}=\zeta_{2n}^{1}+\overline{\zeta_{2n}^{1}}+\zeta_{2n}^{3}+\overline{\zeta_{2n}^{3}}\;\;\;\;\;\;(n=11)
\end{equation}

\begin{equation}
\lambda_{min}>\mu_{n,3}=1+\zeta_{2n}^{2}+\overline{\zeta_{2n}^{2}}
\end{equation}

\begin{equation}
\lambda_{min}>\mu_{n,2}=\zeta_{2n}^{1}+\overline{\zeta_{2n}^{1}}
\end{equation}

\begin{equation}
\lambda_{min}>1+\mu_{n,2}=1+\zeta_{2n}^{1}+\overline{\zeta_{2n}^{1}}\;\;\;\;\;\;(n>5)
\end{equation}

\begin{equation}
\lambda_{min}>2
\end{equation}

\begin{equation}
\lambda_{min}>1\label{eq:odd-cases-end}
\end{equation}

For $even\;n$ we identified the following eigenvalues:

\begin{equation}
\lambda_{min}>\mu_{n,6}\;\;\;\;\;\;(n=12)\label{eq:lambda-min-n-even-check-1}
\end{equation}
\begin{equation}
\lambda_{min}>\mu_{n,5}\;\;\;\;\;\;(n\in\{10,12\})\label{eq:lambda-min-n-even-check-2}
\end{equation}
\begin{equation}
\lambda_{min}>\mu_{n,4},\;\mu_{n,3},\;\mu_{n,2},\;\mu_{n,3}+1,\;3,\;2,\;1\label{eq:lambda-min-n-even-check-3}
\end{equation}

\begin{equation}
\lambda_{min}>1+\mu_{n,2}=1+\zeta_{2n}^{1}+\overline{\zeta_{2n}^{1}}\label{eq:lambda-min-n-even-check-4}
\end{equation}

\begin{equation}
\lambda_{min}>1+\mu_{n,2}=1+\zeta_{2n}^{1}+\overline{\zeta_{2n}^{1}}+\zeta_{2n}^{3}+\overline{\zeta_{2n}^{3}}\;\;\;\;\;\;(n=8)\label{eq:lambda-min-n-even-check-5}
\end{equation}

None of the eigenvalues in Equations~\eqref{eq:odd-cases-start}~-~\eqref{eq:lambda-min-n-even-check-5}
fulfills all conditions above, which completes the proof.\end{proof}
\begin{thm}
\label{thm:CAST2}The definition of CASTs with $n\geq4$ can be extended
to all CASTs with other prototiles than in Theorem~\ref{thm: CAST1}.\end{thm}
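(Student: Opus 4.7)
The plan is to reduce the general case to Theorem~\ref{thm: CAST1} by dissecting arbitrary prototiles into the standard isosceles triangles $P_k$ with areas $A_{n,k}=\frac{1}{2}\sin(k\pi/n)$. Given a CAST $\mathcal{T}'$ with prototiles $Q_1,\ldots,Q_m$ whose vertices lie in $\mathbb{Z}[\zeta_{2n}]$, I want to exhibit an auxiliary ``refined'' CAST $\mathcal{T}$ satisfying the hypotheses of Theorem~\ref{thm: CAST1} and argue that all the structural conclusions (inflation multiplier in the cyclotomic field, substitution matrix over $\mathbb{Z}[\mu_n]$, primitivity and aperiodicity conditions) carry back to $\mathcal{T}'$.

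First I would show that any prototile $Q$ with vertices in $\mathbb{Z}[\zeta_{2n}]$ admits a dissection into basic triangles $P_k$, $1\le k\le\lfloor n/2\rfloor$. After an initial triangulation of $Q$ with diagonals whose endpoints lie in $\mathbb{Z}[\zeta_{2n}]$, each resulting triangle has edge vectors in $\mathbb{Z}[\zeta_{2n}]$ and can be further split into isosceles triangles with vertex angles $k\pi/n$ by inserting additional $\mathbb{Z}[\zeta_{2n}]$-aligned cuts. In particular, the area of $Q$ takes the form $\sum_k n_k A_{n,k}$ with $n_k\in\mathbb{N}_0$, so (up to the common factor $A_{n,1}$) it sits in the semiring $\mathbb{N}_0[\mu_n]$ that already governs Theorem~\ref{thm: CAST1}.

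Next I would replace every occurrence of each $Q_j$ in $\mathcal{T}'$ by its chosen dissection, obtaining the refined tiling $\mathcal{T}$ whose prototiles are the $P_k$. The inflation multiplier $\eta$ is unchanged, because the bookkeeping refinement does not alter the linear map, and the substitution rule on each $P_k$ is induced canonically: substituting a dissected $Q_j$ and then dissecting the resulting copies of $Q_i$ produces a well-defined expansion of each $P_k$ into copies of the $P_k$'s. Theorem~\ref{thm: CAST1} then applies to $\mathcal{T}$, yielding the substitution matrix in the form \eqref{eq:M-odd}/\eqref{eq:M-even}, the representation $|\eta^2|=\sum c_k\mu_{n,k}$, and the primitivity condition. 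Since these conclusions are statements about $\eta$ and about the ring structure on the areas (rather than about the specific tile shapes), they transfer verbatim to $\mathcal{T}'$; Theorem~\ref{thm:CASTs-Aperiodic} and Theorem~\ref{thm:Minimal_Inflation_Multiplier} transfer analogously.

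\textbf{Main obstacle.} The delicate point is establishing that a single canonical dissection can be chosen which is \emph{compatible} with the substitution: the image of a dissected $Q_j$ under one inflation-and-substitution step must again decompose into the prescribed dissections of the offspring $Q_i$'s, in a manner consistent across all rotated and reflected copies that occur in the tiling. Compatibility ultimately rests on the fact that the substitution preserves the vertex module $\mathbb{Z}[\zeta_{2n}]$, so every interior cut introduced at stage $k$ remains aligned with the $\mathbb{Z}[\zeta_{2n}]$-directions at stage $k+1$; making this fully precise (especially in the presence of non-convex prototiles and reflected copies) is where the technical bookkeeping lies, even though no new algebraic ingredient beyond Theorem~\ref{thm: CAST1} is required.
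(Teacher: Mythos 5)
Your route is genuinely different from the paper's, and it has a real gap at its first step. You assert that any prototile $Q$ with vertices in $\mathbb{Z}\left[\zeta_{2n}\right]$ admits a dissection into the basic isosceles triangles $P_{k}$. This is false in general: the basic triangles all have angles that are integer multiples of $\frac{\pi}{2n}$, so any polygon dissected into them must have all its interior angles equal to integer multiples of $\frac{\pi}{2n}$, whereas a polygon with vertices in $\mathbb{Z}\left[\zeta_{2n}\right]$ need not satisfy this (e.g.\ the triangle with vertices $0$, $1$, $2+\zeta_{2n}$). Even when the angle condition holds, the shoelace formula only gives the area as $\sum_{k}m_{k}A_{n,k}$ with $m_{k}\in\mathbb{Z}$, not $m_{k}\in\mathbb{N}_{0}$, so an actual dissection (which is what you need for non-negative coefficients) is a strictly stronger claim. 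On top of that, the compatibility of the dissection with the substitution --- which you correctly identify as the main obstacle --- is exactly the point that would make the refined $\mathcal{T}$ a bona fide substitution tiling, and you leave it unresolved; without it there is no induced substitution rule on the $P_{k}$ and Theorem~\ref{thm: CAST1} cannot be invoked.

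The paper sidesteps all of this by never constructing a refined tiling. It works purely at the level of areas and matrices: since the vertices lie in $\mathbb{Z}\left[\zeta_{2n}\right]$, the relative areas $\frac{A(P_{k}^{*})}{A(P_{1}^{*})}$ lie in $\mathbb{Z}\left[\mu_{n}\right]$, hence can be written as integer combinations of the diagonals $\mu_{n,k}$; this yields a transformation matrix $T$ with $x_{A}^{*}=Tx_{A}$ and $M_{n}^{*}=TM_{n}T^{-1}$, so the eigenvalue $\lambda$ of $\mathcal{T}^{*}$ coincides with that of a corresponding standard CAST $\mathcal{T}$, and the conclusions about $\lambda$, primitivity and aperiodicity transfer. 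The price of the paper's approach is that it establishes only a correspondence of spectral data rather than a geometric reduction; the price of yours is that the geometric reduction simply does not exist for arbitrary cyclotomic prototiles. If you want to salvage your argument, you would have to restrict to prototiles whose angles are multiples of $\frac{\pi}{2n}$ and then still prove the substitution-compatibility of the dissection, which is substantial additional work not present in your sketch.
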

\begin{proof}
Let a CAST $\mathcal{T}^{*}$ exist with $l\geq l_{min}$ prototiles
$P_{k}^{*},\;l\geq k\geq1$ with a real positive, primitive substitution
matrix $M_{n}^{*}\in\mathbb{\mathbb{N}}_{0}^{l\times l}$ and a real
positive eigenvector $x_{A}^{*}\in\mathbb{R}^{l}$ whose elements
represent the relative areas $\frac{A(P_{k}^{*})}{A(P_{1}^{*})}$
of the prototiles:

\begin{equation}
x_{A}^{*}=\frac{1}{A(P_{1}^{*})}\left(\begin{array}{c}
A(P_{l}^{*})\\
\vdots\\
A(P_{3}^{*})\\
A(P_{2}^{*})\\
A(P_{1}^{*})
\end{array}\right)=\left(\begin{array}{c}
\frac{A(P_{l}^{*})}{A(P_{1}^{*})}\\
\vdots\\
\frac{A(P_{3}^{*})}{A(P_{1}^{*})}\\
\frac{A(P_{2}^{*})}{A(P_{1}^{*})}\\
1
\end{array}\right)
\end{equation}

with

\begin{equation}
A(P_{l}^{*})\geq A(P_{l-1}^{*})\geq\cdots\geq A(P_{2}^{*})\geq A(P_{1}^{*})>0
\end{equation}

and

\begin{equation}
\frac{A(P_{l}^{*})}{A(P_{1}^{*})}\geq\frac{A(P_{l-1}^{*})}{A(P_{1}^{*})}\geq\cdots\geq\frac{A(P_{2}^{*})}{A(P_{1}^{*})}\geq\frac{A(P_{1}^{*})}{A(P_{1}^{*})}=1
\end{equation}

Since $\mathcal{T}^{*}$ is a CAST by definition and all coordinates
of all vertices are algebraic integers in $\mathbb{Z}\left[\zeta_{2n}\right]$
the areas of all pototiles are real numbers so that

\begin{equation}
A(P_{k}^{*})\in\mathbb{R\cap Z}\left[\zeta_{2n}\right]=\mathbb{Z}\left[\zeta_{2n}+\overline{\zeta_{2n}}\right]=\mathbb{\mathbb{Z}}\left[\mu_{n}\right]
\end{equation}
 and so also 
\begin{equation}
\frac{A(P_{k}^{*})}{A(P_{1}^{*})}\in\mathbb{\mathbb{Z}}\left[\mu_{n}\right].
\end{equation}

As a consequence a transformation matrix $T\in\mathbb{\mathbb{\mathbb{Z}}}^{l\times\left\lfloor n/2\right\rfloor }$
and its inverse $T^{-1}\in\mathbb{\mathbb{\mathbb{Q}}}^{\left\lfloor n/2\right\rfloor \times l}$
exist such that:

\begin{equation}
x_{A}^{*}=Tx_{A}
\end{equation}

\begin{equation}
M_{n}^{*}=TM_{n}T^{-1}
\end{equation}

\begin{equation}
\lambda=c^{T}x_{A}=c^{*T}x_{A}^{*}=c^{*T}Tx_{A}
\end{equation}

In other words, if there is a CAST $\mathcal{T}^{*}$ then also a
corresponding CAST $\mathcal{T}$ (as defined in Theorem~\ref{thm: CAST1})
with the same eigenvalue $\lambda$ does exist. That includes but
may not be limited to all cases where the CASTs $\mathcal{T}$ and
$\mathcal{T}^{*}$ are mutually locally derivable.
\end{proof}

A similar approach is possible for the frequencies of the corresponding
CASTs $\mathcal{T}$ and $\mathcal{T}^{*}$. As a result Theorem~\eqref{thm:CASTs-Aperiodic}
also applies to CAST $\mathcal{T}^{*}$.

The corresponding CASTs $\mathcal{T}$ and $\mathcal{T}^{*}$ have
the same eigenvalue, so Theorem~\eqref{thm:Minimal_Inflation_Multiplier}
also applies to CAST $\mathcal{T}^{*}$.

\begin{rem}
$l_{min}$ is given by the algebraic degree of $\lambda$ which depends
on $n$. In other words, $\lambda$ is a solution of an irreducible
polynomial with integer coefficients, and $\lambda$ is of at least
$l_{min}$-th degree. $l_{min}$ can be described by Euler's totient
function denoted by $\Phi$, in detail $l_{min}=\frac{\Phi\left(n\right)}{2}$,
$odd\;n$ and $l_{min}=\Phi\left(n\right)$, $even\;n$.
\end{rem}

\begin{rem}
The eigenvalue $\lambda$ as noted in Equation~\eqref{eq:lambda-a-1}
and~\eqref{eq:lambda-a-1-vector} is unique if $l_{min}=\left\lfloor n/2\right\rfloor $.
This is the case if $n$ is a prime. If $n$ is not a prime, $\lambda$
may have more than one corresponding vector $c$. E.g. for $n=9$
we can show that $\mu_{9,4}=\mu_{9,2}+\mu_{9,1}$. An eigenvalue $\lambda=\mu_{9,4}-\mu_{9,1}=\mu_{9,2}$
implies that the corresponding vector $c\in\left\{ \left(\begin{array}{c}
1\\
0\\
0\\
-1
\end{array}\right),\left(\begin{array}{c}
0\\
0\\
1\\
0
\end{array}\right)\right\} $. Only the latter one is real positive. As a consequence we will consider
$\lambda\in\mathbb{\mathbb{\mathbb{N}}}_{0}\left[\mu_{n}\right]$
as noted in Equation~\eqref{eq:lambda-a-1-1} if at least one real
positive $c$ exist.
\end{rem}
\clearpage{}

\section{\label{sec:CASTs_with_minimal_inflation_multiplier}CASTs with Minimal
Inflation Multiplier}

In the this section, we discuss CASTs with minimal inflation multiplier
as noted in Theorem~\ref{thm:Minimal_Inflation_Multiplier}.

\subsection{\label{sub:The-case-n-odd}The case odd n}

The substitution matrix $M_{n,min}$ for CASTs with minimal eigenvalue
$\lambda_{min}=\mu_{n,3}+1,\;odd\;n$ is given by the following scheme:

\begin{equation}
M_{5,min}=\left(\begin{array}{cc}
2 & 1\\
1 & 1
\end{array}\right)
\end{equation}

\[
M_{7,min}=\left(\begin{array}{ccc}
2 & 1 & 1\\
1 & 2 & 0\\
1 & 0 & 1
\end{array}\right)
\]

\[
M_{9,min}=\left(\begin{array}{cccc}
2 & 1 & 1 & 0\\
1 & 2 & 0 & 1\\
1 & 0 & 2 & 0\\
0 & 1 & 0 & 1
\end{array}\right)
\]

\[
M_{n,min}=\left(\begin{array}{cccccc}
2 & 1 & 1 & 0 & \cdots & 0\\
1 & 2 & 0 & \ddots & \ddots & \vdots\\
1 & 0 & \ddots & \ddots & \ddots & 0\\
0 & \ddots & \ddots & \ddots & 0 & 1\\
\vdots & \ddots & \ddots & 0 & 2 & 0\\
0 & \cdots & 0 & 1 & 0 & 1
\end{array}\right)\;\;\;\;\;\;(odd\;n)
\]

The case $n=5$ describes the Penrose tiling with rhombs or Robinson
triangles and individual dihedral symmetry $D_{5}$. The cases with
$n>5$ are more difficult and require additional prototiles. For $n=7$
we give an example of a CAST $\mathcal{T}^{*}$ with individual dihedral
symmetry $D_{7}$ as shown in Fig.~\ref{fig:CAST7min} and the following
properties. $A(P_{k})$ stands for the areas of a prototile $P_{k}$.

\begin{equation}
x_{A}^{*}=\begin{pmatrix}\mu_{n,3}\\
\mu_{n,2}\\
\mu_{n,3}+\mu_{n,1}\\
\mu_{n,1}
\end{pmatrix}=\frac{1}{A(P_{1})}\begin{pmatrix}A(P_{3})\\
A(P_{2})\\
A(P_{1}^{'})\\
A(P_{1})
\end{pmatrix}
\end{equation}

\begin{equation}
T=\left(\begin{array}{ccc}
1 & 0 & 0\\
0 & 1 & 0\\
1 & 0 & 1\\
0 & 0 & 1
\end{array}\right)
\end{equation}

\begin{equation}
M_{7}^{*}=\left(\begin{array}{cccc}
1 & 1 & 1 & 0\\
1 & 2 & 0 & 0\\
3 & 1 & 0 & 2\\
0 & 0 & 1 & 0
\end{array}\right)
\end{equation}

\begin{figure}
\begin{center}
\resizebox{0.8\textwidth}{!}{%

\includegraphics{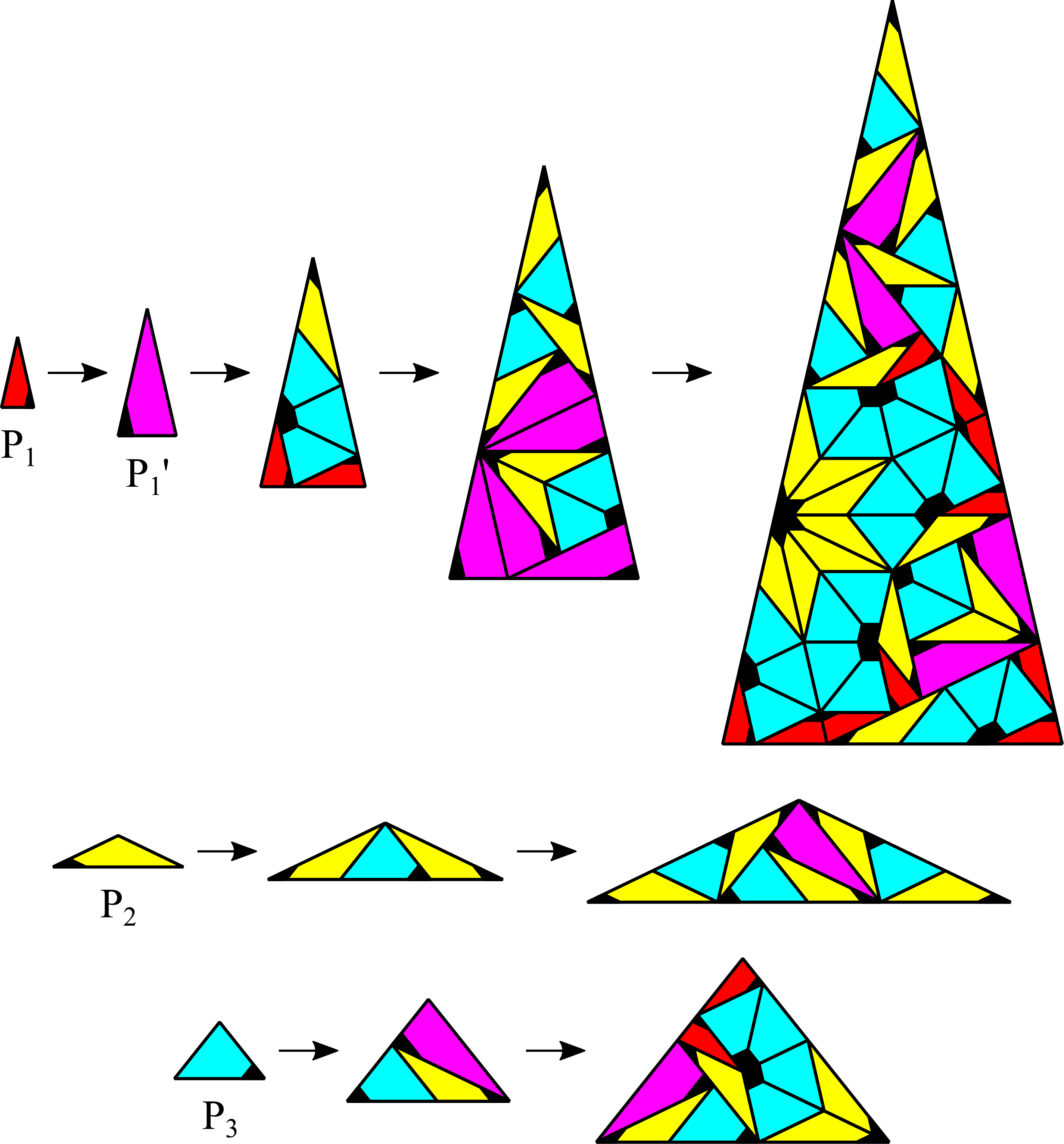}}
\end{center}\caption{\label{fig:CAST7min}CAST for the case $n=7$ with minimal inflation
multiplier. The black tips of the prototiles mark their respective
chirality.}
\end{figure}

A further example with $n=9$, minimal inflation multiplier but without
individual dihedral symmetry $D_{9}$ has been found but is not included
here.

Another CAST with $n=7$, individual dihedral symmetry $D_{7}$ and
the same minimal inflation multiplier is given in \citep[ Fig. 1, Sec. 3, 2nd matrix]{Nischke1996}
and shown in Fig.~\ref{fig:Nischke-Danzer-Mu2}.
\begin{conjecture}
\label{conj:CASTs-minimal-inflation-factor-n-odd}CASTs with minimal
inflation multiplier $\eta_{min}=\zeta_{2n}^{1}+\overline{\zeta_{2n}^{1}}=\mu_{n,2}$
and individual dihedral symmetry $D_{n}$ exist for every $odd\;n\geq5$.
\end{conjecture}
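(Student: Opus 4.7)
The plan is to produce, for each odd $n \geq 5$, an explicit realization of the substitution matrix $M_{n,min}$ given at the start of Section~\ref{sec:CASTs_with_minimal_inflation_multiplier}, generalizing the Penrose/Robinson construction at $n=5$ and the example in Figure~\ref{fig:CAST7min} at $n=7$. Since Theorems~\ref{thm: CAST1}, \ref{thm:CASTs-Aperiodic} and \ref{thm:Minimal_Inflation_Multiplier} already handle primitivity, aperiodicity and minimality \emph{once a geometric substitution exists}, the entire content of the conjecture is the construction of such a geometric substitution with a $D_n$-symmetric seed.

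First, I would fix as prototiles the isosceles triangles $T_{n,k}$ with apex angle $k\pi/n$ and unit legs, for $1 \le k \le (n-1)/2$, together with mirror images to track chirality. By Equation~\eqref{eq:triangle-area}, their areas are proportional to $\sin(k\pi/n)$, which after normalization gives exactly the eigenvector $x_A$ in Equation~\eqref{eq:eigenvektor-1}. The desired combinatorial substitution is then read column-by-column off $M_{n,min}$: $T_{n,1}$ is replaced by two copies of itself plus one copy of $T_{n,2}$ plus one copy of $T_{n,3}$; $T_{n,2}$ by two copies of itself plus $T_{n,1}$ plus $T_{n,4}$; and so on along the tridiagonal-plus-corner pattern exhibited in $M_{n,min}$.

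Next, I would produce a geometric dissection rule realizing these multiplicities. Inflate $T_{n,k}$ by $\mu_{n,2}=2\cos(\pi/n)$ and draw cevians from the apex and base vertices whose feet lie on the opposite sides at signed distances that are $\mathbb{Z}$-combinations of the diagonals $\mu_{n,j}$. The diagonal product identity~\eqref{eq:goldenfield} $\mu_{n,2}\mu_{n,k}=\mu_{n,k-1}+\mu_{n,k+1}$, combined with the symmetry $\mu_{n,k}=\mu_{n,n-k}$ from \eqref{eq:diagonal-equivalenz}, guarantees that edge lengths along the cuts are compatible and that the resulting pieces are similar copies of the $T_{n,k}$ in precisely the multiplicities prescribed by $M_{n,min}$. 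An inductive recipe in $k$ should work: place inside the inflated $T_{n,k}$ a copy of $T_{n,k-1}$ and a copy of $T_{n,k+1}$ sharing a leg with one of the two copies of $T_{n,k}$, and iterate the placement using the same identity.

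To obtain individual dihedral symmetry $D_n$, I would build a seed patch by assembling $2n$ copies of $T_{n,1}$ around a common apex, producing a regular $n$-gon with $D_n$ stabilizer. If the substitution rule on each $T_{n,k}$ is designed to be symmetric under reflection across the triangle's axis of symmetry (a choice compatible with the combinatorial data, and explicitly available for $n=5,7$), then the $D_n$ symmetry of the seed is preserved at every level, giving patches of arbitrary size with the required symmetry. Aperiodicity follows from Theorem~\ref{thm:CASTs-Aperiodic} once the rule contains a pair of rotated copies of some prototile, which can be arranged by construction; primitivity of $M_{n,min}$ is immediate from its positive first column and the irreducibility discussion after Equation~\eqref{Bedingung_f=0000FCr_Mn_primitiv_falls_n_gerade}.

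The hard part will be the dissection step itself. Existence of a valid, overlap-and-gap-free geometric cut pattern realizing $M_{n,min}$ in every $T_{n,k}$ is not forced by the algebraic identities alone; one also needs the cevian feet to land inside the base segment in the correct cyclic order and the inserted subtriangles to have matching chiralities. For small $n$ this is a routine case check, but a uniform construction for all odd $n \ge 5$ requires either an explicit inductive template on $(n,k)$ or an appeal to the Gaps-to-Prototiles algorithm of Section~\ref{sec:Gaps_to_Prototiles_Algorithm} together with a pigeonhole argument showing that the gaps produced always decompose into the existing $T_{n,k}$. This combinatorial-geometric verification, rather than any new algebra, is precisely why the statement is offered as a conjecture.
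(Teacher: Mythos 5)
The statement you are trying to prove is offered in the paper as a \emph{conjecture}, and the paper explicitly leaves it open (``The status of Conjecture~\ref{conj:CASTs-minimal-inflation-factor-n-odd} is subject to further research''); the only evidence supplied is the Penrose tiling for $n=5$, two explicit constructions for $n=7$ (Fig.~\ref{fig:CAST7min} and Fig.~\ref{fig:Nischke-Danzer-Mu2}), and a reported $n=9$ example that fails to have $D_9$ symmetry. Your proposal correctly identifies the intended framework --- the matrix $M_{n,min}$, isosceles-triangle prototiles with areas proportional to $\sin(k\pi/n)$, and the fact that Theorems~\ref{thm: CAST1}, \ref{thm:CASTs-Aperiodic} and \ref{thm:Minimal_Inflation_Multiplier} reduce everything to exhibiting a geometric substitution --- but it does not close the gap: the existence of an overlap-free, gap-free dissection realizing $M_{n,min}$ for \emph{every} odd $n\geq 5$ is exactly the open content of the conjecture, and your own final paragraph concedes that this step is missing. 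A plan that defers the decisive step is not a proof, and nothing in the paper can be cited to supply it.

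Two concrete errors in the sketch are worth flagging. First, the multiplicities you read off $M_{n,min}$ are wrong: with $\lambda_{min}=\mu_{n,3}+1$ and the eigenvector ordering of Equation~\eqref{eq:eigenvektor-1}, the inflated smallest triangle $T_{n,1}$ has area $\mu_{n,3}+\mu_{n,1}$ and so decomposes into one copy of $T_{n,3}$ and one copy of $T_{n,1}$ (the bottom row $(1,0,\dots,0,1)$ of $M_{n,min}$), not ``two copies of itself plus one $T_{n,2}$ plus one $T_{n,3}$''; similarly $\mu_{n,2}(\mu_{n,3}+1)=2\mu_{n,2}+\mu_{n,4}$ contains no copy of $T_{n,1}$. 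Second, your mechanism for obtaining $D_n$ symmetry --- designing each triangle's substitution rule to be mirror-symmetric across its axis --- is ruled out both by the paper's explicit remark that ``the substitution rules of the CASTs in Section~\ref{sub:The-case-n-odd} have no dihedral symmetry'' and by a chirality count: a rule containing a single copy of a chiral prototile off the symmetry axis (as the $P_1$ rule must, since it contains exactly one $P_3$ and one $P_1$) cannot be reflection-invariant. In the known examples the $D_n$-symmetric patches arise from legal $D_n$-symmetric vertex configurations that recur under substitution, not from symmetric rules, and establishing that such configurations exist and recur for all odd $n$ is precisely the unproved combinatorial-geometric core of the conjecture.
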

The status of Conjecture~\ref{conj:CASTs-minimal-inflation-factor-n-odd}
is subject to further research.

Formally all preferable conditions are met. However, for large $n$,
this type of CAST tends to be complex and the density of patches with
individual dihedral symmetry $D_{n}$ tends to be small.

\begin{figure}
\begin{center}
\resizebox{\textwidth}{!}{%

\includegraphics{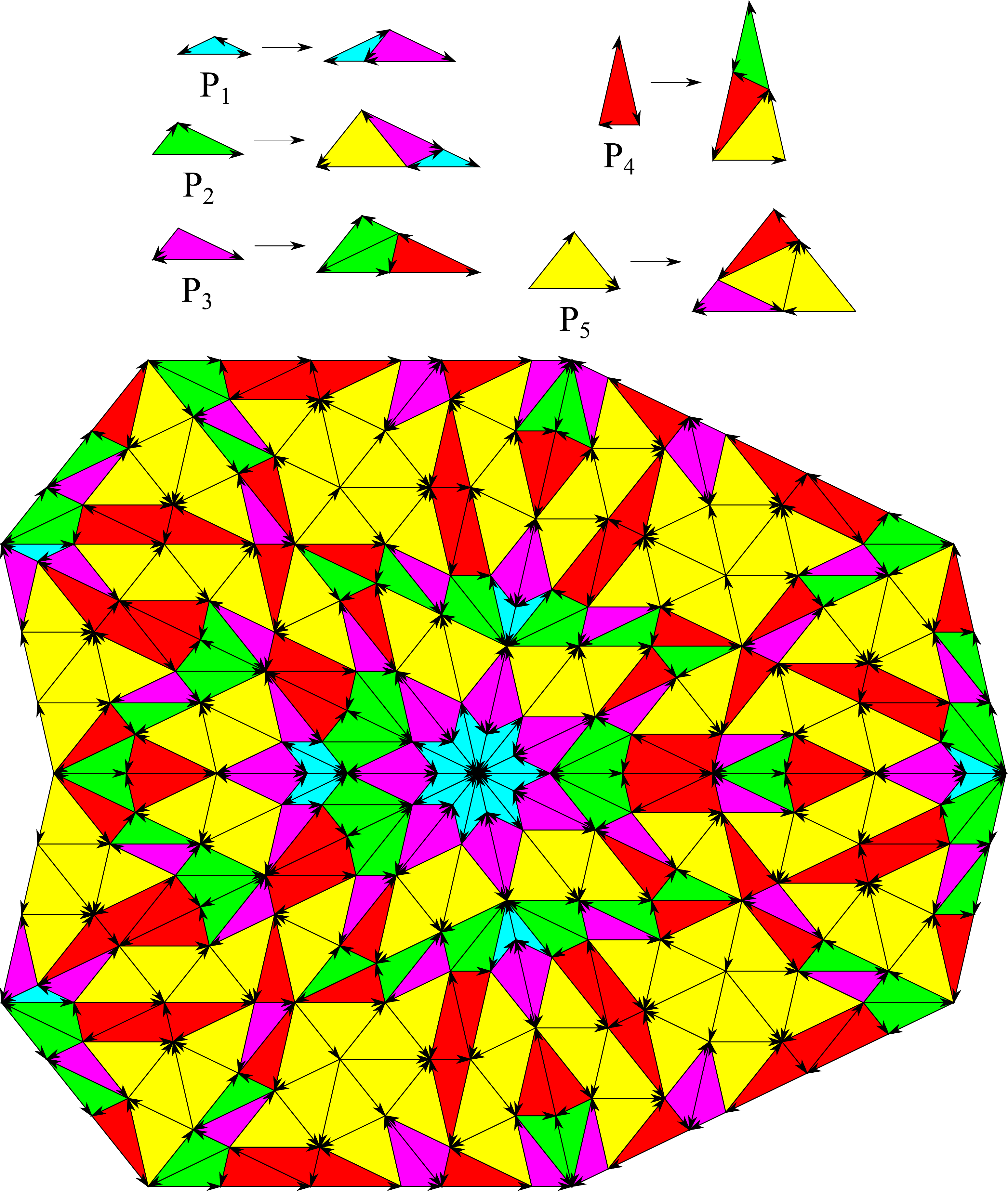}}
\end{center}\caption{\label{fig:Nischke-Danzer-Mu2} CAST for the case $n=7$ with minimal
inflation multiplier as described in \citep[Fig. 1 and Sec. 3, 2nd matrix]{Nischke1996}}
\end{figure}

\begin{rem}
The substitution rules of the CASTs in Section~\ref{sub:The-case-n-odd}
have no dihedral symmetry. The minimal inner angle of a prototile
is $\frac{\pi}{n}$. For this reason the CASTs in Section~\ref{sub:The-case-n-odd}
may have individual dihedral symmetry $D_{n}$ but not $D_{2n}$.
\end{rem}

\subsection{\label{sub:The-case-n-even}The case even n}

The substitution matrices $M_{n,min}$ for CASTs with minimal eigenvalue
$\lambda_{A,min}=\mu_{n,2}+2$ is given by the following scheme:

\begin{equation}
M_{4,min}=\left(\begin{array}{cc}
2 & 2\\
1 & 2
\end{array}\right)
\end{equation}

\[
M_{6,min}=\left(\begin{array}{ccc}
2 & 2 & 0\\
1 & 2 & 1\\
0 & 1 & 2
\end{array}\right)
\]

\[
M_{8,min}=\left(\begin{array}{cccc}
2 & 2 & 0 & 0\\
1 & 2 & 1 & 0\\
0 & 1 & 2 & 1\\
0 & 0 & 1 & 2
\end{array}\right)
\]

\[
M_{n,min}=\left(\begin{array}{cccccc}
2 & 2 & 0 & 0 & \cdots & 0\\
1 & 2 & 1 & \ddots & \ddots & \vdots\\
0 & 1 & \ddots & \ddots & \ddots & 0\\
0 & \ddots & \ddots & \ddots & 1 & 0\\
\vdots & \ddots & \ddots & 1 & 2 & 1\\
0 & \cdots & 0 & 0 & 1 & 2
\end{array}\right)\;\;\;\;\;\;(even\;n)
\]

The Lançon-Billard tiling (also known as binary tiling) \citep{LanconF.1988,C.Godreche1992,oro38933}
is a rhomb tiling with inflation multiplier $|\eta|=|1+\zeta_{2n}^{1}|$
for the case $n=5$. It is possible to generalize it for all $n\geq4$.
The approach is very similar to the approach in \citep{journals/dcg/Harriss05}.
The generalized substitution rules are shown in Fig.~\ref{fig:LB-Tiling-Generalized}. 

\begin{figure}
\begin{center}
\resizebox{\textwidth}{!}{%

\includegraphics{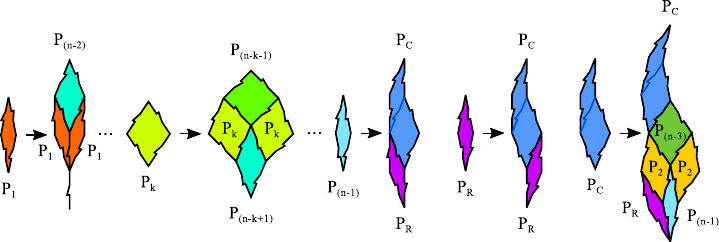}}
\end{center}\caption{\label{fig:LB-Tiling-Generalized}Generalized {\small{}Lançon-Billard
tiling}}
\end{figure}
The eigenvector $x_{B}$ and substitution matrix $M_{n}^{*}$ are
given by:

\begin{equation}
x_{A}^{*}=\begin{pmatrix}\mu_{n-1}+\mu_{2}\\
\mu_{n-1}\\
\mu_{n-1}\\
\vdots\\
\mu_{1}
\end{pmatrix}=\frac{1}{A(P_{1})}\begin{pmatrix}A(P_{C})\\
A(P_{R})\\
A(P_{n-1})\\
\vdots\\
A(P_{1})
\end{pmatrix}
\end{equation}

\begin{equation}
M_{4}^{*}=\left(\begin{array}{ccccc}
1 & 1 & 1 & 2 & 1\\
1 & 1 & 0 & 0 & 0\\
1 & 1 & 0 & 0 & 0\\
0 & 0 & 1 & 2 & 1\\
0 & 0 & 0 & 1 & 2
\end{array}\right)
\end{equation}

\[
M_{6}^{*}=\left(\begin{array}{ccccccc}
1 & 1 & 1 & 0 & 1 & 2 & 0\\
1 & 1 & 0 & 0 & 0 & 0 & 0\\
1 & 1 & 0 & 0 & 0 & 0 & 0\\
0 & 0 & 0 & 2 & 1 & 0 & 1\\
0 & 0 & 0 & 1 & 2 & 1 & 0\\
0 & 0 & 1 & 0 & 1 & 2 & 0\\
0 & 0 & 0 & 1 & 0 & 0 & 2
\end{array}\right)
\]

\[
M_{8}^{*}=\left(\begin{array}{ccccccccc}
1 & 1 & 1 & 0 & 1 & 0 & 0 & 2 & 0\\
1 & 1 & 0 & 0 & 0 & 0 & 0 & 0 & 0\\
1 & 1 & 0 & 0 & 0 & 0 & 0 & 0 & 0\\
0 & 0 & 0 & 2 & 0 & 0 & 1 & 0 & 1\\
0 & 0 & 0 & 0 & 2 & 1 & 0 & 1 & 0\\
0 & 0 & 0 & 0 & 1 & 2 & 1 & 0 & 0\\
0 & 0 & 0 & 1 & 0 & 1 & 2 & 0 & 0\\
0 & 0 & 1 & 0 & 1 & 0 & 0 & 2 & 0\\
0 & 0 & 0 & 1 & 0 & 0 & 0 & 0 & 2
\end{array}\right)
\]

\begin{rem}
\label{rem:LB_tiling_for_odd_n}For $odd\;n$, the scheme and the
substitution matrix $M_{n}^{*}$ can be separated into two independent
parts. We choose the part which relays on prototiles $P_{k},\;n>k\geq1,\;odd\;k$
only. The eigenvector is given by the areas of the rhombic prototiles
with side length 1 and area $A(P_{k})=\sin\left(\frac{k\pi}{n}\right)$.
With Equation~\eqref{eq:diagonal-equivalenz} and~\eqref{eq:diagonal-equivalenz-2}
we can write: 
\begin{equation}
\frac{1}{A(P_{1})}\begin{pmatrix}A(P_{2\left\lfloor n/4\right\rfloor +1})\\
\vdots\\
A(P_{n-4})\\
A(P_{3})\\
A(P_{n-2})\\
A(P_{1})
\end{pmatrix}=\frac{1}{A(P_{1})}\begin{pmatrix}A(P_{\left\lfloor n/2\right\rfloor })\\
\vdots\\
A(P_{4})\\
A(P_{3})\\
A(P_{2})\\
A(P_{1})
\end{pmatrix}=\begin{pmatrix}\mu_{n,\left\lfloor n/2\right\rfloor }\\
\vdots\\
\mu_{n,4}\\
\mu_{n,3}\\
\mu_{n,2}\\
\mu_{n,1}
\end{pmatrix}=x_{A}\;\;\;\;\;\;(odd\;n)\label{eq:xa_for_odd_n}
\end{equation}

As a result, we can use Equation~\eqref{eq:M-odd} to describe the
substitution matrix:

\begin{equation}
M_{n}=M_{n,2}+2E\;\;\;\;\;\;(odd\;n)
\end{equation}

\end{rem}
Regarding generalized Lançon-Billard tilings, we can confirm the results
sketched in \citep{2015arXiv150902053H}.

The generalized Lançon-Billard tiling does not contain any patches
with individual dihedral symmetry $D_{n}$ or $D_{2n}$. However,
for $n=4$ and $n=5$, other CASTs with individual symmetry $D_{8}$
and $D_{10}$ have been derived and submitted to \citep{HFonl}. An
example for $n=4$ is mentioned in Section~\ref{sec:Gaps_to_Prototiles_Algorithm}
and shown in Fig.~\ref{fig:Residual-CAST-4}.

\clearpage{}

\section{\label{sec:CASTs_with_inflation_multiplier_=0000B5n,n-1/2_n_odd}CASTs
with Inflation Multiplier Equal to the Longest Diagonal of a Regular
Odd n-Gon }

Another interesting approach to identify CASTs with preferred properties
is to choose a CAST as described in Theorem~\ref{thm: CAST1} and
a relative small inflation multiplier, in detail the longest diagonal
of a regular $n$-gon with $odd\;n$:

\begin{equation}
\eta=\mu_{n,\left\lfloor n/2\right\rfloor }\;\;\;\;\;\;(odd\;n)
\end{equation}

Because of Equations~\ref{eq:lambda-a-square}, \ref{eq:goldenfield}
and~\ref{eq:M-odd}, we can write:

\begin{equation}
M_{n}=M_{n,\left\lfloor n/2\right\rfloor }^{2}=\sum_{i=1}^{\left\lfloor n/2\right\rfloor }M_{n,i}=\left(\begin{array}{ccccc}
n & n-1 & \cdots & 2 & 1\\
n-1 & n-1 & \cdots & 2 & 1\\
\vdots & \vdots & \ddots & \vdots & \vdots\\
2 & 2 & \cdots & 2 & 1\\
1 & 1 & \cdots & 1 & 1
\end{array}\right)\label{eq:DHW-Matrix}
\end{equation}

For the case $n=5$, we have the Penrose tiling, as described in \citep[Ex. 6.1]{oro38933}
and \citep[Fig. 10.3.14]{Grunbaum:1986:TP:19304}. For the case $n=7$,
we have different tiling in \citep["Danzer's 7-fold variant"]{HFonl},
\citep[Fig. 11]{Nischke1996} and \citep{MathPages1}. More examples
for all cases $n\leq15$ can be found in \citep["Half Rhombs"]{THonl}.
The matrix in Equation~\eqref{eq:DHW-Matrix} is equivalent to those
mentioned in \citep["Half Rhombs"]{THonl}. However, it is much older.
To the knowledge of the author, it appeared first in \citep{Warrington1988a}.
\begin{conjecture}
\label{conj:Verallgemeinerte-Penrose-CAST}CASTs as described in Theorem~\ref{thm: CAST1}
with inflation multiplier $\eta=\mu_{n,\left\lfloor n/2\right\rfloor }$,
substitution matrix $M_{n}=\left(M_{n,\left\lfloor n/2\right\rfloor }\right){}^{2}$
and exactly $\left\lfloor n/2\right\rfloor $ prototiles and $\left\lfloor n/2\right\rfloor $
corresponding substitution rules exist for every $odd\;n\geq5$.
\end{conjecture}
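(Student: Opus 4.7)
The plan is to construct, for every odd $n\ge 5$, an explicit CAST realizing the conjecture. I take the $m:=\lfloor n/2\rfloor$ prototiles $P_1,\ldots,P_m$ to be ``half-rhombs,'' i.e.\ isosceles triangles with apex angle $k\pi/n$ and equal legs of length $1$, so that $A(P_k)/A(P_1)=\mu_{n,k}$ and the area eigenvector of Equation~\eqref{eq:eigenvektor-1} is $x_A=(\mu_{n,m},\ldots,\mu_{n,1})^T$. After a similarity placing one vertex at the origin and one leg along $\zeta_{2n}^{0}$, every vertex of every prototile lies in $\mathbb{Z}[\zeta_{2n}]$, so the hypotheses of Theorem~\ref{thm: CAST1} are satisfied.

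The arithmetic content of the conjecture is essentially forced. With $\eta=\mu_{n,m}$, repeated application of the diagonal product formula~\eqref{eq:goldenfield} together with~\eqref{eq:diagonal-equivalenz} yields $\lambda=\mu_{n,m}^{2}=\sum_{k=1}^{m}\mu_{n,k}$, so the coefficient vector of~\eqref{eq:coefficient_vector} is $c=(1,\ldots,1)^T$ and the substitution matrix is uniquely determined as $M_n=\sum_{k=1}^{m}M_{n,k}=M_{n,m}^{2}$, which is precisely~\eqref{eq:DHW-Matrix}. Primitivity is immediate since every entry of $M_n$ is strictly positive, and aperiodicity will follow from Theorem~\ref{thm:CASTs-Aperiodic} as soon as a geometric realization is in hand, because the top row of $M_n$ forces the substitution of the largest prototile to contain at least two copies of itself, which in any honest dissection must be rotationally distinct.

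The real content of the conjecture, and the main obstacle, is the \emph{geometric realizability} of these substitution rules: producing, for each $k$ and uniformly in $n$, an explicit dissection of the inflated triangle $\mu_{n,m}P_k$ into copies of $P_1,\ldots,P_m$ with the prescribed multiplicities, consistent along shared edges. My proposed route is an induction on $k$ at fixed $n$, starting from the substitution of the thinnest tile $P_m$, which by the last row of~\eqref{eq:DHW-Matrix} contains exactly one copy of each prototile, and then passing from the substitution of $P_{k+1}$ to that of $P_k$ by attaching the ``collar'' dictated by the clean row differences $(M_n)_{k,\cdot}-(M_n)_{k+1,\cdot}=(1,\ldots,1,0,\ldots,0)$ with $k$ leading ones. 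Equivalently one can try to build the whole family from the fan-triangulation of a regular $n$-gon, whose pieces are already half-rhombs in roughly the right multiplicities. Explicit constructions along these lines are known for every odd $n\leq 15$ (see \citep[``Half Rhombs'']{THonl} and \citep{Warrington1988a}); the delicate step, and the reason this remains a conjecture, will be verifying that the chosen dissections can be defined uniformly in $n$ and that they match across edges for arbitrary odd $n$.
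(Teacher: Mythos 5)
The statement you are trying to prove is stated in the paper as a \emph{conjecture}, and the paper offers no proof of it: it only supplies the arithmetic scaffolding (the identity $M_{n}=M_{n,\left\lfloor n/2\right\rfloor }^{2}=\sum_{i=1}^{\left\lfloor n/2\right\rfloor }M_{n,i}$ of Equation~\eqref{eq:DHW-Matrix}), points to known realizations for $n=5$ (Penrose), $n=7$ (Danzer's 7-fold variant), and all odd $n\leq15$ in the cited ``Half Rhombs'' material, and explicitly leaves existence for arbitrary odd $n$ open. Your proposal reproduces that arithmetic correctly --- the half-rhomb prototiles, the eigenvector $x_{A}=(\mu_{n,\left\lfloor n/2\right\rfloor },\ldots,\mu_{n,1})^{T}$, and $\lambda=\mu_{n,\left\lfloor n/2\right\rfloor }^{2}=\sum_{k=1}^{\left\lfloor n/2\right\rfloor }\mu_{n,k}$ via the diagonal product formula~\eqref{eq:goldenfield} --- and you correctly identify the geometric realizability of the substitution rules as the actual content. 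But your inductive ``collar'' construction is only a programme: you never exhibit the dissection of $\mu_{n,\left\lfloor n/2\right\rfloor }P_{k}$ for general $n$, never verify edge-matching, and you say so yourself. So the gap in your argument is exactly the gap that makes this a conjecture rather than a theorem in the paper; what you have written is a plausible strategy plus a correct derivation of the combinatorial data, not a proof, and it does not go beyond what the paper already asserts.

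Two smaller points. First, your claim that aperiodicity is automatic because the top row of $M_{n}$ forces two copies of the largest prototile ``which in any honest dissection must be rotationally distinct'' is not justified: two copies of a tile in a dissection can perfectly well be translates or reflections of one another, and the paper's own remark after Theorem~\ref{thm:CASTs-Aperiodic} stresses that the hypothesis (a pair of copies whose orientations differ by $\frac{k\pi}{n}$, $n>k>0$) must be checked case by case for each concrete substitution rule. Second, ``the substitution matrix is uniquely determined'' by $\lambda$ is only guaranteed when $n$ is prime; for composite odd $n$ the coefficient vector $c$ need not be unique (the paper gives $\mu_{9,4}=\mu_{9,2}+\mu_{9,1}$ as an example), so you should say that you \emph{choose} $c=(1,\ldots,1)^{T}$, which is what the conjecture prescribes.
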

The substitution rules have no dihedral symmetry, so different combinations
of the prototiles chirality within the substitution rules define different
CASTs with identical inflation multiplier. For the cases $7\leq n\leq11$,
solutions with individual dihedral symmetry $D_{n}$ have been found
by trial and error. For the case $n=7$, see Fig.~\ref{fig:CAST7maxdiag}
and~\ref{fig:CAST7maxdiag-2}. For the case $n=11$, see Fig.~\ref{fig:CAST11maxdiag}.
Because of the complexity of that case, just one vertex star within
prototile $P_{4}$ has been chosen to illustrate the individual dihedral
symmetry $D_{11}$.
\begin{conjecture}
\textup{\label{conj:Verallgemeinerte-Penrose-CAST-Dn}}For every \textup{$odd\;n\geq5$,}
the substitution rules of a CAST as described in Conjecture~\ref{conj:Verallgemeinerte-Penrose-CAST}
can be modified so that the CAST yields individual dihedral symmetry
$D_{n}$.
\end{conjecture}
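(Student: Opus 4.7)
The plan has three steps: first, modify the substitution rules of Conjecture~\ref{conj:Verallgemeinerte-Penrose-CAST} so that the substitution operator $\rho$ becomes equivariant under a chosen copy of $D_n$; second, arrange this modification so that some supertile already contains a $D_n$-symmetric vertex star $S_0$; and third, use primitivity of $M_n$ to propagate $S_0$ into arbitrarily large $D_n$-symmetric patches inside the iterated supertiles of every prototile.

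For the first step, I would split each prototile $P_k$ of Conjecture~\ref{conj:Verallgemeinerte-Penrose-CAST} into a chiral pair $P_k^+, P_k^-$ related by the reflection $\sigma$ fixing the intended symmetry axis. The substitution matrix $M_n = M_{n,\lfloor n/2\rfloor}^2$ counts tiles without regard to chirality, so any chiral refinement of the rules with the correct column sums remains admissible. Fix any rules $\rho_k^+$ on $P_k^+$ supplied by Conjecture~\ref{conj:Verallgemeinerte-Penrose-CAST}, and define $\rho_k^- := \sigma \circ \rho_k^+ \circ \sigma^{-1}$. Rotational equivariance is automatic because the rules do not depend on absolute orientation, so the resulting $\rho$ satisfies $\rho(gT) = g\,\rho(T)$ for every $g \in D_n$ and every patch $T$.

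For the second step, I would design $\rho^+_{k_0}$ for some specific $k_0$ so that its interior contains a vertex $v$ around which the surrounding prototiles form a configuration invariant under the $D_n$-action at $v$; concretely, one places a rosette of $2n$ half-rhombs in alternating chiralities meeting at $v$ at the apex angle $\pi/n$. Call this patch $S_0$. For the third step, the substitution matrix $M_n$ is primitive, since all entries of $\sum_{i=1}^{\lfloor n/2\rfloor} M_{n,i}$ in Equation~\eqref{eq:DHW-Matrix} are positive, so the associated CAST is repetitive: for every prototile $P_k$ there is a power $p(k)$ such that $\rho^{p(k)}(P_k)$ contains a congruent copy of $\rho^+_{k_0}$, hence of $S_0$. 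Equivariance from the first step then gives that $\rho^{p(k)+N}(P_k)$ contains, about a common point, a $D_n$-symmetric patch whose diameter grows without bound in $N$. This is precisely the individual dihedral symmetry $D_n$ required by the conjecture.

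The principal obstacle is the second step. Chirality matching only ensures that \emph{if} a $D_n$-symmetric patch exists then its substitutes remain $D_n$-symmetric; it does not guarantee that the geometric and edge-matching constraints around the central vertex of the rosette can be satisfied while the rest of $\rho^+_{k_0}$ legally tiles the inflated $P_{k_0}$. For small $n$ the author verifies this combinatorially by hand. A uniform proof seems to require either an inductive construction passing from admissible $n$-gon vertex stars to admissible $(n+2)$-gon stars, or a direct constructive scheme extending Warrington's $n=5,7$ rules to all odd $n$ by splitting each half-rhomb into enough chiralities to freely prescribe the neighbourhood of $v$; producing either construction is, in my estimation, the essential difficulty.
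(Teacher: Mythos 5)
The statement you are trying to prove is explicitly labelled a \emph{conjecture} in the paper, and the paper offers no proof of it: the author only reports that for $7\leq n\leq 11$ substitution rules with individual dihedral symmetry $D_{n}$ were ``found by trial and error'' (Figures~\ref{fig:CAST7maxdiag}, \ref{fig:CAST7maxdiag-2} and~\ref{fig:CAST11maxdiag}), and leaves the general case open. So there is no argument in the paper to compare yours against; the relevant question is whether your proposal closes the gap the author left open. It does not, and you say so yourself: the entire content of the conjecture is concentrated in your second step, the existence, for every odd $n\geq5$, of a \emph{legal} $D_{n}$-symmetric seed (your rosette $S_{0}$ of $2n$ half-rhombs) that can be embedded in the interior of some substitution rule while the remainder of that rule still tiles the inflated prototile with the prescribed tile counts. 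Your first and third steps (chiral doubling $P_k^{\pm}$ with $\rho_k^-=\sigma\rho_k^+\sigma^{-1}$, then primitivity plus equivariance to grow the seed) are standard and essentially correct, modulo two points you should make explicit: (i) after the chiral refinement you must re-verify primitivity of the doubled substitution matrix, i.e.\ that both chiralities of the seed-carrying rule actually occur in iterated supertiles of every prototile, and (ii) the growth argument needs the substituted image of $S_{0}$ to again contain a symmetric vertex star of the same type at its center (a self-reproducing seed), which equivariance gives you only once the seed is legal in the first place.

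Concretely, then, your proposal is a reasonable reduction of the conjecture to a combinatorial existence problem, but the reduction is the easy part; the hard part --- exhibiting, uniformly in odd $n$, a substitution rule for the triangle system of Equation~\eqref{eq:DHW-Matrix} whose interior accommodates the $D_{n}$-rosette --- is exactly what the author could only do case by case, and your proposal supplies no mechanism (neither the suggested induction from $n$ to $n+2$ nor the ``split into more chiralities'' device is carried out) that would settle it. As written this is a plausible research programme, not a proof, and it should be presented as such.
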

Formally all preferable conditions are met. However, for CASTs with
large $n$ as described in Conjecture~\ref{conj:Verallgemeinerte-Penrose-CAST-Dn}
the density of patches with individual dihedral symmetry $D_{n}$
tend to be small.
\begin{rem}
CASTs with $odd\;n$ in Conjectures~\ref{conj:Verallgemeinerte-Penrose-CAST}
and~\ref{conj:Verallgemeinerte-Penrose-CAST-Dn} can easily be identified
by choosing the set of prototiles as set of isosceles triangles with
leg length $1$ and vertex angles $\frac{k\pi}{n}$. Such isosceles
triangles have areas as described in Equation~\eqref{eq:triangle-area}.
Because of Equation~\eqref{eq:diagonal-equivalenz-2} we can use
a similar approach as in Remark~\ref{rem:LB_tiling_for_odd_n}, in
detail we only use the triangles with vertex angle $\frac{k\pi}{n},\;odd\;k$
and an eigenvector $x_{A}$ as noted in Equation~\eqref{eq:xa_for_odd_n}.
As a result all inner angles of all prototiles are intgeres multiples
of $\frac{\pi}{n}$. The approach was used in \citep[Ex. 6.1]{oro38933},
\citep[Fig. 10.3.14]{Grunbaum:1986:TP:19304}, \citep["Danzer's 7-fold variant"]{HFonl},
\citep["Half Rhombs"]{THonl} and herein, see Fig.~\ref{fig:CAST7maxdiag},
\ref{fig:CAST11maxdiag} and~\ref{fig:CAST7maxdiag-2}.
\end{rem}

\begin{rem}
The substitution rules of the CASTs in this section have no dihedral
symmetry. The minimal inner angle of a prototile is $\frac{\pi}{n}$.
For this reason the CASTs in this section may have individual dihedral
symmetry $D_{n}$ but not $D_{2n}$.
\end{rem}
\begin{figure}
\begin{center}
\resizebox{0.8\textwidth}{!}{%

\includegraphics{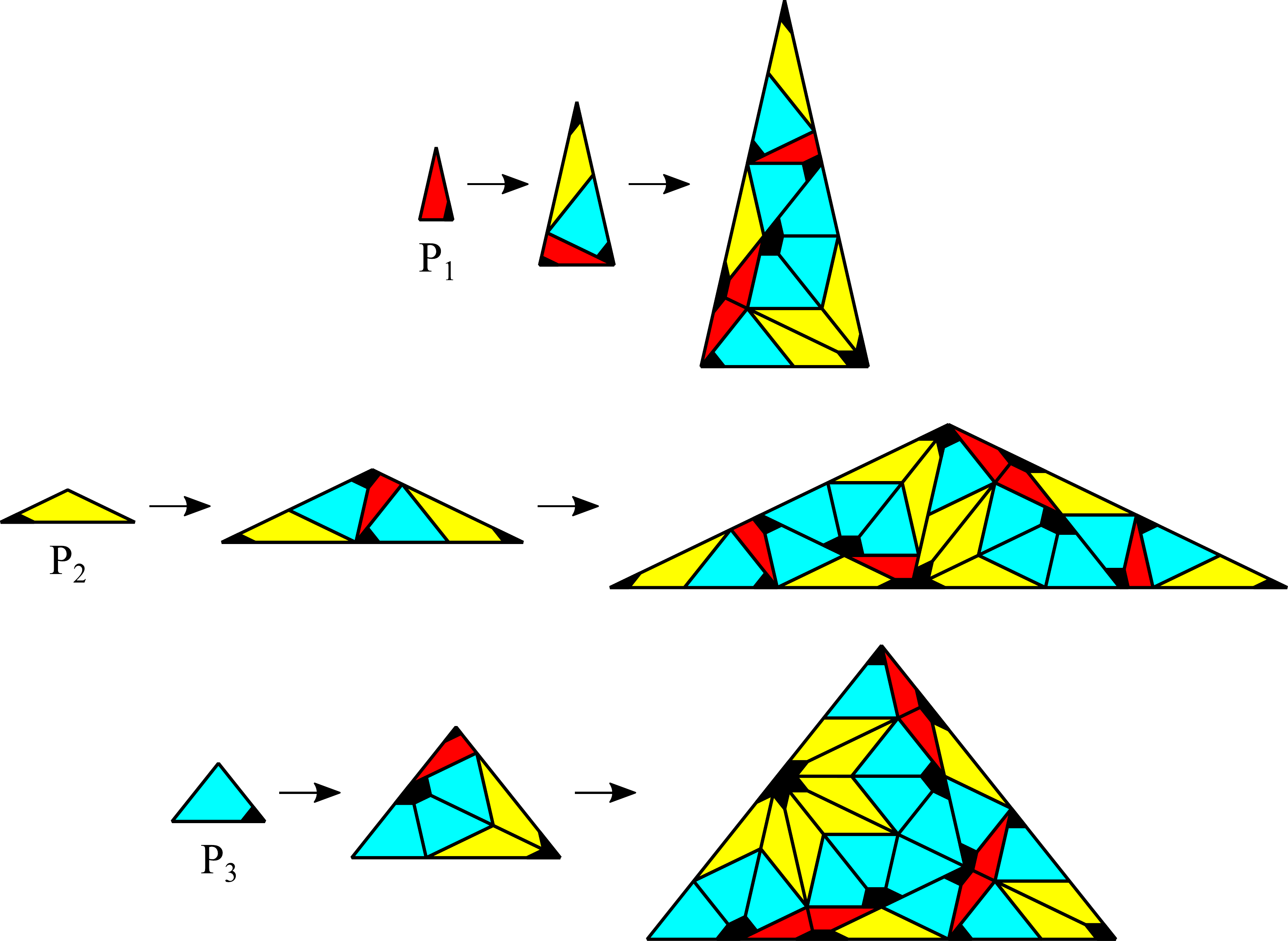}}
\end{center}\caption{\label{fig:CAST7maxdiag}CAST for the case $n=7$ with inflation multiplier
$\mu_{7,3}$}
\end{figure}

\begin{figure}
\begin{center}
\resizebox{0.9\textwidth}{!}{%

\includegraphics{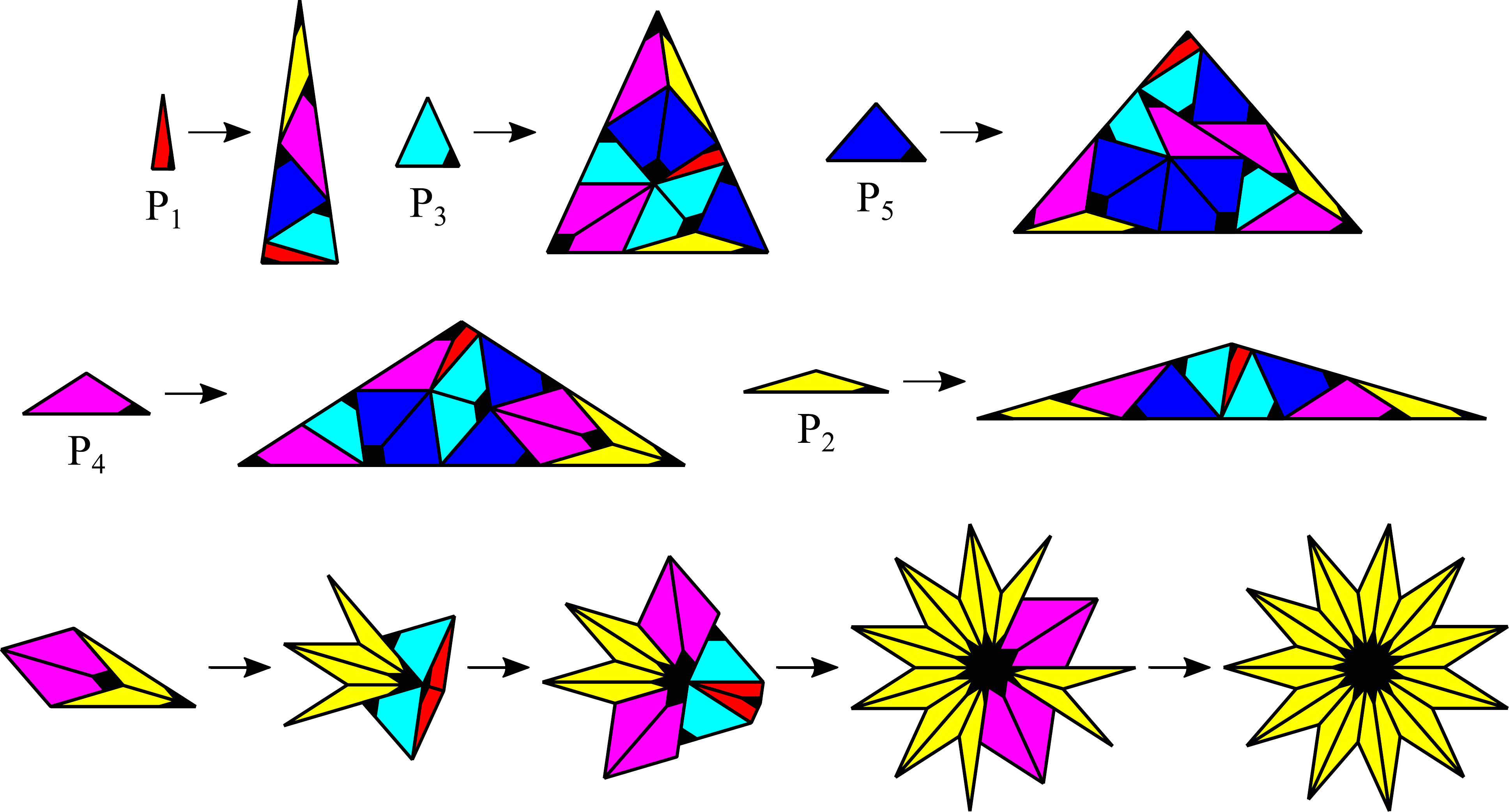}}
\end{center}\caption{\label{fig:CAST11maxdiag}CAST for the case $n=11$ with inflation
multiplier $\mu_{11,5}$\protect \\
One vertex star within prototile $P_{4}$ has been chosen to illustrate
the individual dihedral symmetry $D_{11}$.}
\end{figure}

\begin{figure}
\begin{center}
\resizebox{\textwidth}{!}{%

\includegraphics{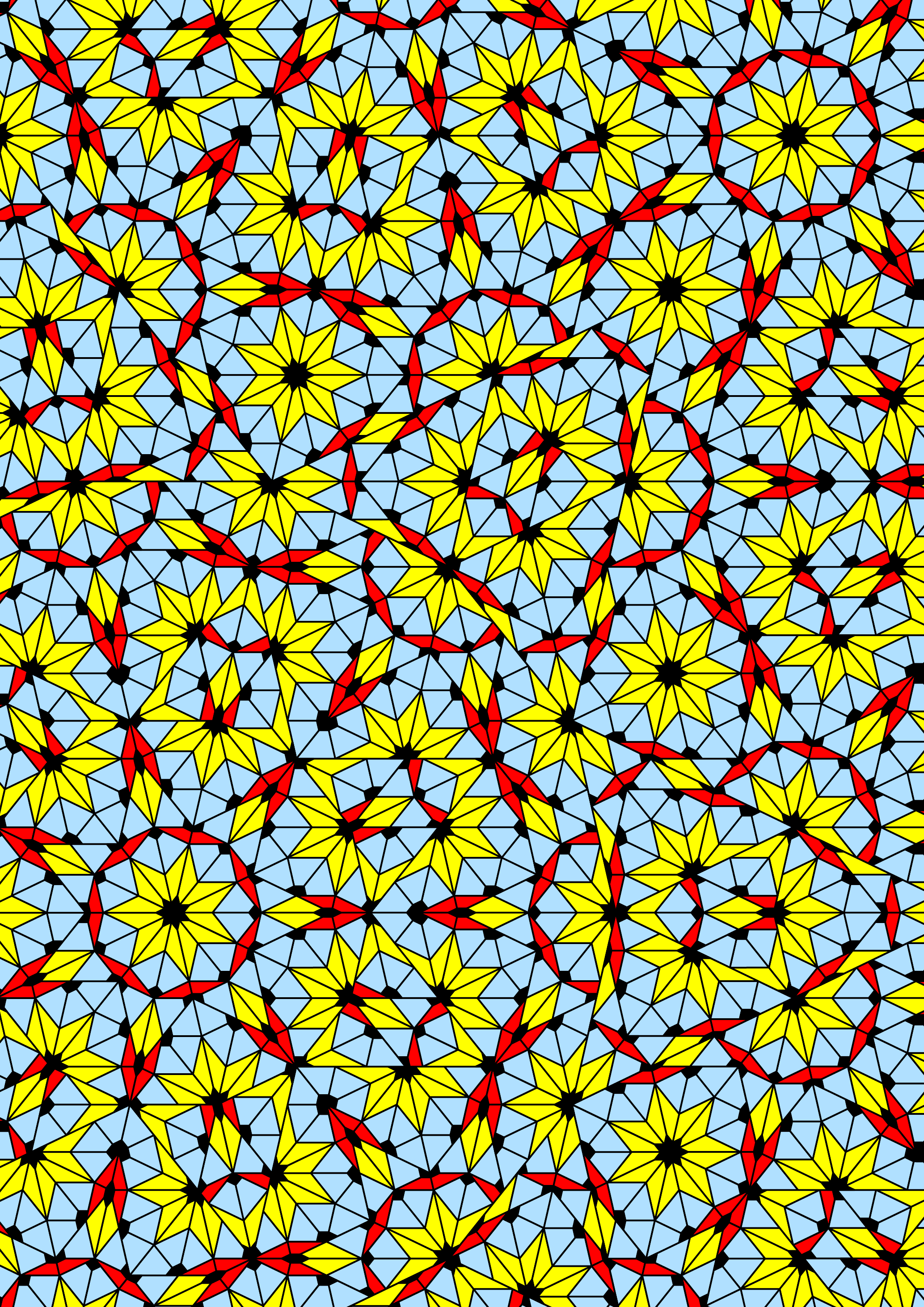}}
\end{center}\caption{\label{fig:CAST7maxdiag-2}CAST for the case $n=7$ with inflation
multiplier $\mu_{7,3}$}
\end{figure}

\clearpage{}

\section{\label{sec:Rhomic_CASTs_with_symmetric_edges_and_sunbstitution_rules}Rhombic
CASTs with Symmetric Edges and Substitution Rules\label{sub:Subsection-title-1-1}}

For many cases of CASTs with given $n$ and rhombic prototiles, the
minimal inflation multiplier can be described if some additional preconditions
are met. In the following section, we will focus on cases where all
edges of all substitution rules are congruent and the inner angles
of the rhomb prototiles are integer multiples of $\frac{\pi}{n}$.
\begin{defn}
\label{def:edge_defintion}“Edge” means here a segment of the boundary
of a supertile including the set of tiles which are crossed by it. 
\end{defn}
Furthermore we consider only cases where all rhombs of the edge are
bisected by the boundary of the supertile along one of their diagonals
as shown in Fig.~\ref{fig:edge-configuration}.

\begin{figure}[H]
\begin{center}
\resizebox{0.5\textwidth}{!}{%

\includegraphics{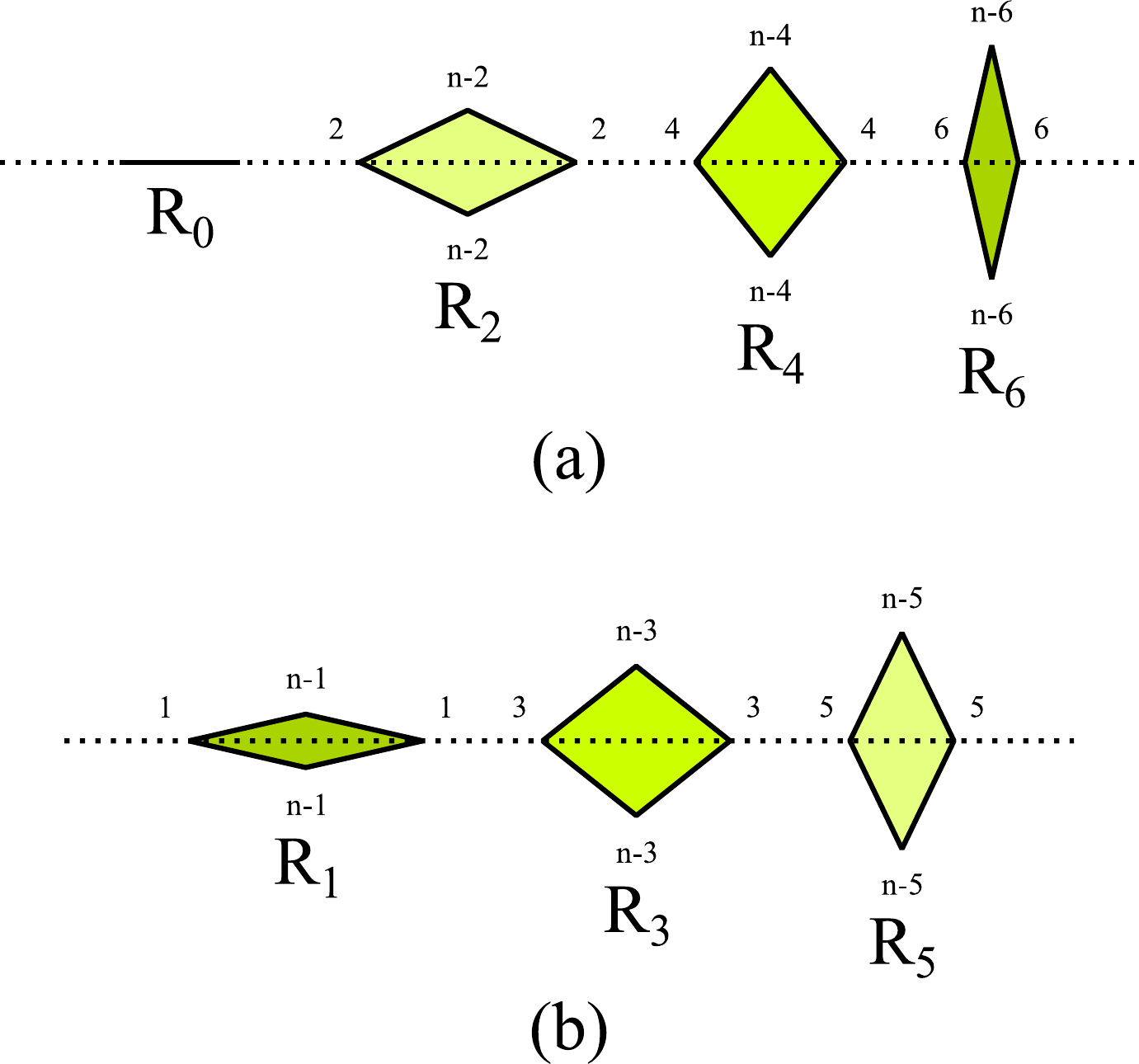}}
\end{center}\caption{\label{fig:edge-configuration}The ''Edges'' of a substitution rules
are defined as the boundaries of the supertile (dashed line) and the
rhombs bisected by it along one of their diagonals. The figure illustrates
how the rhombs can be placed accordingly for even edge configuration~(a)
and odd edge configuration~(b). The inner angles of the rhombs are
integer multiples of $\frac{\pi}{n}$ and are denoted by small numbers
near the tips. (Example $n=7$)}
\end{figure}

Despite these restrictions we still have several options left:
\begin{itemize}
\item There are two ways to place rhombs on the edge of substitution rules.
We recall that the inner angles of the rhombs are integer multiples
of $\frac{\pi}{n}$. We can place all rhombs on the edge so that the
inner angles either with even or odd multiples of $\frac{\pi}{n}$
are bisected by the boundary of the supertile. We will call these
two cases “even” and “odd edge configuration”, for details see Fig.~\ref{fig:edge-configuration}.
A “mixed” configuration is not allowed, because it would force the
existence of rhombs with inner angle equal to $\left(k+\frac{1}{2}\right)\frac{\pi}{n}$. 
\item We can choose the symmetry of the substitution rules and their edges.
Possible choices are dihedral symmetry $D_{1}$ and $D_{2}$. Edges
with dihedral symmetry $D_{1}$ can have the boundary of the supertile
or its perpendicular bisector as line of symmetry. The smallest nontrivial
solution for the latter case is the generalized Goodman-Strauss tiling
\citep{journals/dcg/Harriss05}. Since this example does not provide
individual dihedral symmetry $D_{n}$ or $D_{2n}$ in general, we
will focus on the other case.\\
Substitution rules of rhombs which appear on the edge of a substitution
rule are forced to have the appropriate dihedral symmetry $D_{1}$
as well. This is also true for substitution rules of prototiles which
lie on the diagonal i.e. a line of symmetry of a substitution rule.
The orientations of the edges have to be considered as well. These
three conditions may force the introduction of additional rhomb prototiles
and substitution rules. Additionally, the existence of edges with
orientations may require additional preconditions.\\
To avoid this problem, a general dihedral symmetry $D_{2}$ can be
chosen for the substitution rules and their edges.
\item Parity of the chosen $n$ may require different approaches in some
cases, similar to the example of the generalized Lançon-Billard tiling
in Section~\ref{sub:The-case-n-even} and Fig.~\ref{fig:LB-Tiling-Generalized}.
\end{itemize}
The options above can be combined without restrictions, so in total
we have eight cases:

\begin{table}[H]
\caption{\label{tab:Definition-of-cases}Definition of cases of rhombic CASTs}

\begin{center}
\resizebox{\textwidth}{!}{%

\begin{tabular}{|>{\centering}m{0.2\textwidth}|>{\centering}m{0.4\textwidth}|>{\centering}m{0.4\textwidth}|}
\hline 
 & Substitution rules and their edges\linebreak have at least dihedral
symmetry $D_{1}$ & Substitution rules and their edges\linebreak have dihedral symmetry
$D_{2}$\tabularnewline
\hline 
Even edge\linebreak configuration & Case 1a $(even\;n)$\linebreak Case 1b $(odd\;n)$ & Case 3a $(even\;n)$\linebreak Case 3b $(odd\;n)$\tabularnewline
\hline 
Odd edge\linebreak configuration & Case 2a $(even\;n)$\linebreak Case 2b $(odd\;n)$ & Case 4a $(even\;n)$\linebreak Case 4b $(odd\;n)$\tabularnewline
\hline 
\end{tabular}

}
\end{center}
\end{table}

Within this section, we will denote a rhomb with side length $d_{0}\equiv1$
and an inner angle $\frac{k\pi}{n}$, $n>k>0$ which is bisected by
the boundary of a supertile as $R_{k}$ and its diagonal which lies
on the boundary of a supertile as $d_{k}$. The line segments which
appear on the edges of substitution rules in the cases with even edge
configuration are denoted by $R_{0}$ and their length by $d_{\text{0}}$.
Under those conditions, we can write the inflation multiplier $\eta$
as sum of the $d_{k}$:
\begin{equation}
\eta=\sum_{i=0}^{\left\lfloor (n-1)/2\right\rfloor }\alpha_{2i}\;d_{2i}\;\;\;\;\;\;(\mathbf{\alpha}_{k}\in\mathbb{\mathbb{N}}_{0},\;cases\;1\;and\;3)\label{eq:rhomb_inflation_multiplier_by_rhomb_diagonals_1}
\end{equation}

\begin{equation}
\eta=\sum_{i=0}^{\left\lfloor n/2-1\right\rfloor }\alpha_{2i+1\;}d_{2i+1}\;\;\;\;\;\;(\alpha_{k}\in\mathbb{\mathbb{N}}_{0},\;cases\;2\;and\;4)\label{eq:rhomb_inflation_multiplier_by_rhomb_diagonals_2}
\end{equation}

The inflation multiplier $\eta$ can also be written as a sum of $2n$-th
roots of unity, where the roots occur in pairs $\zeta_{2n}^{i}+\overline{\zeta_{2n}^{i}}$
so that $\eta\in\mathbb{R}$:

\begin{equation}
\eta=\alpha_{0}+\sum_{i=0}^{\left\lfloor (n-1)/2\right\rfloor }\alpha_{2i}\;\left(\zeta_{2n}^{i}+\overline{\zeta_{2n}^{i}}\right)\;\;\;\;\;\;(\mathbf{\alpha}_{k}\in\mathbb{\mathbb{N}}_{0},\;\eta\in\mathbb{R},\;cases\;1\;and\;3)\label{eq:rhomb_inflation_multiplier_by_roots_of_unity_1}
\end{equation}

\begin{equation}
\eta=\sqrt{2+\zeta_{2n}^{1}+\overline{\zeta_{2n}^{1}}}\;\left(\alpha_{1}+\sum_{i=0}^{\left\lfloor n/2-1\right\rfloor }\alpha_{2i+1}\;\left(\zeta_{2n}^{i}+\overline{\zeta_{2n}^{i}}\right)\right)\;\;\;\;\;\;(\alpha_{k}\in\mathbb{\mathbb{N}}_{0},\;\eta\in\mathbb{R},\;cases\;2\;and\;4)\label{eq:rhomb_inflation_multiplier_by_roots_of_unity_2}
\end{equation}

We borrow a remark regarding worms from \citep{Frettloeh2013}: ``By
definition, a parallelogram has two pairs of parallel edges. In a
parallelogram tiling therefore there are natural lines of tiles linked
each sharing a parallel edge with the next. These lines are called
worms and were used by Conway in studying the Penrose tilings \citep{Grunbaum:1986:TP:19304}.
We will follow Conway to call these lines worms.'' 

Every rhomb $R_{k}$ with an inner angle $\frac{k\pi}{n}$, $n>k>0$
which is bisected by the boundary of a supertile work as entry-exit-node
of two worms. We also can say that such a rhomb on the edge ``reflects''
a worm back into the substitution rule. The line segments $R_{0}$
work as entry-exit-node of one worm only.

The Kannan-Soroker-Kenyon (KSK) criterion as defined in \citep{KannanS92,Kenyon93}
decides whether a polygon can be tiled by parallelograms. We use a
simplified phrasing for the criterion: All line segments $R_{0}$
and all inner line segments of rhombs $R_{k}$, $n>k>0$ on the edge
of the substitution rule serve as nodes. All corresponding nodes of
the substitution rules are connected by lines. The KSK criterion is
fulfilled if for every intersection of lines the inner angles between
the corresponding nodes are larger than $0$ and smaller than $\pi$.

We start with observations of the substitution rule for rhomb prototile
$R_{1}$ or $R_{n-1}$. In detail we will focus on those both edges
which enclose the inner angle equal $\frac{\pi}{n}$ and which we
will denote as ``corresponding edges''. We will denote the area
where the both edges meet as the ``tip'' of the substitution rule.
\begin{enumerate}
\item The rhombs on the edges must not overlap. For this reason, for the
tip of the substitution rule, only three configurations are possible
as shown in Fig.~\ref{fig:tip-of-rhomb-configuration}. Obviously,
a tip as shown in Fig.~\ref{fig:tip-of-rhomb-configuration}~(c)
is compliant to the cases 2 and 4 with odd edge configuration and
Fig.~\ref{fig:tip-of-rhomb-configuration}~(b) to cases 1 and 3
with even edge configuration. Fig.~\ref{fig:tip-of-rhomb-configuration}~(a)
requires the even edge configuration as well. Since all edges are
congruent, it must be the start and the end of the same edge, which
meet on that vertex. Since start and end of the edge are different,
it can not have dihedral symmetry $D_{2}$. For this reason, the tip
in Fig.~\ref{fig:tip-of-rhomb-configuration}~(a) is not compliant
to case 3.\\
\begin{figure}[H]
\begin{center}
\resizebox{0.5\textwidth}{!}{%

\includegraphics{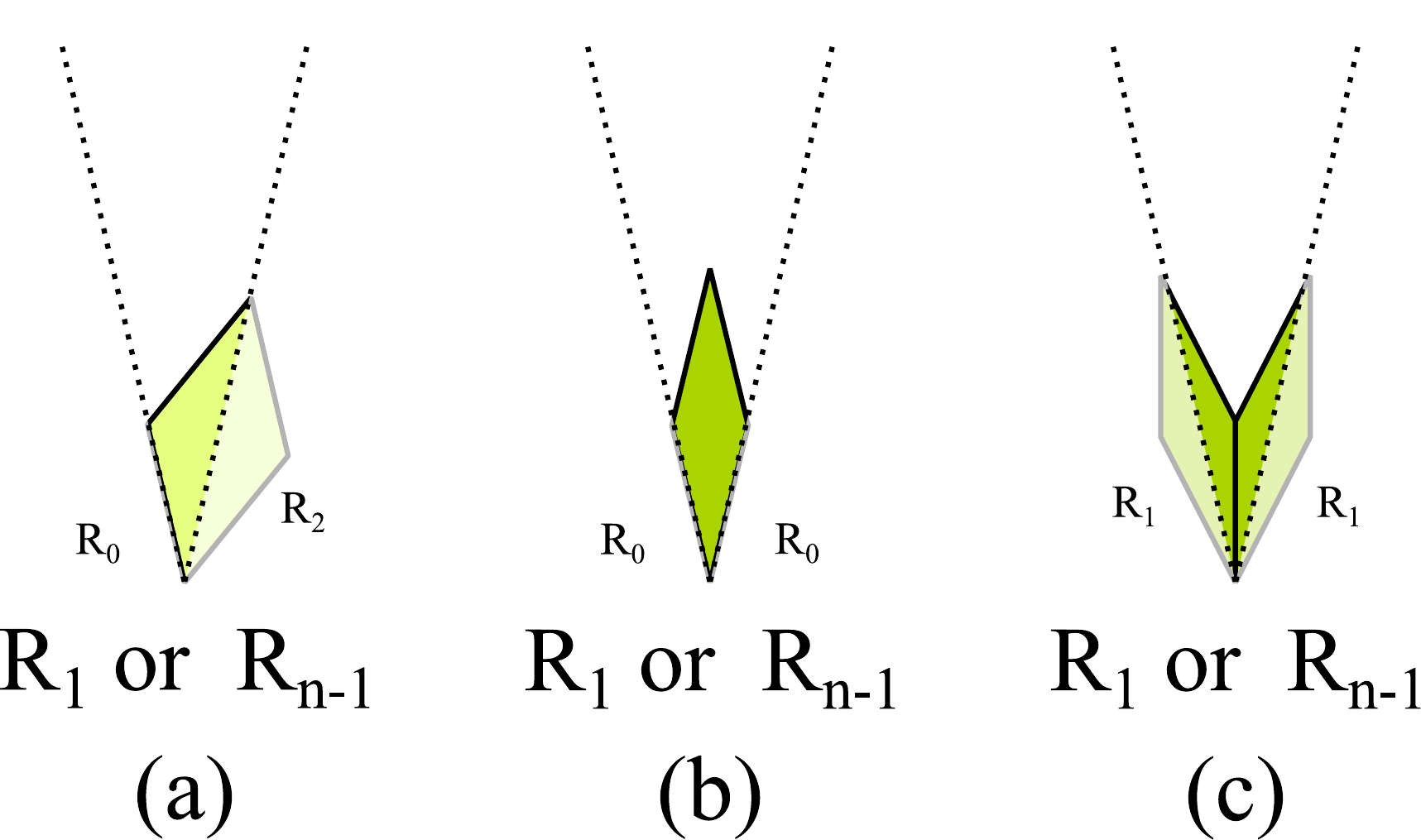}}
\end{center}\caption{\label{fig:tip-of-rhomb-configuration}Substitution rule for rhomb
$R_{1}$ or $R_{n-1}$: Possible configurations of the tip . (Example
$n=7$)}
\end{figure}

\item Any rhomb $R_{k}$ with $n-3\geq k\geq3$ on one edge implies the
existence of a rhomb $R_{k-2}$ on the corresponding edge. In turn,
rhomb $R_{k}$ on one edge is implied by a rhomb $R_{k+2}$ on the
corresponding edge or a rhomb $R_{k}$ on the opposite edge. An example
is shown in Fig.~\ref{fig:Rhomb-Rk-and-relatives}.\\
\begin{figure}[H]
\begin{center}
\resizebox{0.5\textwidth}{!}{%

\includegraphics{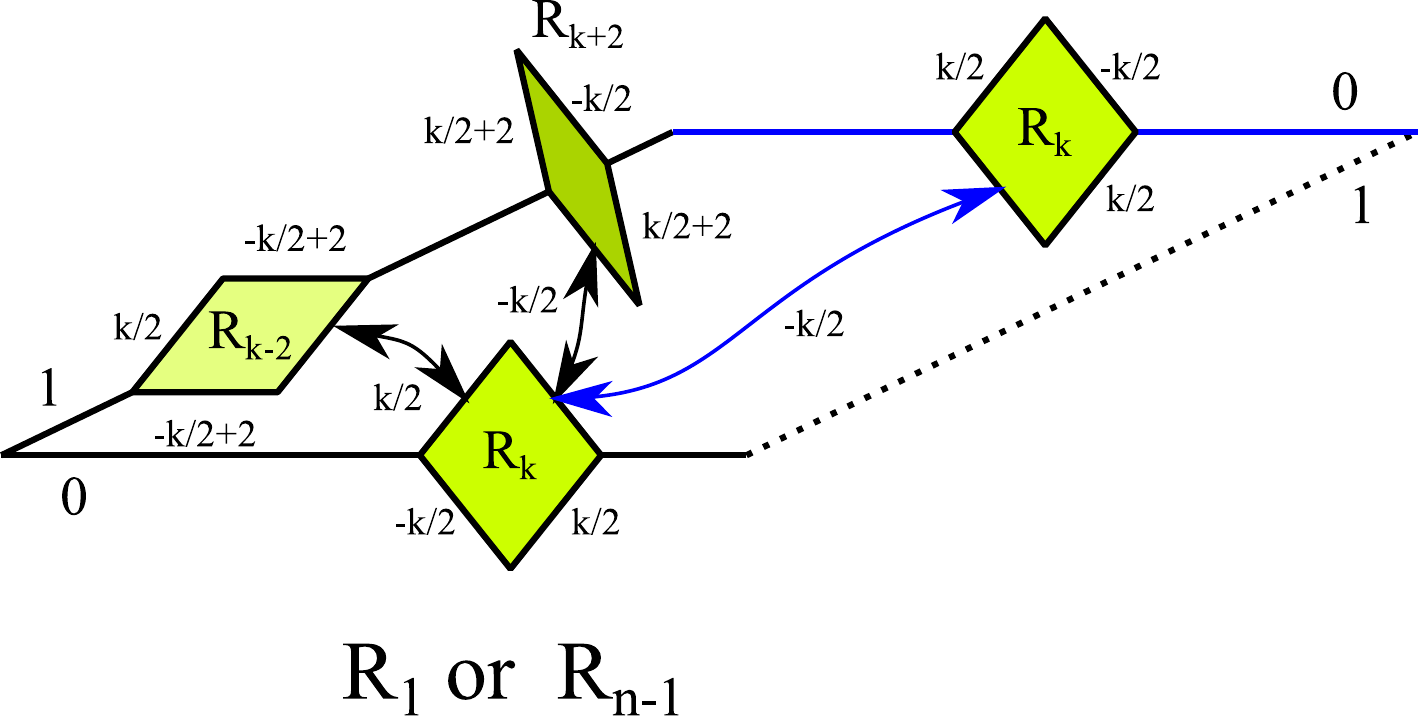}}
\end{center}\caption{\label{fig:Rhomb-Rk-and-relatives}Substitution rule for rhomb $R_{1}$
or $R_{n-1}$: Rhomb $R_{k}$ on the edge and its relatives at the
corresponding edge (black) and the opposite edge (blue). (Example
$n=7$, $k=4$)}
\end{figure}

\item Any rhomb $R_{2}$ on one edge implies the existence of a line segment
$R_{0}$ on the corresponding edge. In turn, rhomb $R_{2}$ on one
edge is implied by a rhomb $R_{4}$ on the corresponding edge or a
rhomb $R_{2}$ on the opposite edge.
\item Any rhomb $R_{1}$ on one edge implies the existence of a rhomb $R_{1}$
on the corresponding edge. In turn, rhomb $R_{1}$ on one edge is
implied by a rhomb $R_{3}$ on the corresponding edge or a rhomb $R_{1}$
on the opposite edge.
\item Any line segment $R_{0}$ on one edge is implied by a rhomb $R_{2}$
on the corresponding edge or a line segment $R_{0}$ on the opposite
edge.
\item Any rhomb $R_{n-2}$ on one edge implies the existence of a rhomb
$R_{n-4}$ on the corresponding edge. In turn, rhomb $R_{n-2}$ on
one edge is implied by a rhomb $R_{n-2}$ on the opposite edge. (Rhomb
$R_{n}$ does not exist, because the inner angle would be zero.)
\item Any rhomb $R_{n-1}$ on one edge implies the existence of a rhomb
$R_{n-3}$ on the corresponding edge. In turn, rhomb $R_{n-1}$ on
one edge is implied by a rhomb $R_{n-1}$ on the opposite edge. (Rhomb
$R_{n+1}$ does not exist, it would have an inner angle greater than
$\pi$ or smaller the $0$.)
\item If for a rhomb $R_{k},\;n>k>0$ on one edge two related elements (rhomb
or line segment) $R_{|k-2|}$ and $R_{k+2}$ exist on the corresponding
edge, $R_{|k-2|}$ is closer to the tip than $R_{k+2}$
\end{enumerate}
Since all edges are congruent we can derive the following inequalities
for $\mathbf{\alpha}_{k}$:

\begin{equation}
\alpha_{0}\geq\alpha_{2}\geq\alpha_{4}\geq...\alpha_{2\left\lfloor (n-1)/2\right\rfloor }\geq0\;\;\;\;\;\;(\mathbf{\alpha}_{k}\in\mathbb{\mathbb{N}}_{0},\;cases\;1\;and\;3)\label{eq:rhomb_inflation_multiplier_by_rhombs_1}
\end{equation}

\begin{equation}
\alpha_{1}\geq\alpha_{3}\geq\alpha_{5}\geq...\alpha_{2\left\lfloor n/2\right\rfloor -1}\geq0\;\;\;\;\;\;(\mathbf{\alpha}_{k}\in\mathbb{\mathbb{N}}_{0},\;cases\;2\;and\;4)\label{eq:rhomb_inflation_multiplier_by_rhombs_2}
\end{equation}

Because of Equations~\eqref{eq:diagonal-to-sum-of-n-roots-of-unity}
and~\eqref{eq:diagonal-to-sum-of-n-roots-of-unity-2}, the inflation
multiplier can also be written as a sum of diagonals $\mu_{n,k}$:

\begin{equation}
\eta=\sum_{k=1}^{\left\lfloor n/2\right\rfloor }\beta_{k}\mu_{n,k}\;\;\;\;\;\;(\mathbf{\beta}_{k}\in\mathbb{\mathbb{N}}_{0},\;cases\;1\;and\;3)\label{eq:rhomb_inflation_multiplier_by_diagonals_1}
\end{equation}

\begin{equation}
\eta=\sqrt{\mu_{n,2}+2}\sum_{k=1}^{\left\lfloor n/2\right\rfloor }\beta_{k}\mu_{n,k}\;\;\;\;\;\;(\beta_{k}\in\mathbb{\mathbb{N}}_{0},\;cases\;2\;and\;4)\label{eq:rhomb_inflation_multiplier_by_diagonals_2}
\end{equation}

Equations~\eqref{eq:rhomb_inflation_multiplier_by_rhomb_diagonals_1}~-~\eqref{eq:rhomb_inflation_multiplier_by_roots_of_unity_2}
give no hint on which $\alpha_{k}=0$, so that a rhombic CAST still
can exist. For this reason, we have to extend our observations to
the substitution rules of rhomb $R_{\left\lfloor n/2\right\rfloor }$.

For case 1b and 3b:
\begin{itemize}
\item Any line segment $R_{0}$ on the edge implies the existence of a rhomb
$R_{n-1}$ on the correspondent edge or a line segment $R_{0}$ on
the opposite edge. As shown in Fig.~\ref{fig:Rn-3_must_exist}~(a)
and~(b), the existence of rhomb $R_{n-1}$ on the edge is not required
to meet the KSK criterion.
\item Any rhomb $R_{2}$ on the edge implies the existence of a rhomb $R_{n-3}$
on the correspondent edge or a rhomb $R_{2}$ on the opposite edge.
As shown in Fig.~\ref{fig:Rn-3_must_exist}~(c) and~(d), the KSK
criterion is only met if at least one $R_{n-3}$ exists on the edge.
\end{itemize}
\begin{equation}
\alpha_{n-3}>\alpha_{n-1}\geq0\;\;\;\;\;\;(cases\;1b\;and\;3b)\label{eq:case-1b-alpha-1}
\end{equation}

For case 1a and 3a:
\begin{itemize}
\item The line segment $R_{0}$ on the edge implies the existence of a line
segment $R_{0}$ on the opposite edge only (rhomb $R_{n}$ does not
exist).
\item Any rhomb $R_{2}$ on the edge implies the existence of a rhomb $R_{n-2}$
on the correspondent edge or a rhomb $R_{2}$ on the opposite edge.
So the KSK criterion is only met if at least one $R_{n-2}$ exists
on the edge.
\end{itemize}
\begin{equation}
\alpha_{n-2}\geq1\;\;\;\;\;\;(cases\;1a\;and\;3a)
\end{equation}

For case 2b and 4b:
\begin{itemize}
\item Any rhomb $R_{1}$ on the edge implies the existence of a rhomb $R_{n-2}$
on the correspondent edge or a rhomb $R_{1}$ on the opposite edge.
So the KSK criterion is only met if at least one $R_{n-2}$ exists
on the edge.
\end{itemize}
\begin{equation}
\alpha_{n-2}\geq1\;\;\;\;\;\;(cases\;2b\;and\;4b)
\end{equation}

For case 2a and 4a:
\begin{itemize}
\item Any rhomb $R_{1}$ on the edge implies the existence of a rhomb $R_{n-1}$
on the correspondent edge or a rhomb $R_{1}$ on the opposite edge.
So the KSK criterion is only met if at least one $R_{n-1}$ exists
on the edge.
\end{itemize}
\begin{equation}
\alpha_{n-1}\geq1\;\;\;\;\;\;(cases\;2a\;and\;4a)
\end{equation}

\begin{figure}
\begin{center}
\resizebox{0.8\textwidth}{!}{%

\includegraphics{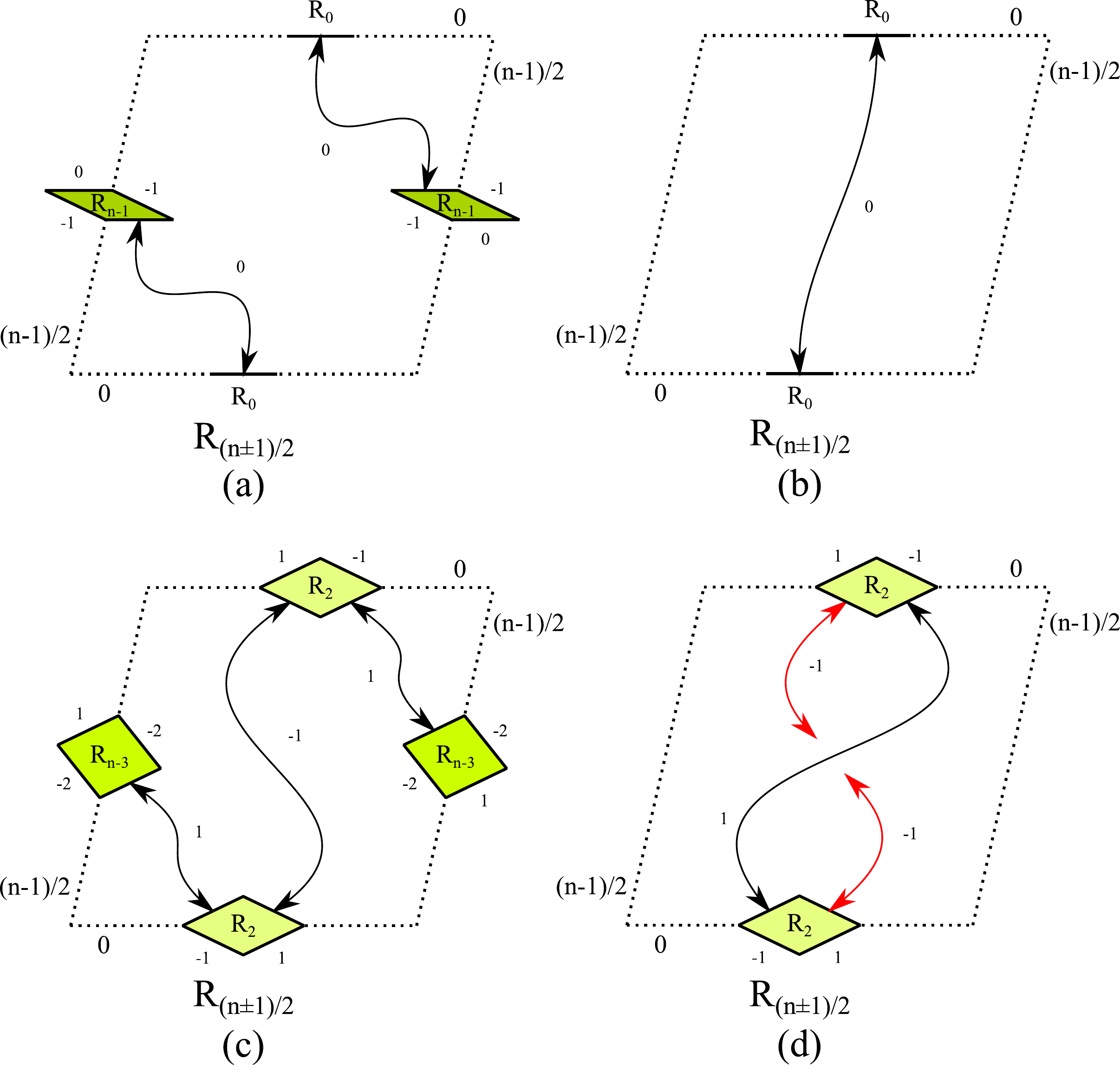}}
\end{center}\caption{\label{fig:Rn-3_must_exist}Substitution rule for rhomb $R_{(n\pm1)/2}$:
The KSK criterion requires rhomb $R_{n-3}$ but not rhomb $R_{n-1}$
on the edge. (Case 1b, example $n=7$)}
\end{figure}

In the next part we will determine the minimal inflation multiplier
$\eta_{min}$ of a rhombic CAST. We will discuss only case 1 in detail
to show the general concept.

We continue our observations of the substitution rule $R_{1}$ or
$R_{n-1}$. There are four possible combinations of the corresponding
edge orientations as shown in Fig.~\ref{fig:tip-of-rhomb-configuration-edge}.
\\
\begin{figure}[H]
\begin{center}
\resizebox{0.7\textwidth}{!}{%

\includegraphics{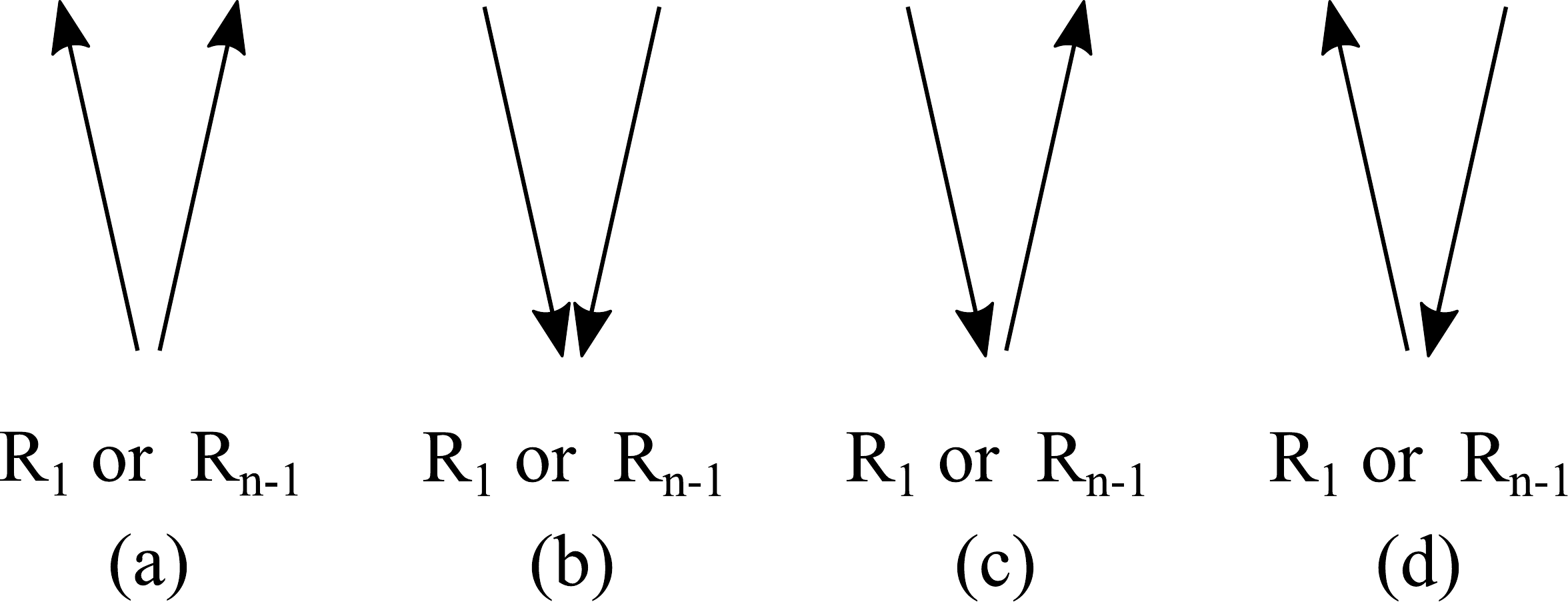}}
\end{center}\caption{\label{fig:tip-of-rhomb-configuration-edge}Substitution rule for
rhomb $R_{1}$ or $R_{n-1}$: Possible configurations of the edges
orientations. (Example $n=7$)}
\end{figure}
We recall Equations~\eqref{eq:rhomb_inflation_multiplier_by_rhombs_1}
and~\eqref{eq:case-1b-alpha-1}. To get a small inflation multiplier,
the $\alpha_{k}$ with $even\;k$ have to be chosen as small as possible.
We use the KSK criterion to check whether substitution rules can exist
for different orientations of the corresponding edges and equal amounts
of rhombs $R_{k}$ and $R_{k+2}$ so that $\alpha_{k}=\alpha_{k+2}$.
The checks are illustrated in Fig.~\ref{fig:tip-of-rhomb-configuration-edge-orientation}.
Actually, the figure just shows the case $\alpha_{k}=\alpha_{k+2}=1$,
while the result is true for $\alpha_{k}=\alpha_{k+2}>1$ as well.

The results for $\alpha_{0}=\alpha_{2}$ in Fig.~\ref{fig:tip-of-rhomb-configuration-edge-orientation}~(a),~(b),~(c)
show that the KSK criterion is fulfilled for more than one combination.

The results for $\alpha_{k}=\alpha_{k+2};\;k>0$ in Fig.~\ref{fig:tip-of-rhomb-configuration-edge-orientation}~(d),~(e),~(f)
show that the KSK criterion is fulfilled for one orientation only.

\begin{figure}
\begin{center}
\resizebox{\textwidth}{!}{%

\includegraphics{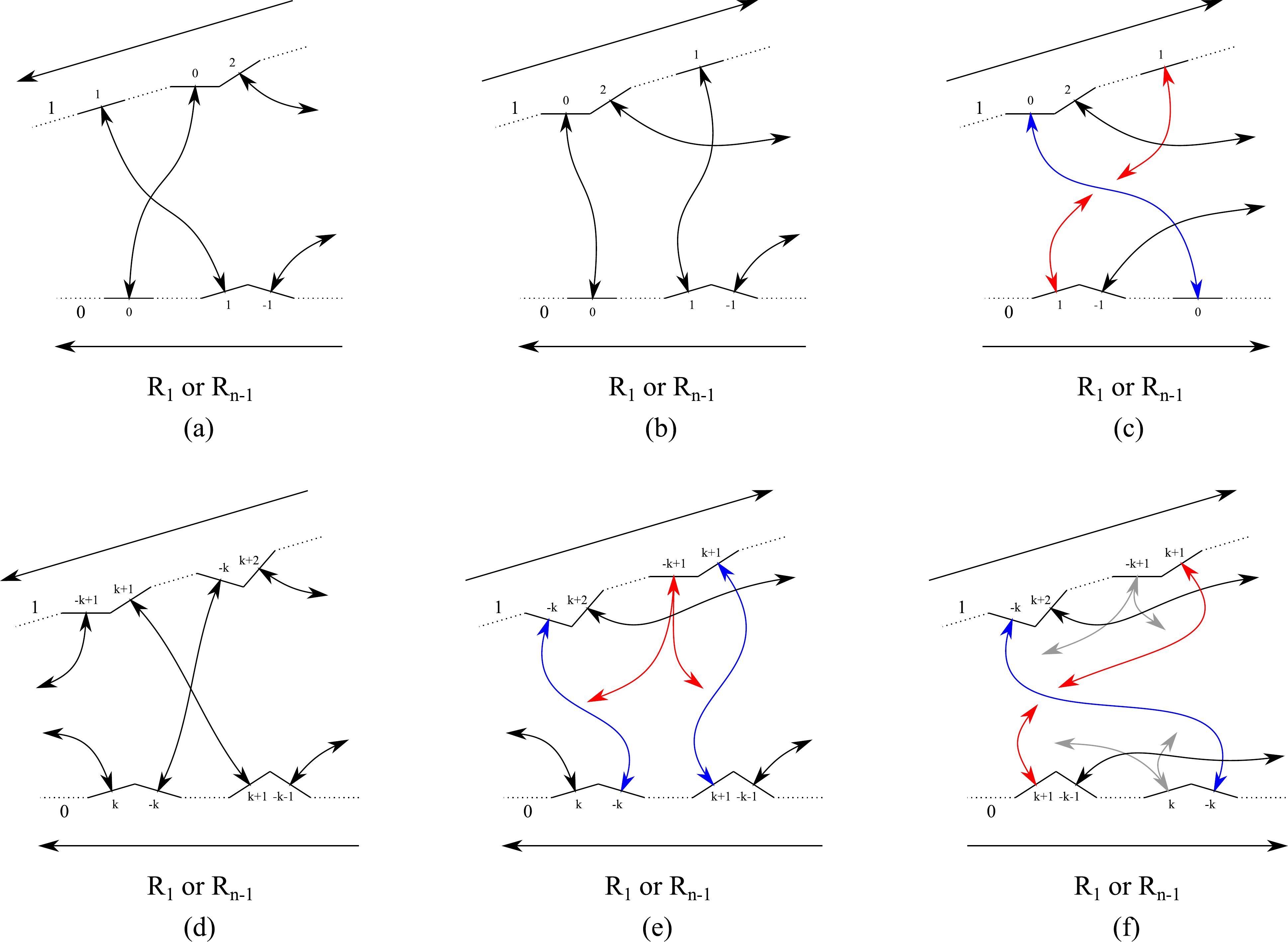}}
\end{center}\caption{\label{fig:tip-of-rhomb-configuration-edge-orientation}Substitution
rule for rhomb $R_{1}$ or $R_{n-1}$: KSK criterion for corresponding
edges with different orientations with $\alpha_{0}=\alpha_{2}$ and
$\alpha_{2k}=\alpha_{2k+2};\;k>1$.}
\end{figure}

At this point we have to introduce an additional condition. For cases
1 and 2, we consider only rhombic CASTs whose prototiles $R_{1}$
or $R_{n-1}$ yield at least two different orientations of the corresponding
edges.
\begin{rem}
The author is only aware of one example of a rhombic CAST whose prototiles
$R_{1}$ or $R_{n-1}$ show only one orientation of the corresponding
edges, namely the Ammann-Beenker tiling with $n=4$. For $n>4$, the
existence of such rhombic CASTs seem to be unlikely. However, a proof
to rule out this possibility is not available yet.
\end{rem}
With the results and conditions above we can conclude:

\begin{equation}
\alpha_{0}\geq\alpha_{2}\;\;\;\;\;\;(case\;1)\label{eq:case-1b-alpha-2}
\end{equation}

\begin{equation}
\alpha_{2k}>\alpha_{2k+2};\;k>0\;\;\;\;\;\;(case\;1)\label{eq:case-1b-alpha-3}
\end{equation}

With Equations~\eqref{eq:diagonal-to-sum-of-n-roots-of-unity-2},
\eqref{eq:rhomb_inflation_multiplier_by_roots_of_unity_1}, \eqref{eq:rhomb_inflation_multiplier_by_rhombs_1}
and~\eqref{eq:rhomb_inflation_multiplier_by_diagonals_1}, we can
describe the minimal inflation multiplier:

\begin{equation}
\eta{}_{min}=\sum_{i=1}^{\left\lfloor n/2-1\right\rfloor }\left(\mu_{n,i}+\mu_{n,i+1}\right)\;\;\;\;\;\;(case\;1)
\end{equation}

For case 1b with $odd\;n$, we can use the same concept of orientations
as shown in \citep[Fig. 2]{Nischke1996} which leads to rhombs with
orientations as shown in Fig.~\ref{fig:Case1b-rhomb-edge-configuration}.
As a result, the orientations of the edges are globally defined. In
detail, the edges which enclose an inner angle $\frac{\pi}{n}$ have
orientations as shown in Fig.~\ref{fig:tip-of-rhomb-configuration-edge}~(c)
and~(d) and a tip as shown in Fig.~\ref{fig:tip-of-rhomb-configuration}~(a).

For case 1a with $even\;n$ this simplification can not be used. The
edges which enclose an inner angle $\frac{\pi}{n}$ have not only
orientations as shown in Fig.~\ref{fig:tip-of-rhomb-configuration-edge}~(c)
and~(d). Depending on how exactly the orientations are defined one
of the orientations in Fig.~\ref{fig:tip-of-rhomb-configuration-edge}~(a)
or~(b) may appear. 

\begin{figure}[H]
\begin{center}
\resizebox{\textwidth}{!}{%

\includegraphics[bb=0bp 0bp 747bp 380bp]{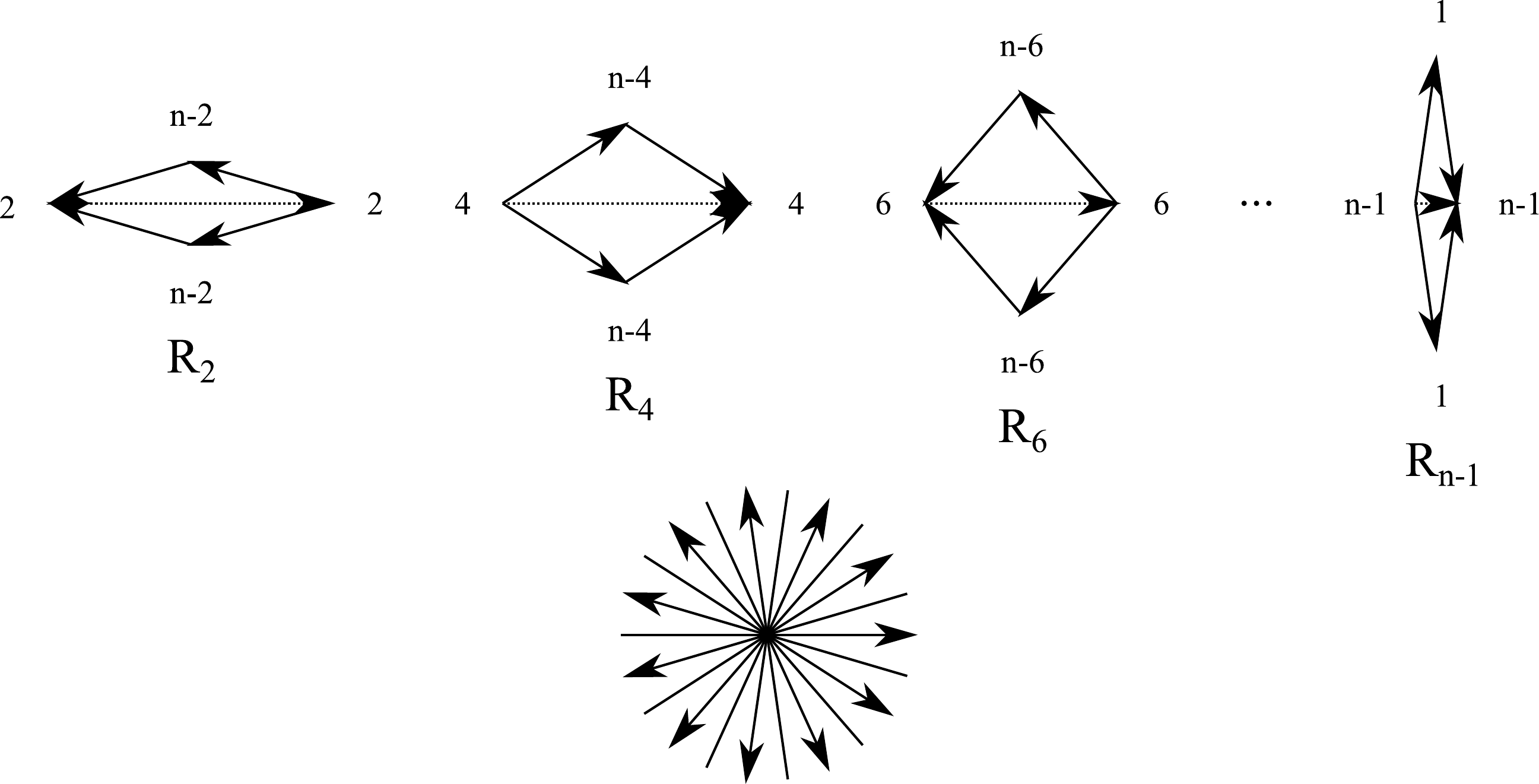}}
\end{center}\caption{\label{fig:Case1b-rhomb-edge-configuration}Orientations of edges
(and orientations of lines of symmetry) for rhomb prototiles in case
1b. (Example $n=11$)}
\end{figure}
Similar approaches are possible for cases~2, 3 and~4. For cases~3
and~4 the symmetry conditions of the substitution rules and its edges
enforce $\alpha_{k}\;even,\;k>1$, just $\alpha_{0}$ is not required
to be even.

With the minimal inflation multiplier $\eta{}_{min}$ as shown in
Table~\ref{tab:Rhomb-CAST-edgeconf-min-infl-mult-case1}~-~\ref{tab:Rhomb-CAST-edgeconf-min-infl-mult-case4},
we can start the search of rhombic CASTs by trial and error. The minimal
rhomb edge sequences therein were also obtained in this way, so additional
solutions with varied sequences obtained by permutation may exist.
Examples for all eight cases are shown in Fig.~\ref{fig:Rhomb-CAST-Case1}~-~\ref{fig:Rhomb-CAST-Case4}.
\begin{conjecture}
\label{conj:Rhomb-Casts-for-all-cases}For all cases as listed in
Table~\ref{tab:Definition-of-cases}, there exist rhombic CASTs for
any $n$ with edge configuration and inflation multiplier $\eta_{min}$
as shown in Tables~\ref{tab:Rhomb-CAST-edgeconf-min-infl-mult-case1}~-~\ref{tab:Rhomb-CAST-edgeconf-min-infl-mult-case4}
which yield individual dihedral symmetry $D_{n}$ or $D_{2n}$.
\end{conjecture}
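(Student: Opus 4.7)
The plan is to prove Conjecture~\ref{conj:Rhomb-Casts-for-all-cases} by explicit construction for each $n\geq 4$ and each of the eight cases, combining the edge-configuration analysis developed in this section with the ``Gaps to Prototiles'' algorithm of Section~\ref{sec:Gaps_to_Prototiles_Algorithm}. First I would pin down the minimal coefficients $\alpha_k$ in Equations~\eqref{eq:rhomb_inflation_multiplier_by_rhomb_diagonals_1} and~\eqref{eq:rhomb_inflation_multiplier_by_rhomb_diagonals_2}: the inequalities~\eqref{eq:rhomb_inflation_multiplier_by_rhombs_1},~\eqref{eq:rhomb_inflation_multiplier_by_rhombs_2},~\eqref{eq:case-1b-alpha-1},~\eqref{eq:case-1b-alpha-2} and~\eqref{eq:case-1b-alpha-3}, together with the case-specific lower bounds on $\alpha_{n-1}$ or $\alpha_{n-2}$ derived from the substitution rule of $R_{\lfloor n/2 \rfloor}$, determine (up to permutation) a minimal edge sequence realising $\eta_{min}$ as tabulated in Tables~\ref{tab:Rhomb-CAST-edgeconf-min-infl-mult-case1}--\ref{tab:Rhomb-CAST-edgeconf-min-infl-mult-case4}. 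I would then verify that this sequence satisfies the KSK criterion along the entire boundary of each prospective supertile, which is a direct angle check at every pair of opposing entry-exit nodes.

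Next I would construct the interior of each substitution rule. The boundary polygon of the supertile for prototile $R_k$ is prescribed by the minimal edge sequence from Step~1, so the problem reduces to filling the resulting polygon with rhombs whose inner angles are integer multiples of $\pi/n$. For prototiles lying on an axis of symmetry (notably $R_{\lfloor n/2 \rfloor}$, and in cases~3 and~4 every prototile), I would impose dihedral symmetry $D_1$ or $D_2$ on the substitution rule from the outset and apply the ``Gaps to Prototiles'' algorithm of Section~\ref{sec:Gaps_to_Prototiles_Algorithm} to the remaining symmetric gaps. The propagation rules relating a rhomb $R_k$ on one edge to rhombs $R_{|k-2|}$ and $R_{k+2}$ on the corresponding edge give a layer-by-layer recursion from the boundary inwards, and the KSK criterion propagates from the outer boundary to each interior sub-gap.

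To verify individual dihedral symmetry I would exhibit a single patch with the full symmetry $D_n$ (or $D_{2n}$) inside some supertile of a prototile on a line of symmetry, in the spirit of the vertex star within $R_4$ displayed in Figure~\ref{fig:CAST11maxdiag}. Iterating the substitution rules on that patch then produces an infinite nested family of patches of arbitrary diameter with the same symmetry, which is exactly individual dihedral symmetry as defined in the introduction. The distinction between $D_n$ and $D_{2n}$ is controlled by whether the minimal inner angle $\pi/n$ appears on the axis of symmetry (cases~3a and~4a, using the globally consistent edge orientations of Figure~\ref{fig:Case1b-rhomb-edge-configuration}), and aperiodicity then follows from Theorem~\ref{thm:CASTs-Aperiodic} once the hypothesis on rotational orientations of a pair of prototile copies is verified, which is automatic for any construction in which two copies of $R_k$ occur with differently oriented edges.

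The hard part is Step~2: there is no a~priori guarantee that the ``Gaps to Prototiles'' algorithm terminates with a consistent rhomb filling for every $n$ and every case, and the parity distinctions between odd and even $n$ may force structurally different constructions, as already seen for the generalised Lançon--Billard tiling in Section~\ref{sub:The-case-n-even}. A uniform argument would probably proceed by double induction on $n$ and on the distance from the boundary of the supertile, using the tip configurations of Figure~\ref{fig:tip-of-rhomb-configuration-edge-orientation} as base cases. I expect the eight cases of Table~\ref{tab:Definition-of-cases} to collapse into two or three genuinely distinct induction patterns rather than eight independent constructions, with cases~3 and~4 reducing to cases~1 and~2 by symmetrising the substitution rules obtained there.
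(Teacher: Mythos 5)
The statement you are trying to prove is stated in the paper as a \emph{conjecture} and is explicitly left open: immediately after stating it, the author writes that a general proof ``is subject to further research'' and that only case~4b is settled, via the independent work of Kari and Rissanen for odd $n$. The paper's own evidence consists of examples for small $n$ found by trial and error (Figures~\ref{fig:Rhomb-CAST-Case1}--\ref{fig:Rhomb-CAST-Case4}), together with necessary conditions on the edge sequences derived from the KSK criterion. So there is no proof in the paper to compare yours against; the question is whether your outline closes the gap, and it does not.

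The decisive missing step is the one you yourself flag as ``the hard part.'' The KSK criterion only tells you that a fixed polygon admits \emph{some} tiling by parallelograms; it does not give you a substitution system, i.e.\ a \emph{finite} set of rhombic prototiles each of whose inflated copies can be dissected into tiles from that same set, with all edges congruent and carrying the required $D_{1}$ or $D_{2}$ symmetries simultaneously for every prototile. Your appeal to the ``Gaps to Prototiles'' algorithm inherits exactly this problem: the paper states that it is not known whether that algorithm terminates with a finite number of prototiles, and when it does terminate the new prototiles it introduces need not be rhombs at all, which would take you outside the class the conjecture is about. Two further points are weaker than you present them. First, the minimality of $\eta_{min}$ in cases~1 and~2 rests on the additional unproven hypothesis that for $n>4$ no rhombic CAST has a prototile $R_{1}$ exhibiting only one orientation of the corresponding edges; the paper says explicitly that a proof ruling this out is not yet available, and the tables themselves are described as obtained by trial and error, with other sequences possibly existing. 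Second, the existence of a patch with full $D_{n}$ or $D_{2n}$ symmetry inside some supertile is verified in the paper only for specific small $n$ by inspection, so your Step~3 presupposes a construction that Step~2 has not delivered. Your proposed reduction of the eight cases to two or three induction patterns is a plausible research program, but as written the proposal is a plan, not a proof.
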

A general proof of Conjecture~\ref{conj:Rhomb-Casts-for-all-cases}
is subject to further research. However, a proof for case 4b already
exists. The results in \citep{2015arXiv151201402K,Kari2016} for $odd\;n$
match very well with our results in case 4b. The case $even\;n$ in
the same publication is almost identical to our case 3a, up to a difference
of one line segment $R_{0}$ in the center of the edge of the substitution
rules.
\begin{rem}
Case 1a with $n=4$ is equivalent to the Ammann-Beenker tiling \citep{Beenker,Socolar1989,journals/dcg/AmmannGS92,HFonl}. 
\end{rem}

\begin{rem}
Socolar's 7-fold tiling \citep[credited to J. Socolar]{HFonl} can
be derived from case 1b with $n=7$ with an modified minimal rhomb
edge sequence.
\end{rem}

\begin{rem}
Two substitution steps of the Penrose rhomb tiling are equivalent
to one substitution step in case 1b with $n=5$. 
\end{rem}
\begin{table}[H]
\caption{\label{tab:Rhomb-CAST-edgeconf-min-infl-mult-case1}Rhombic CAST substitution
rule edge configuration for case 1. }

\begin{center}
\resizebox{\textwidth}{!}{%

\begin{tabular}{|c|c|l|}
\hline 
$n$ & minimal rhomb edge sequence & minimal inflation multiplier $\eta_{min}$\tabularnewline
\hline 
\hline 
$4,5$ & $\;\;\;\;\;\;\;\;\;\;\;\;\;\;\;\;\;\;\;\;\;\;\;\;\;\overline{0-2}$ & $\mu_{n,2}+1$\tabularnewline
\hline 
$6,7$ & $\;\;\;\;\;\;\:\overline{0-2}-4-\overline{0-2}$ & $\mu_{n,3}+2\mu_{n,2}+1$\tabularnewline
\hline 
$8,9$ & $\;\;\;\;\;\;\;\;\;\;\;\;\;\;\;\;\;\;\;\;\;\;\;\;\;\;\;\;\;\;\;\overline{0-2}-4-\overline{0-2}-6-4-\overline{0-2}$ & $\mu_{n,4}+2\mu_{n,3}+2\mu_{n,2}+1$\tabularnewline
\hline 
$10,11$ & $\overline{0-2}-4-6-8-\overline{0-2}-4-\overline{0-2}-6-4-\overline{0-2}$ & $\mu_{n,5}+2\mu_{n,4}+2\mu_{n,3}+2\mu_{n,2}+1$\tabularnewline
\hline 
$...$ & $...$ & $...$\tabularnewline
\hline 
\end{tabular}

}
\end{center}
\end{table}

\begin{table}[H]
\caption{\label{tab:Rhomb-CAST-edgeconf-min-infl-mult-case2}Rhombic CAST substitution
rule edge configuration for case 2}

\begin{center}
\resizebox{\textwidth}{!}{%

\begin{tabular}{|c|c|l|}
\hline 
$n$ & minimal rhomb edge sequence & minimal inflation multiplier $\eta_{min}$\tabularnewline
\hline 
\hline 
$4,5$ & $1-\,\,\,\,\,\,\,\,\,\,\,\,\,\,\,\,\,\,\,\,\,\,\,\,\,\,\,\,\,\,\,\,\,\,\,\,\,\,\,\,\,\,\,\,\,\,\,\,\,\,\,\,\,\,\:\:\:\:\:\:3\:\:\:\:\:\:\,\,\,\,\,\,\,\,\,\,\,\,\,\,\,\,\,\,\,\,\,\,\,\,\,\,\,\,\,\,\,\,\,\,\,\,\,\,\,\,\,\,\,\,\,\,\,\,\,\,\,\,\,\,-1$ & $\sqrt{\mu_{n,2}+2}\;\left(\mu_{n,2}+2\right)$\tabularnewline
\hline 
$6,7$ & $1-3-\,\,\,\,\,\,\,\,\,\,\,\,\,\,\,\,\,\,\,\,\,\,\,\,\,\,\,\,\,\,\,\,\,\,\,\,\,\,\,\,\,\:\:\:\:1-5\:\:\:\:\,\,\,\,\,\,\,\,\,\,\,\,\,\,\,\,\,\,\,\,\,\,\,\,\,\,\,\,\,\,\,\,\,\,\,\,\,\,\,\,\,-3-1$ & $\sqrt{\mu_{n,2}+2}\;\left(\mu_{n,3}+2\mu_{n,2}+2\right)$\tabularnewline
\hline 
$8,9$ & $1-3-5-\,\,\,\,\,\,\,\,\,\,1-\,\,\,\,\,\,\,\,\,\,\,\,\,\:\:3-7\:\:\,\,\,\,\,\,\,\,\,\,\,\,\,-1\,\,\,\,\,\,\,\,\,\,-5-3-1$ & $\sqrt{\mu_{n,2}+2}\;\left(\mu_{n,4}+2\mu_{n,3}+2\mu_{n,2}+2\right)$\tabularnewline
\hline 
$10,11$ & $1-3-5-7-1-3-1-5-9-3-1-7-5-3-1$ & $\sqrt{\mu_{n,2}+2}\;\left(\mu_{n,5}+2\mu_{n,4}+2\mu_{n,3}+2\mu_{n,2}+2\right)$\tabularnewline
\hline 
$...$ & $...$ & $...$\tabularnewline
\hline 
\end{tabular}

}
\end{center}
\end{table}

\begin{table}[H]
\caption{Rhombic CAST substitution rule edge configuration for case 3}

\begin{center}
\resizebox{\textwidth}{!}{%

\begin{tabular}{|c|c|l|}
\hline 
$n$ & minimal rhomb edge sequence & minimal inflation multiplier $\eta_{min}$\tabularnewline
\hline 
\hline 
$4,5$ & $0-2-0-2-0$ & $2\mu_{n,2}+3$\tabularnewline
\hline 
$6,7$ & $0-2-4-0-2-0-2-0-4-2-0$ & $2\mu_{n,3}+4\mu_{n,2}+3$\tabularnewline
\hline 
$8,9$ & $0-2-4-6-0-2-4-0-2-0-2-0-4-2-0-6-4-2-0$ & $2\mu_{n,4}+4\mu_{n,3}+4\mu_{n,2}+3$\tabularnewline
\hline 
$...$ & $...$ & $...$\tabularnewline
\hline 
\end{tabular}

}
\end{center}
\end{table}
\begin{table}[H]
\caption{\label{tab:Rhomb-CAST-edgeconf-min-infl-mult-case4}Rhombic CAST substitution
rule edge configuration for case 4}

\begin{center}
\resizebox{\textwidth}{!}{%

\begin{tabular}{|c|c|l|}
\hline 
$n$ & minimal rhomb edge sequence & minimal inflation multiplier $\eta_{min}$\tabularnewline
\hline 
\hline 
$4,5$ & $1-3-1-1-3-1$ & $\sqrt{\mu_{n,2}+2}\;\left(2\mu_{n,2}+4\right)$\tabularnewline
\hline 
$6,7$ & $1-3-5-1-3-1-1-3-1-5-3-1$ & $\sqrt{\mu_{n,2}+2}\;\left(2\mu_{n,3}+4\mu_{n,2}+4\right)$\tabularnewline
\hline 
$8,9$ & $1-3-5-7-1-3-5-1-3-1-1-3-1-5-3-1-7-5-3-1$ & $\sqrt{\mu_{n,2}+2}\;\left(2\mu_{n,4}+4\mu_{n,3}+4\mu_{n,2}+4\right)$\tabularnewline
\hline 
$...$ & $...$ & $...$\tabularnewline
\hline 
\end{tabular}

}
\end{center}
\end{table}
\clearpage{}
\begin{figure}
\begin{center}
\resizebox{\textwidth}{!}{%

\includegraphics{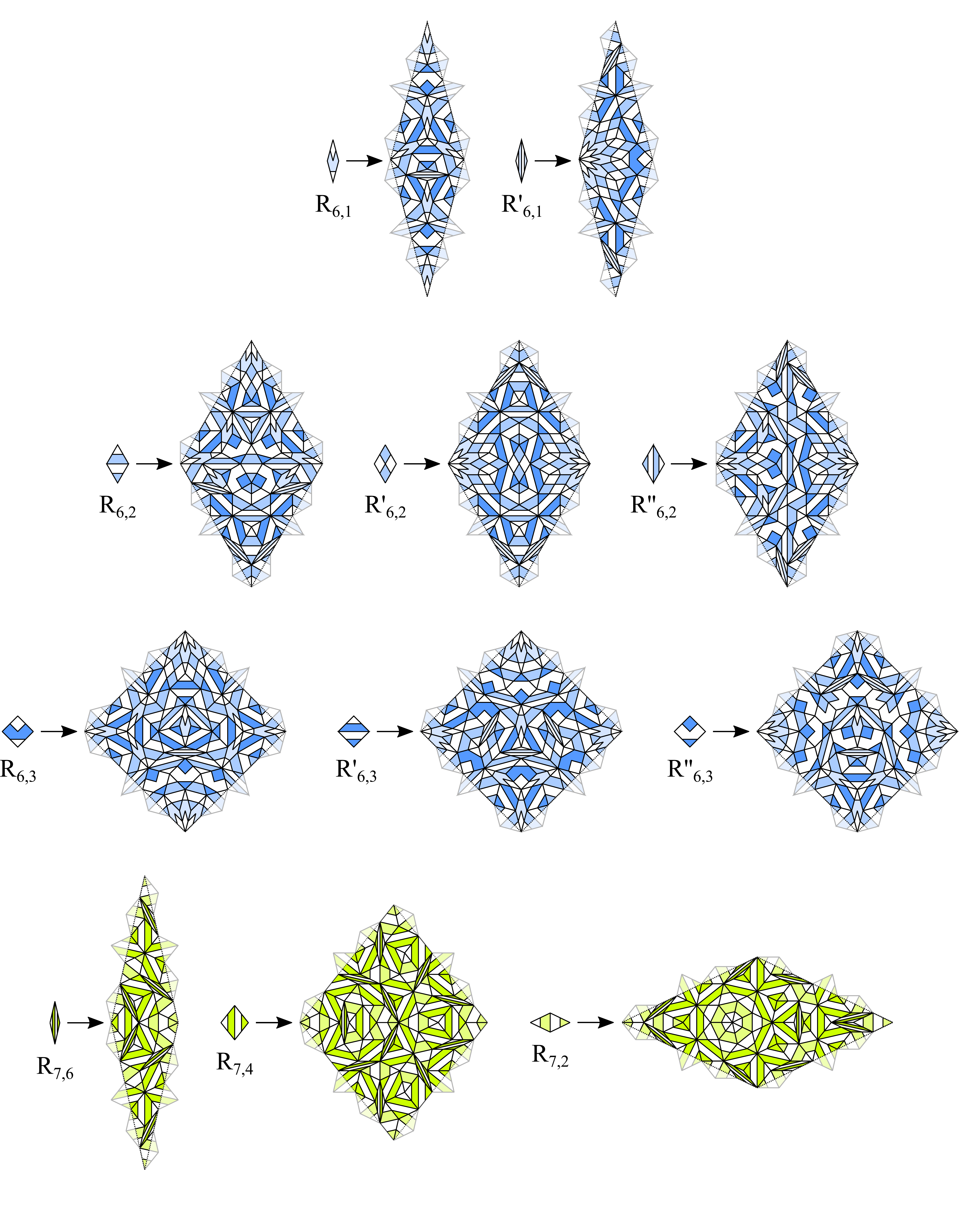}}
\end{center}\caption{\label{fig:Rhomb-CAST-Case1}Rhombic CAST examples for case 1a ($n=6$)
and case 1b ($n=7$)}
\end{figure}
\begin{figure}
\begin{center}
\resizebox{\textwidth}{!}{%

\includegraphics{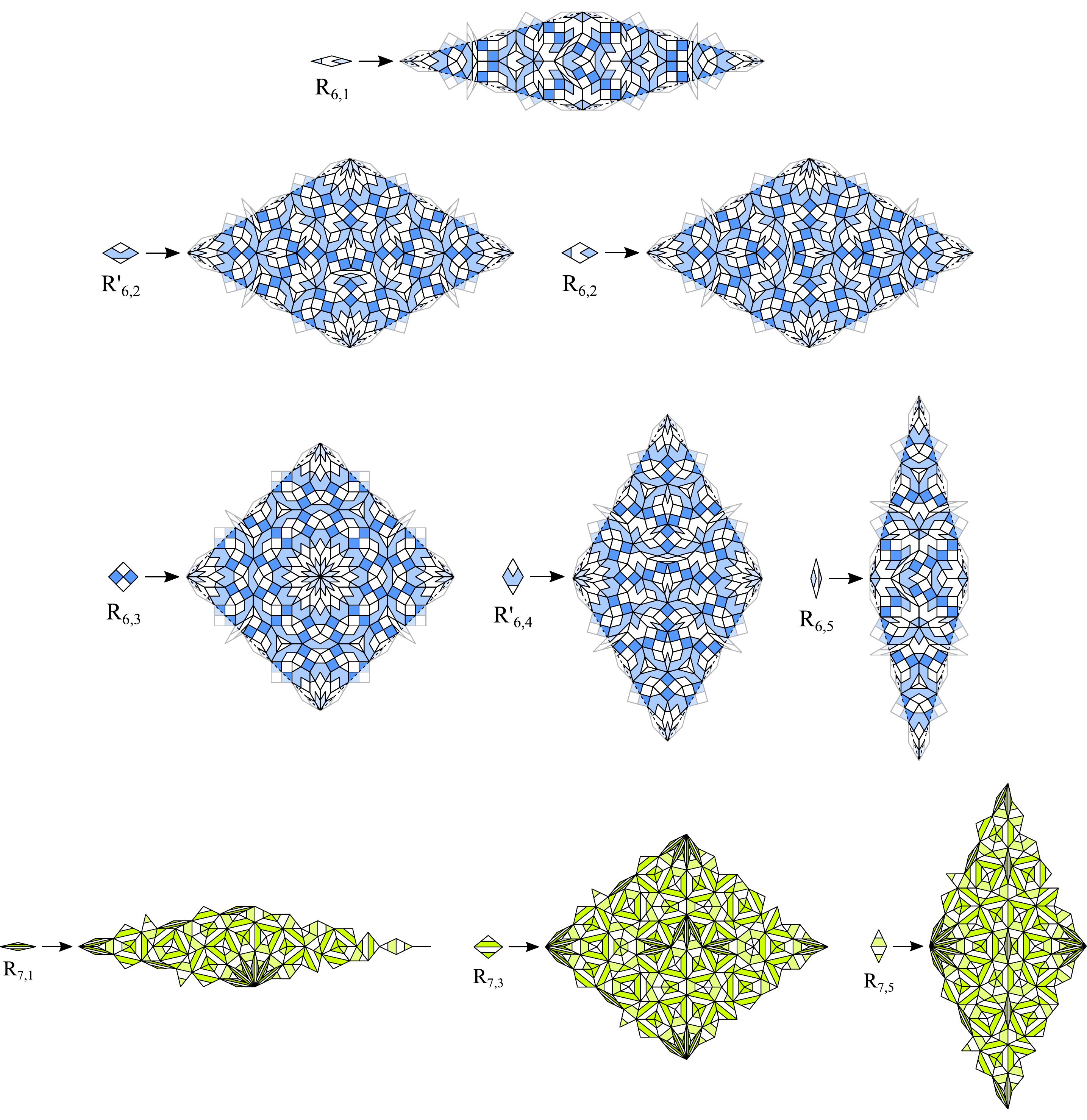}}
\end{center}\caption{\label{fig:Rhomb-CAST-Case2}Rhombic CAST examples for case 2a ($n=6$)
and case 2b ($n=7$). The shown example for case 2b was slightly modified
to reduce the number of prototiles to $\left\lfloor n/2\right\rfloor $
as in case 1b. In detail, the edges of the rhomb prototiles have orientation
as shown in Fig.~\ref{fig:Case1b-rhomb-edge-configuration}.}
\end{figure}
\begin{figure}
\begin{center}
\resizebox{\textwidth}{!}{%

\includegraphics{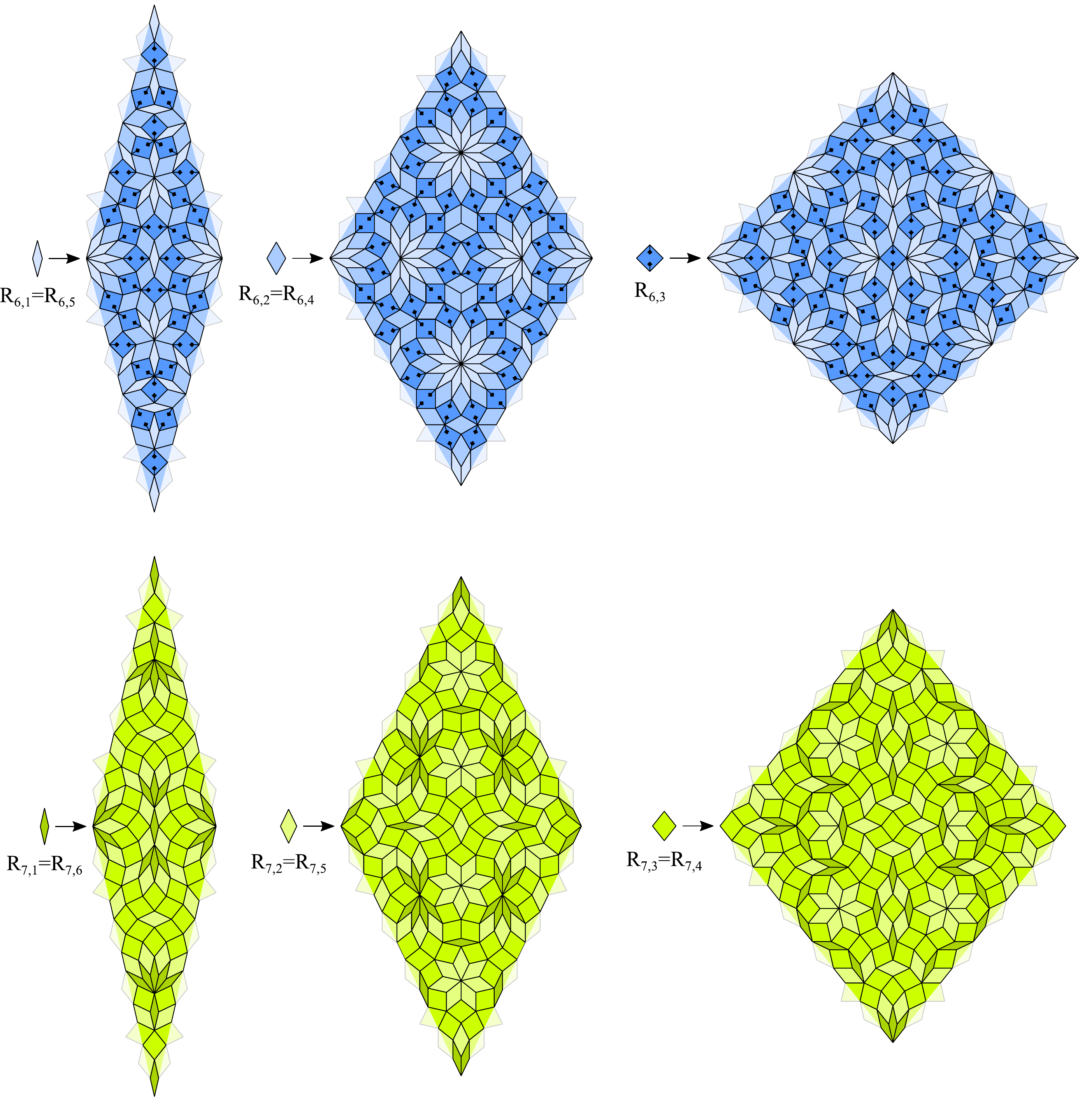}}
\end{center}\caption{\label{fig:Rhomb-CAST-Case3}Rhombic CAST examples for case 3a ($n=6$)
and case 3b ($n=7$)}
\end{figure}
\begin{figure}
\begin{center}
\resizebox{\textwidth}{!}{%

\includegraphics{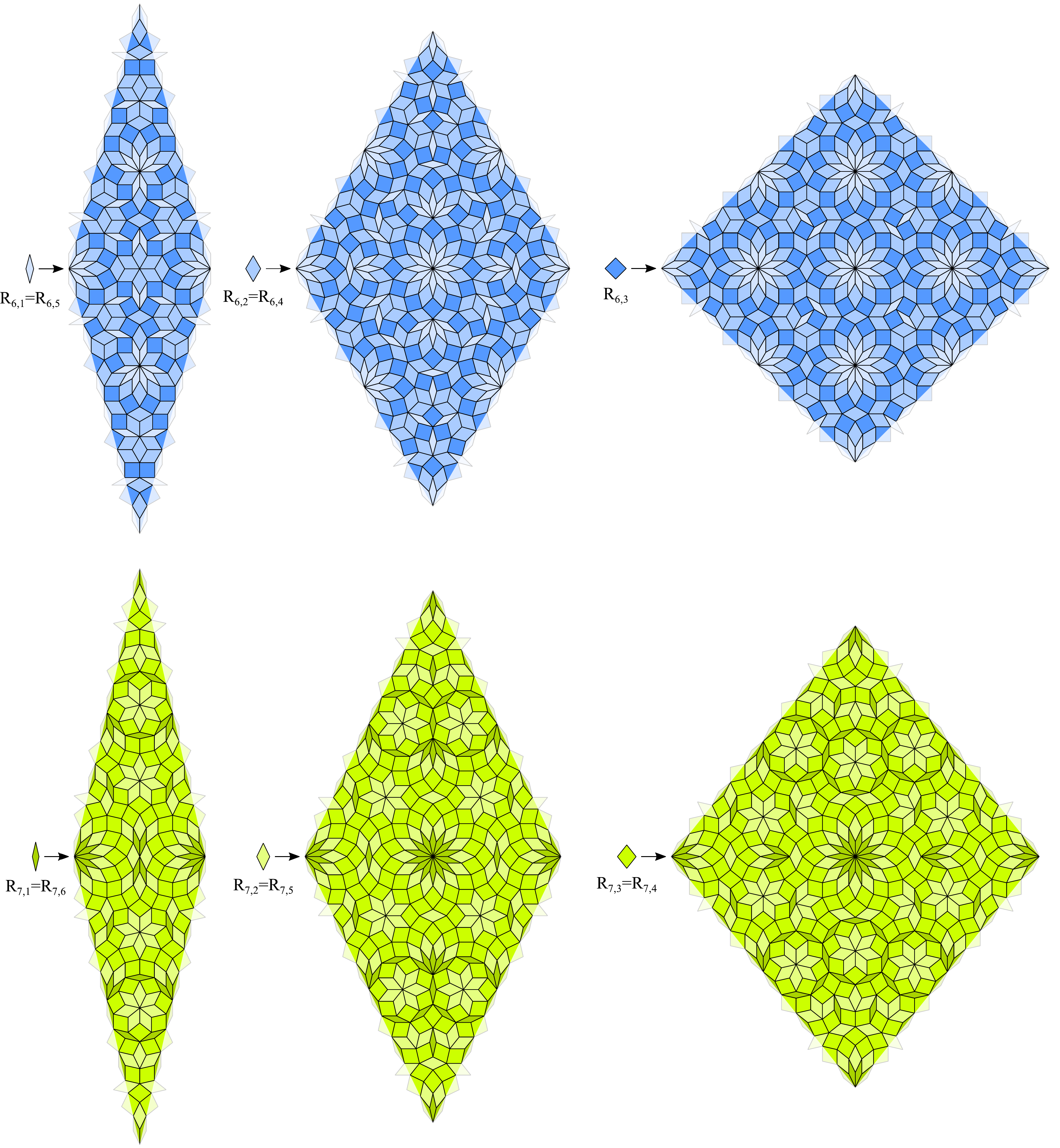}}
\end{center}\caption{\label{fig:Rhomb-CAST-Case4}Rhombic CAST examples for case 4a ($n=6$)
and case 4b ($n=7$)}
\end{figure}

\clearpage{}

\section{\label{sec:Gaps_to_Prototiles_Algorithm}Gaps to Prototiles Algorithm}

In this section, we sketch a “Gaps to Prototiles'' algorithm to identify
CASTs for a given $n$ and a selected edge of a substitution rule
(as in Definition~\ref{def:edge_defintion}).

Conditions:
\begin{itemize}
\item All prototiles have inner angles equal $\frac{k\pi}{n}$.
\item All edges of all substitution rules are congruent and have dihedral
symmetry $D_{2}$.
\item As discussed in Section~\ref{sec:Rhomic_CASTs_with_symmetric_edges_and_sunbstitution_rules}
the tiles on the edge have to be placed, so that the inner angles
either with even or odd multiples of $\frac{\pi}{n}$ are bisected
by the boundary of the supertile.
\item The tiles on the edge are bisected by one or two lines of symmetry
of the edge. This implies dihedral symmetry $D_{1}$ or $D_{2}$ of
the corresponding substitution rules. 
\item The inflation multiplier $\eta$ must fulfill the conditions in Theorem~\ref{thm: CAST1}.
\item The inflation multiplier $\eta$ is defined by the sequence of tiles
which are part of the edge.
\end{itemize}
Algorithm:
\begin{enumerate}
\item We start with the prototiles which appear on the edge of the substitution
rule.
\item \label{enu:step2}We start the construction of the substitution rules
by placing the prototiles on the edge.
\item If the edge prototiles overlap the algorithm has failed. In this case,
we may adjust the sequence of rhombs or other equilateral polygons
on the edge and start another attempt.
\item We try to “fill up” the substitution rules with existing prototiles
under consideration of the appropriate dihedral symmetry $D_{1}$
or $D_{2}$. If gaps remain, they are defined as new prototiles and
we go back to step (\ref{enu:step2}). Please note, if a gap lies
on one or two lines of symmetry, the substitution rule of the new
prototile must also have the appropriate dihedral symmetry $D_{1}$
or $D_{2}$.
\item If no gaps remain the algorithm was successful.
\end{enumerate}
In some cases the “Gaps to Prototiles'' algorithm delivers results
with preferable properties as shown in Fig.~\ref{fig:Residual-CAST-7-1},
\ref{fig:Residual-CAST-7-2} and~\ref{fig:Residual-CAST-4}. However,
some results contain a very large number of prototiles with different
sizes and complex shapes as in Fig.~\ref{fig:Residual-CAST-7-2}.
It is not known yet whether the algorithm always delivers solutions
with a finite number of prototiles. 

\begin{table}[H]
\caption{Inflation multipliers of CASTs identified by the “Gaps to Prototiles''
algorithm}

\begin{center}

\begin{tabular}{|c|c|c|}
\hline 
n & inflation multiplier & \tabularnewline
\hline 
\hline 
7 & $2\mu_{7,3}+2\mu_{7,2}+1$ & Fig.~\ref{fig:Residual-CAST-7-1}\tabularnewline
\hline 
7 & $\mu_{7,2}+2$ & Fig.~\ref{fig:Residual-CAST-7-2}\tabularnewline
\hline 
4 & $\sqrt{\mu_{4,2}+2}$ & Fig.~\ref{fig:Residual-CAST-4}\tabularnewline
\hline 
\end{tabular}

\end{center}
\end{table}

\begin{figure}
\begin{center}
\resizebox{\textwidth}{!}{%

\includegraphics{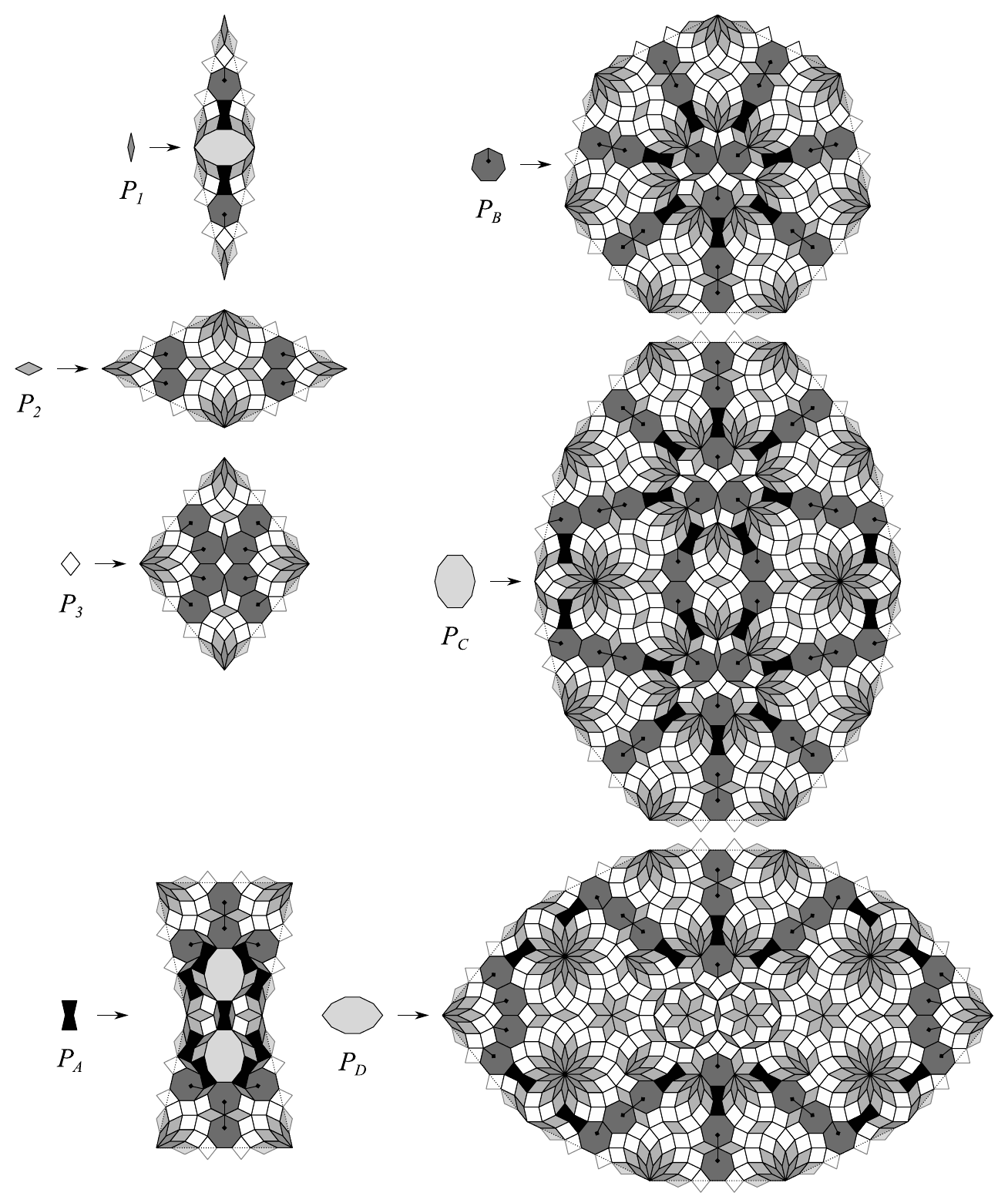}}
\end{center}\caption{\label{fig:Residual-CAST-7-1}CAST for the case $n=7$}
\end{figure}

\begin{figure}
\begin{center}
\resizebox{.9\textwidth}{!}{%

\includegraphics{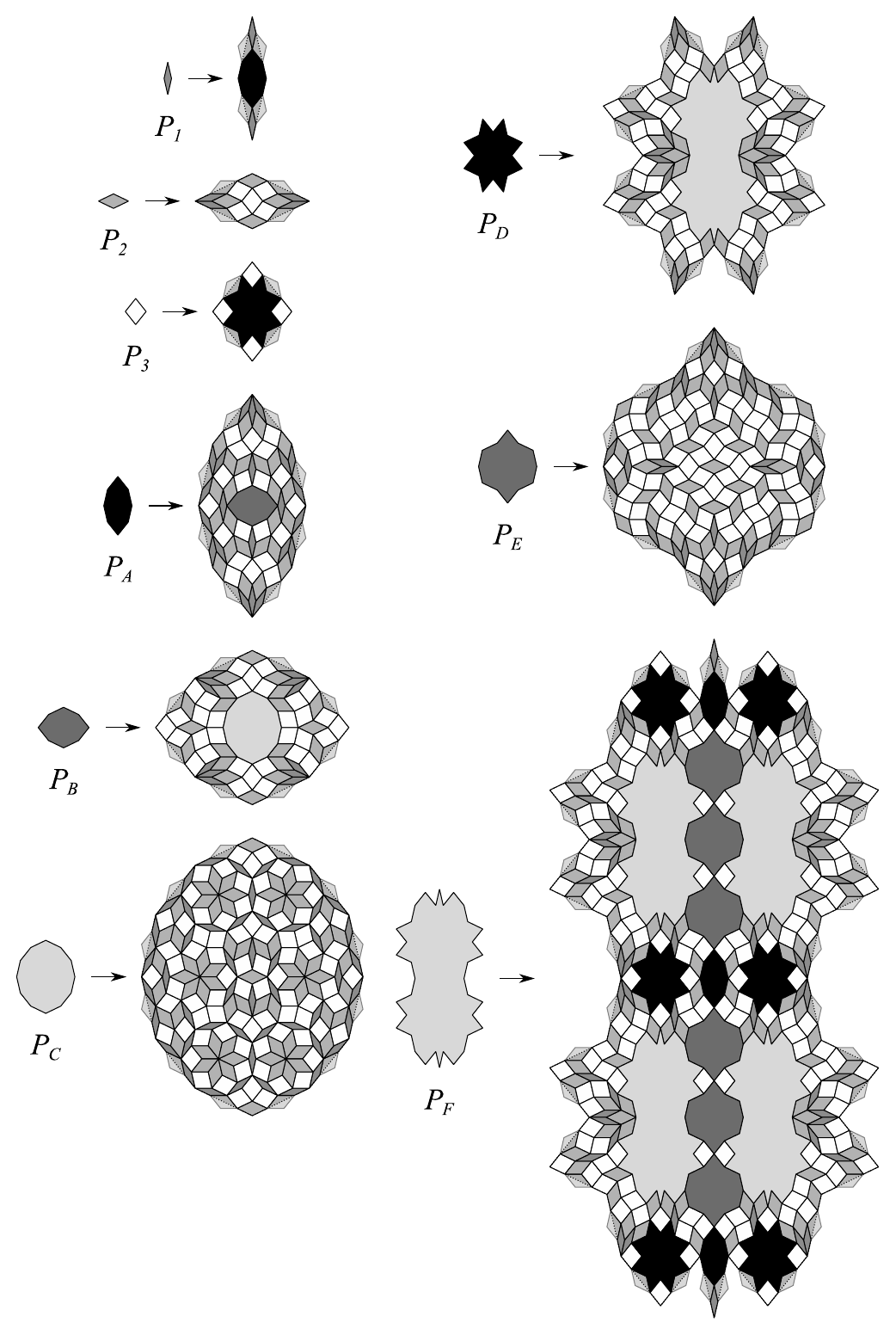}}
\end{center}\caption{\label{fig:Residual-CAST-7-2}CAST for the case $n=7$, derived from
the Goodman-Strauss tiling in \citep{journals/dcg/Harriss05}}
\end{figure}
\begin{figure}
\begin{center}
\resizebox{\textwidth}{!}{%

\includegraphics{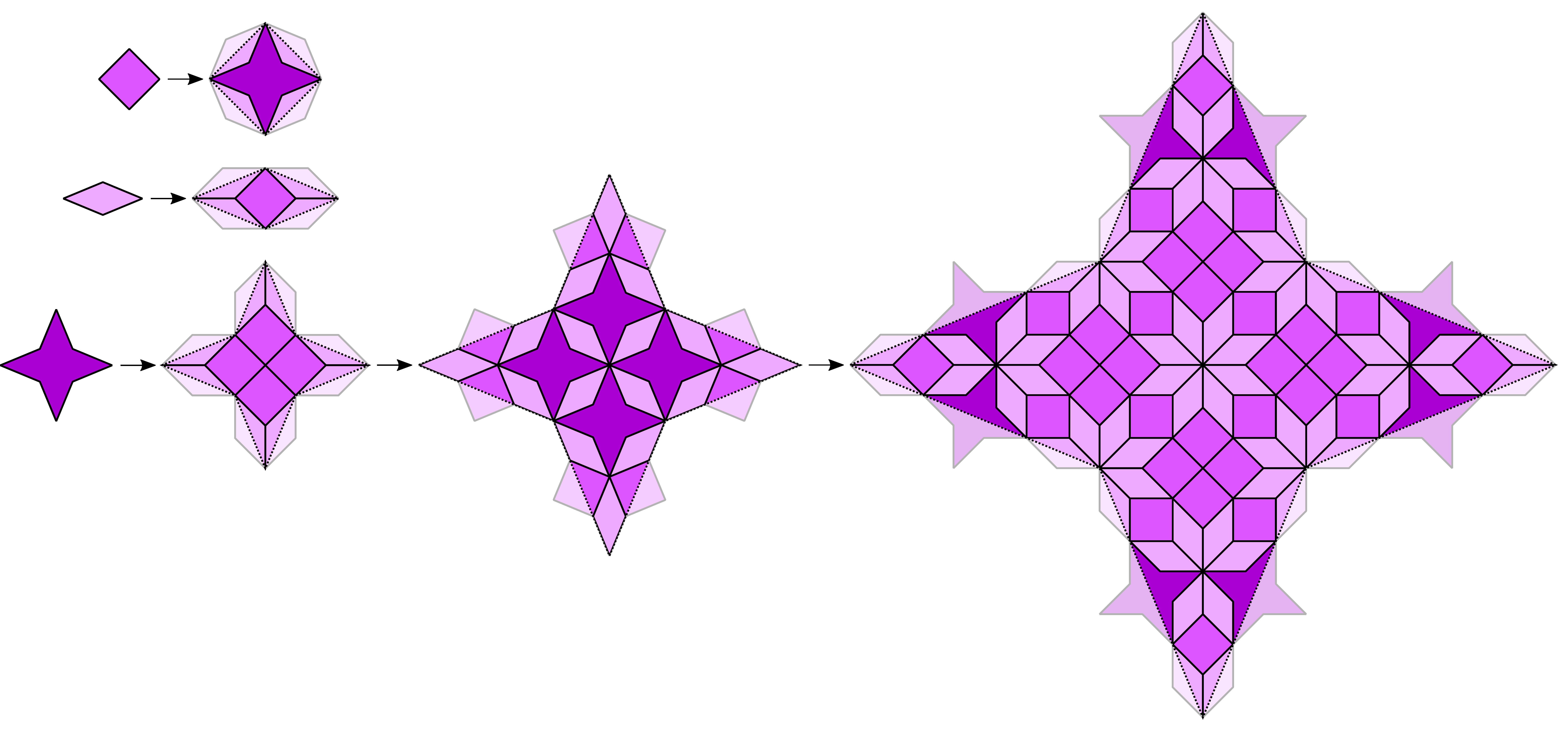}}
\end{center}\caption{\label{fig:Residual-CAST-4}CAST for the case $n=4$, derived from
the generalized Lançon-Billard tiling in Fig.~\ref{fig:LB-Tiling-Generalized}}
\end{figure}
\clearpage{}

\section{\label{sec:CASTs_with_Extended_Girih_Prototiles}Extended Girih CASTs}

\textquotedbl{}Girih\textquotedbl{} is the Persian word for \textquotedbl{}knot\textquotedbl{}
and stands for complex interlaced strap works of lines, which are
a typical feature of Islamic architecture and design. A common definition
is given in \citep{AllenT2004}: \textquotedbl{}Geometric (often star-and-polygon)
designs composed upon or generated from arrays of points from which
construction lines radiate and at which they intersect.\textquotedbl{}
The oldest known examples of star pattern date back to the 8th century
AD \citep{LeeAJ1987}. Girih designs are known in many styles and
symmetries, see \citep{Bourgoin1973} for examples. A variant of Girih
design relies on Girih tiles and tilings as shown in the reproduction
of the Topkapi Scroll in \citep{necipoglu1995topkapi}. The decorations
on the tiles consist of lines which run from tile to tile when they
are joined together. So the borders between joined tiles seem to disappear.

According to \citep{necipoglu1995topkapi} the shapes of all girih
tiles are equilateral polygons with the same side length and inner
angles $\frac{k\pi}{5}$, $k\in\{2,3,4,6\}$ including:
\begin{itemize}
\item Regular decagon with inner angles $\frac{4\pi}{5}$
\item Regular pentagon with inner angles $\frac{3\pi}{5}$
\item Rhomb with inner angles $\frac{2\pi}{5}$ and $\frac{3\pi}{5}$
\item Convex hexagon with inner angles $\frac{2\pi}{5}$,$\frac{4\pi}{5}$,$\frac{4\pi}{5}$,$\frac{2\pi}{5}$,$\frac{4\pi}{5}$,$\frac{4\pi}{5}$
\item Convex hexagon with inner angles $\frac{3\pi}{5}$,$\frac{3\pi}{5}$,$\frac{4\pi}{5}$,$\frac{3\pi}{5}$,$\frac{3\pi}{5}$,$\frac{4\pi}{5}$
\item Nonconvex hexagon with inner angles $\frac{2\pi}{5}$, $\frac{2\pi}{5}$,
$\frac{6\pi}{5}$, $\frac{2\pi}{5}$, $\frac{2\pi}{5}$, $\frac{6\pi}{5}$
\end{itemize}
Because of these properties, Girih tilings are cyclotomic tilings
as well.

A Girih (cyclotomic) aperiodic substitution tiling was derived from
a mosaic at the Darb-i Imam Shrine, Isfahan, Iran which dates back
from 1453. It relies on the regular decagon and two hexagons and has
individual dihedral symmetry $D_{10}$. It was published in \citep{Lu2007,Lu2007a}.
However, the complete set of substitution rules can be found in \citep{bridges2008:297,Cromwell2008}.
Additional examples of Girih CASTs have been discovered and submitted
to \citep{HFonl}. More examples have been published in \citep{LUCK1990832,Lueck1994139,Cromwell2008}.

That rises the question if Girih CASTs with other symmetries and relative
small inflation multiplier exist. For this reasons we have to define
the extended Girih CASTs. The following definition turned out to be
useful for a given $n\geq4$. 
\begin{itemize}
\item All prototiles of an extended Girih CAST are equilateral polygons
with the same side length.
\item The inner angles of all prototiles are $\frac{k\pi}{n}$, $k\in\left\{ 2,3...\left(n-1\right),\left(n+1\right),\left(n+2\right)...\left(n-2\right)\right\} $.
\item One of the prototiles may be a regular $n$-gon with inner angles
$\frac{\left(n-2\right)\pi}{n}$. 
\item One of the prototiles may be a regular $2n$-gon with inner angles
$\frac{\left(n-1\right)\pi}{n}$. 
\end{itemize}
Please note that prototiles with inner angle $\frac{\pi}{n}$ are
forbidden due to aesthetic reasons.

For the extended Girih CASTs in this section we choose the following
properties:
\begin{itemize}
\item All edges of the substitution rules are congruent and have dihedral
symmetry $D_{2}$.
\item All substitution rules except those for regular $n$-gons with $odd\;n$
have dihedral symmetry $D_{2}$.
\item The substitution rule of the regular $n$-gon with $odd\;n$ has dihedral
symmetry $D_{1}$.
\item The substitution rule of the regular $2n$-gon has dihedral symmetry
$D_{2n}$ for $odd\;n$ and $D_{n}$ for $even\;n$.
\end{itemize}
Examples for extended Girih CASTs in this section with $n\in\left\{ 4,5,7\right\} $
are shown in Fig.~\ref{fig:Girih4}~-~\ref{fig:Girih7c}.

The Girih CASTs in Fig.~\ref{fig:Girih4}, \ref{fig:Girih5} and
\ref{fig:Girih7a}~-~\ref{fig:Girih7c} have been obtained by a
trial and error method under the following conditions:
\begin{itemize}
\item In every corner of every substitution rule a regular $2n$-gon is
placed.
\item Edge and inflation multiplier have been derived from a periodic pattern
of regular $2n$-gons and their inter space counterparts.\\
(For $n\geq8$ this approach might fail. As an alternative it is possible
to reuse inflation multipliers from rhombic CASTs in Section~\ref{sec:Rhomic_CASTs_with_symmetric_edges_and_sunbstitution_rules},
case 1 and 2. See Table~\ref{tab:Rhomb-CAST-edgeconf-min-infl-mult-case1}
and \ref{tab:Rhomb-CAST-edgeconf-min-infl-mult-case2} for details.)
\end{itemize}
The Girih CAST in Fig.~\ref{fig:Girih5-Topkapi} has been derived
from \citep[Fig. 14, 15]{Cromwell2008}. In detail the nonconvex hexagons
were replaced and the star shaped gap in the center of the substitution
rule of the pentagon prototile \citep[Fig. 15 (b)]{Cromwell2008}
was eliminated. This was possible by changing the symmetry of the
substitution rule from dihedral symmetry $D_{5}$ to $D_{1}$. The
tilings in \citep[Fig. 14, 15]{Cromwell2008} were derived by an analysis
of patterns shown in the Topkapi Scroll, in detail \citep[Panels 28, 31, 32, 34]{necipoglu1995topkapi}.
\begin{rem}
Please note that the case $even\;n$ requires special care to make
sure that the regular $2n$-gons with dihedral symmetry $D_{n}$ always
match. It seems that the existence of a substitution rule of the regular
$2n$-gons with dihedral symmetry $D_{2n}$ requires the existence
of prototiles with inner angle $\frac{\pi}{n}$ which are forbidden
due to our preconditions. As a result, additional substitution rules
might be necessary for prototiles with the same shape but different
orientations.
\end{rem}

\begin{rem}
The decorations at the prototiles are related but not necessarily
equivalent to Ammann bars. 
\end{rem}
\begin{table}
\caption{Inflation multipliers of extended Girih CASTs}

\begin{center}

\begin{tabular}{|c|c|c|}
\hline 
n & inflation multiplier & \tabularnewline
\hline 
\hline 
4 & $\mu_{4,2}+1$ & Fig.~\ref{fig:Girih4}\tabularnewline
\hline 
5 & $\sqrt{\mu_{5,2}+2}(\mu_{5,2}+1)$ & Fig.~\ref{fig:Girih5}\tabularnewline
\hline 
5 & $2(\mu_{5,2}+1)$ & Fig.~\ref{fig:Girih5-Topkapi}\tabularnewline
\hline 
7 & $\mu_{7,2}+2\mu_{7,1}+2$ & Fig.~\ref{fig:Girih7a},~\ref{fig:Girih7b},~\ref{fig:Girih7c}\tabularnewline
\hline 
\end{tabular}

\end{center}
\end{table}

\begin{figure}
\begin{center}
\resizebox{0.7\textwidth}{!}{%

\includegraphics{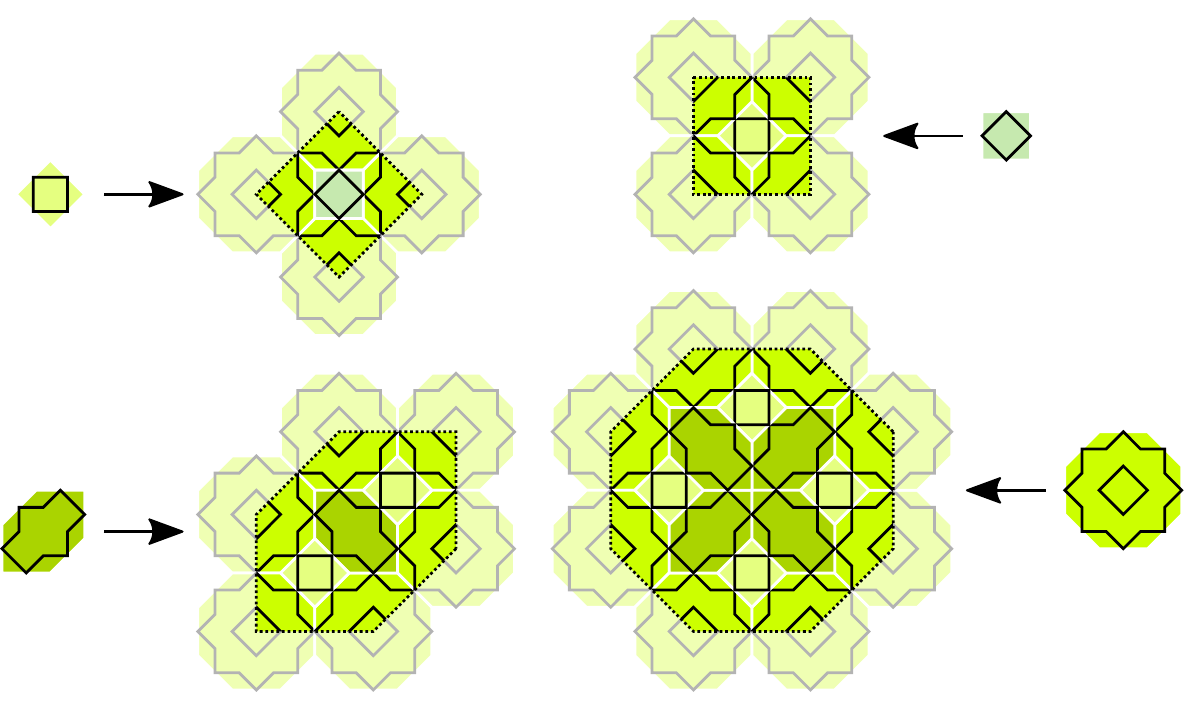}}
\end{center}\caption{\label{fig:Girih4}Extended Girih CAST for the case $n=4$}
\end{figure}
\begin{figure}[H]
\begin{center}
\resizebox{0.9\textwidth}{!}{%

\includegraphics{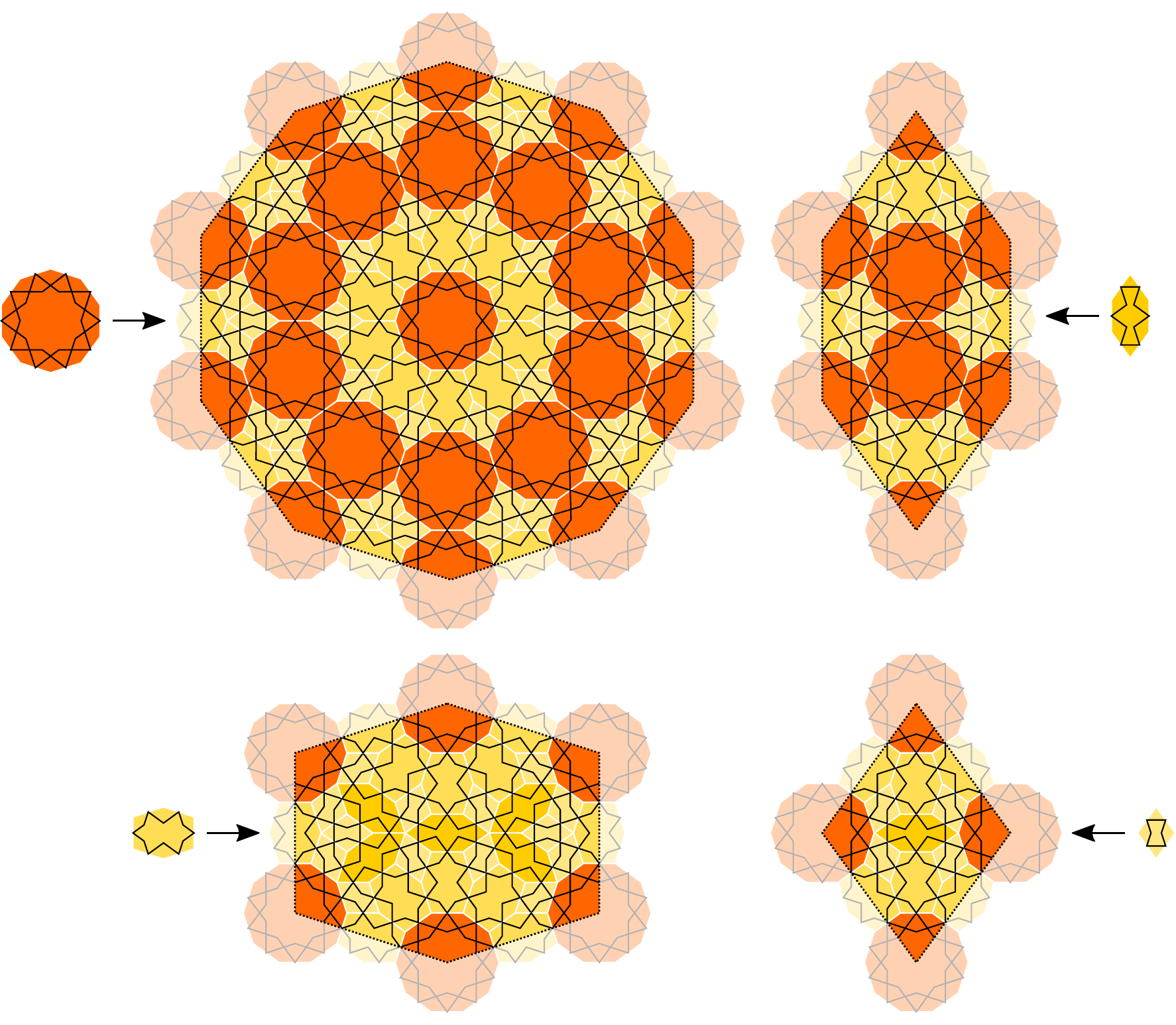}}
\end{center}\caption{\label{fig:Girih5}Girih CAST for the case $n=5$}
\end{figure}
\begin{figure}[H]
\begin{center}
\resizebox{1\textwidth}{!}{%

\includegraphics{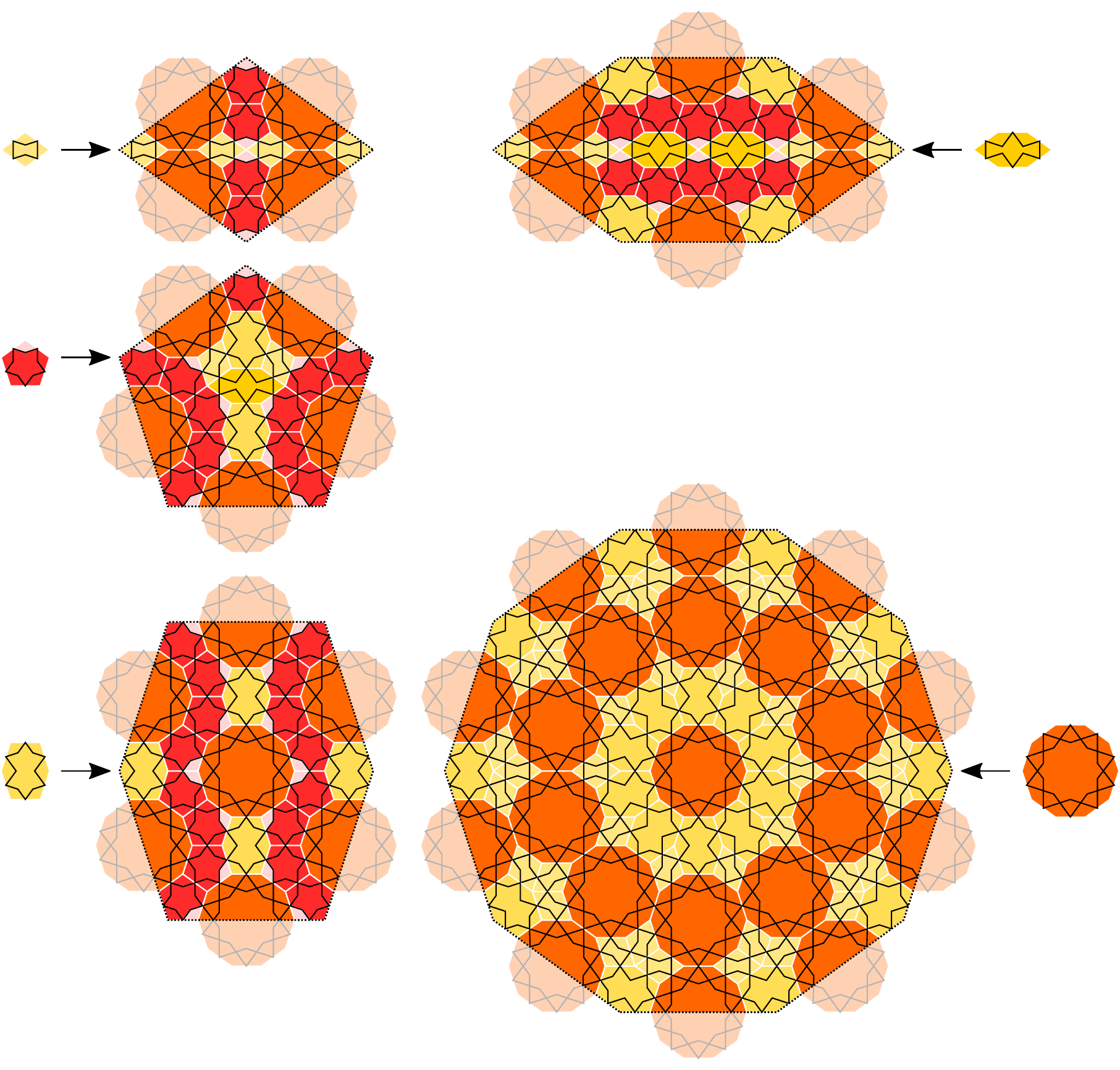}}
\end{center}\caption{\label{fig:Girih5-Topkapi}Another Girih CAST for the case $n=5$,
derived from \citep[Fig. 14, 15]{Cromwell2008} and patterns shown
in the Topkapi Scroll, in detail \citep[Panels 28, 31, 32, 34]{necipoglu1995topkapi}. }
\end{figure}
\begin{figure}[H]
\begin{center}
\resizebox{\textwidth}{!}{%

\includegraphics{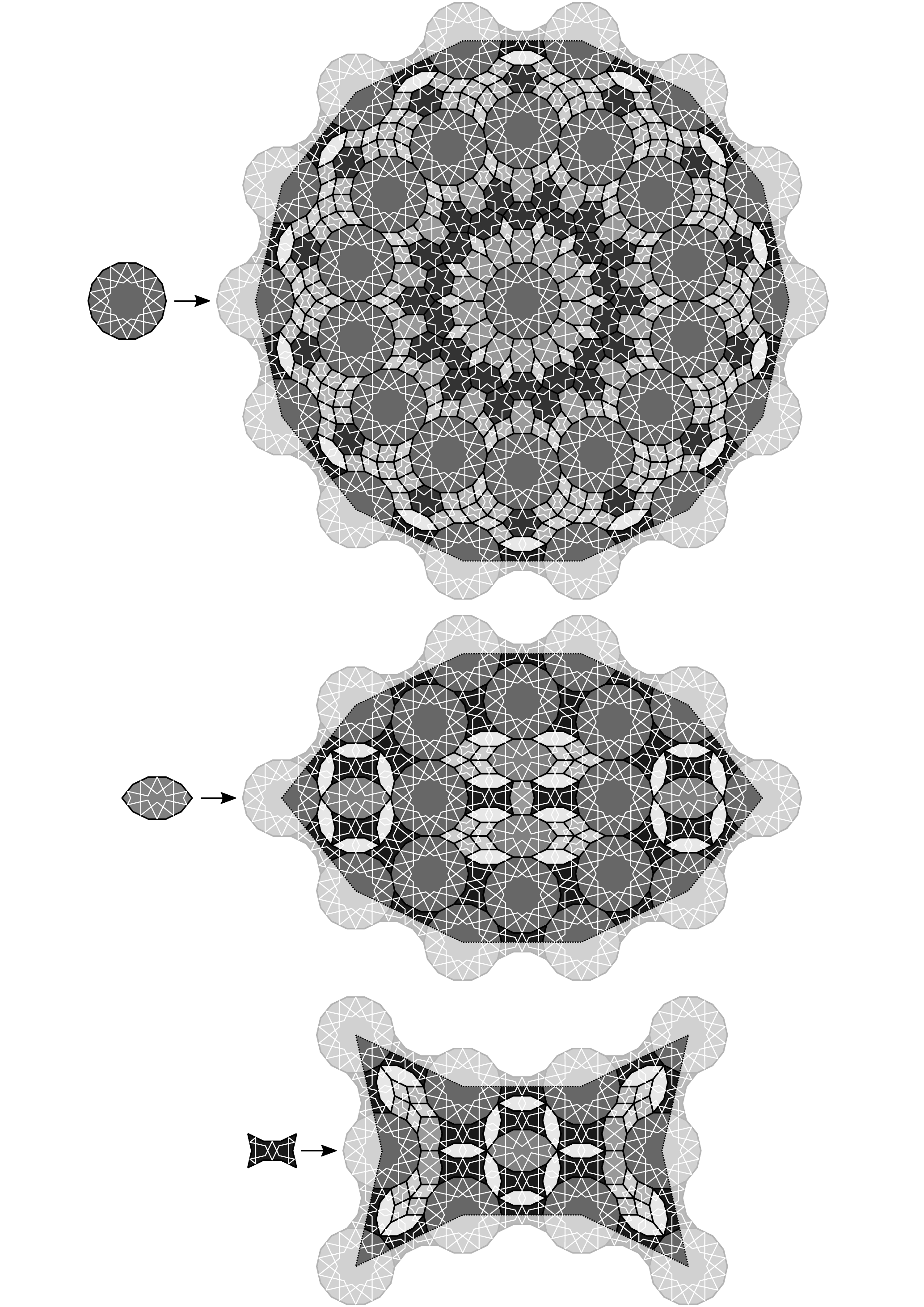}}
\end{center}\caption{\label{fig:Girih7a}Extended Girih CAST for the case $n=7$, substitution
rules part 1}
\end{figure}
\begin{figure}[H]
\begin{center}
\resizebox{\textwidth}{!}{%

\includegraphics{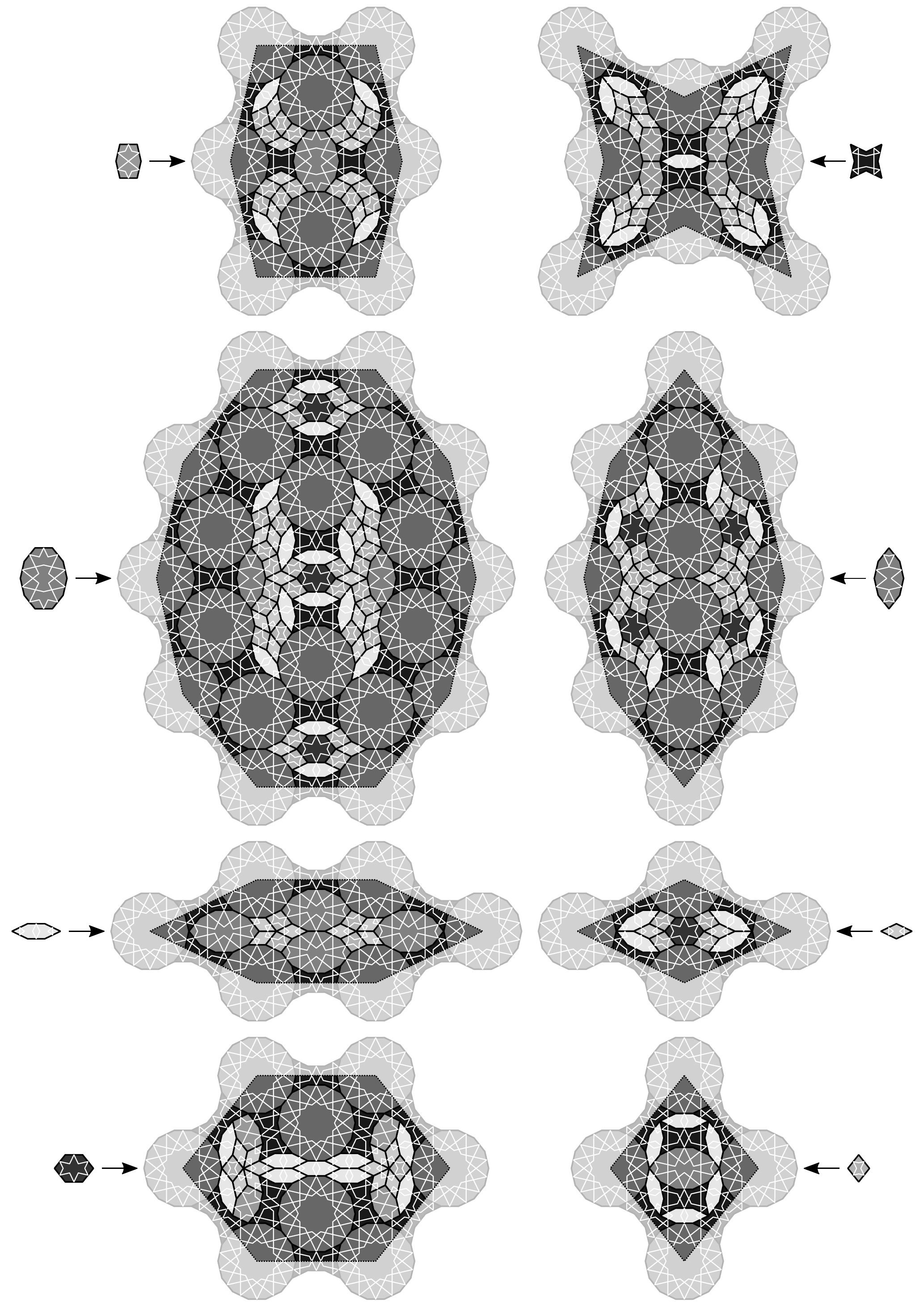}}
\end{center}\caption{\label{fig:Girih7b}Extended Girih CAST for the case $n=7$, substitution
rules part 2}
\end{figure}
\begin{figure}[H]
\begin{center}
\resizebox{\textwidth}{!}{%

\includegraphics{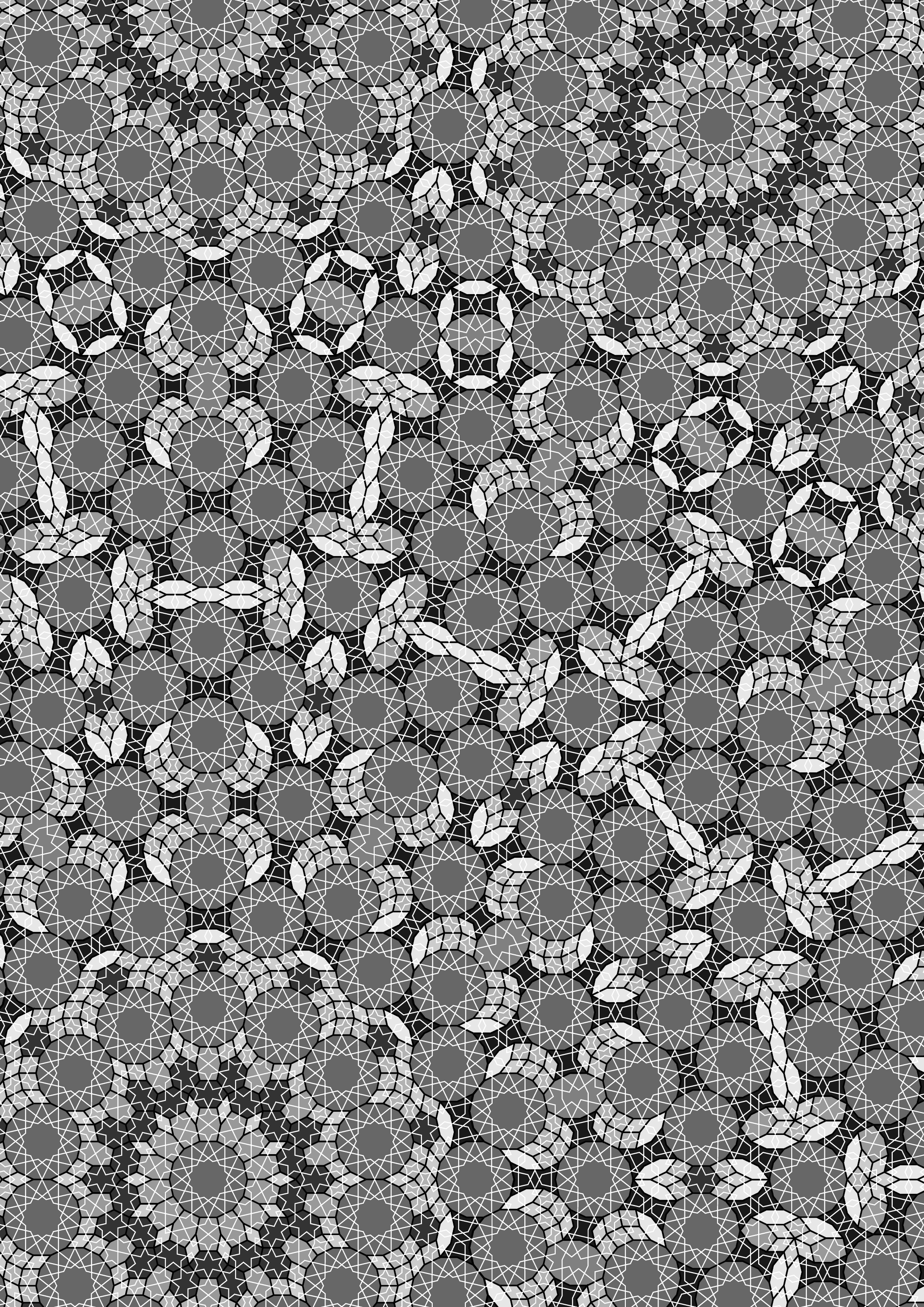}}
\end{center}\caption{\label{fig:Girih7c}Extended Girih CAST for the case $n=7$}
\end{figure}

\section{Summary and Outlook}

Although the motivation behind this article was mainly aesthetic,
some significant results have been achieved. Cyclotomic Aperiodic
Substitution Tilings (CASTs) cover a large number of new and well
known aperiodic substitution tilings as shown in Table~\ref{tab:CASTs}.
The properties of CASTs, in detail their substitution matrices and
their minimal inflation multipliers, can be used as practical starting
point to identify previously unknown solutions. For many cases, such
solutions yield individual dihedral symmetry $D_{n}$ or $D_{2n}$.

The different approaches to identify CASTs have their individual advantages
and disadvantages. The preferable properties as listed in the introduction
section may be complemented by a high frequency of patches with dihedral
symmetry. A promising approach we do not discuss in this article may
be the choice of inflation multipliers which are PV numbers. 

The results in this paper focus strictly on the Euclidean plane so
that Equation~\eqref{eq:lambda-a-square} applies. However, the methods
described herein might be adapted for other cases as well.

Finally several conjectures have been made, which require further
research.

\begin{table}
\caption{\label{tab:CASTs}Inflation multipliers and individual symmetry of
some CASTs}

\begin{center}
\resizebox{\textwidth}{!}{%

\begin{tabular}{|>{\centering}m{0.05\textwidth}|>{\raggedright}m{0.35\textwidth}|>{\centering}m{0.3\textwidth}|>{\centering}m{0.12\textwidth}|>{\raggedright}m{0.4\textwidth}|}
\hline 
n & Name & Inflation\linebreak multiplier & Patches with\linebreak individual \linebreak  symmetry & Reference\tabularnewline
\hline 
5 & Penrose & $\mu_{5,2}$ & $D_{5}$ & \citep{Penrose1974,Gardner1977,Penrose1979,HFonl}\linebreak \citep[Ch. 10.3]{Grunbaum:1986:TP:19304}\linebreak \citep[Ch. 6.2]{oro38933}\tabularnewline
\hline 
7 & Danzer's 7-fold variant & $\mu_{7,2}$ & $D_{7}$ & \citep[Fig. 1, Sec. 3, 2nd matrix]{Nischke1996}\linebreak  Herein
Fig.~\ref{fig:Nischke-Danzer-Mu2}\tabularnewline
\hline 
7 & Danzer's 7-fold variant & $\mu_{7,3}$ & $D_{1}$ & \citep[credited to L. Danzer]{HFonl} \tabularnewline
\cline{4-5} 
 & (two variants) &  & $D_{7}$ & \citep[Fig. 11]{Nischke1996}\tabularnewline
\hline 
7 & Math Pages 7-fold & $\mu_{7,3}$ & - & \citep{MathPages1}\tabularnewline
\hline 
9 & Math Pages 9-fold & $\mu_{9,4}$ & $D_{9}$ & \citep{MathPages1}\tabularnewline
\hline 
5 & Lançon-Billard / Binary & $\sqrt{\mu_{5,2}+2}$ & - & \citep{LanconF.1988,C.Godreche1992,HFonl}\linebreak \citep[Ch. 6.5.1]{oro38933}\tabularnewline
\hline 
6 & Shield & $\sqrt{\mu_{6,2}+2}$ & - & \citep{Gaehler1988,HFonl,Niizeki1987}\linebreak \citep[Ch. 6.3.2]{oro38933}\tabularnewline
\hline 
4 & Ammann-Beenker & $\mu_{4,2}+1$ & $D_{8}$ & \citep{Beenker,Socolar1989,journals/dcg/AmmannGS92,HFonl}\linebreak \citep[Ch. 10.4]{Grunbaum:1986:TP:19304}\citep[Ch. 6.1]{oro38933}\tabularnewline
\hline 
5 & Tie and Navette / Bowtie-Hexagon 1 & $\mu_{5,2}+1$ & - & \citep{LUCK1990832,Lueck1994139}\linebreak \citep[Sec. 8.2, Fig. 8.3]{scheffer1998}\linebreak \citep{HFonl}\linebreak \citep[Fig. 25]{Cromwell2008}\tabularnewline
\hline 
5 & Bowtie-Hexagon-Decagon 1 & $\mu_{5,2}+1$ & $C_{5}$ & \citep[credited to L. Andritz]{HFonl}\tabularnewline
\hline 
7 & Danzer's 7-fold\linebreak (two variants) & $\mu_{7,2}+1$ & $D_{1}$ & \citep[Fig. 12]{Nischke1996}\citep{HFonl}\linebreak \citep[Ch. 6.5.2]{oro38933}\tabularnewline
\hline 
7 & Franco-Ferreira-da-Silva 7-fold & $\mu_{7,2}+1$  & $D_{7}$ & \citep{Franco1994}\tabularnewline
\hline 
7 & Maloney's 7-fold & $\mu_{7,2}+1$  & $D_{7}$ & \citep{HFonl}\linebreak \citep[Fig. 9]{2014arXiv1404.5193G}\tabularnewline
\hline 
7 & \multirow{3}{0.35\textwidth}{Cyclotomic Trapezoids} &  &  & \multirow{3}{0.4\textwidth}{\citep{frettloeh1998,HFonl}}\tabularnewline
\cline{1-1} 
9 &  & $\mu_{n,2}+1$ & $D_{1}$ & \tabularnewline
\cline{1-1} 
11 &  &  &  & \tabularnewline
\hline 
4 & Watanabe-Ito-Soma 8-fold & $\mu_{4,2}+2$ & $D_{8}$ & \citep{Watanabe1986,Watanabe1987,Watanabe1995,HFonl}\tabularnewline
\hline 
4 & \multirow{3}{0.35\textwidth}{Generalized Goodman-Strauss rhomb} &  & $D_{1}$ & \multirow{3}{0.4\textwidth}{\citep{journals/dcg/Harriss05}\linebreak \citep[credited to C. Goodman-Strauss and E. O. Harris]{HFonl}}\tabularnewline
\cline{1-1} \cline{4-4} 
5 &  & $\mu_{n,2}+2$ & $C_{5}$,$D_{1}$ & \tabularnewline
\cline{1-1} \cline{4-4} 
$\geq6$ &  &  & $D_{1}$ & \tabularnewline
\hline 
6 & Watanabe-Soma-Ito 12-fold\linebreak (variants) & $\mu_{6,2}+2$ & $D_{12}$ & \citep{Watanabe1995,HFonl}\tabularnewline
\hline 
6 & Socolar & $\mu_{6,2}+2$ & $D_{2}$ & \citep{Socolar1989,HFonl,Niizeki1988}\tabularnewline
\hline 
6 & Stampfli-Gähler / Ship & $\mu_{6,2}+2$ & $D_{12}$ & \citep{STAMPFLI1986,Gaehler1988,BenAbraham2001}\tabularnewline
\hline 
6 & Square Triangle & $\mu_{6,2}+2$ & $D_{6}$ & \citep{Hermisson97aguide,1999math.ph...1014B,Baake2002,Frettloeh2011}\linebreak \citep[credited to M. Schlottmann]{HFonl}\linebreak \citep[Ch. 6.3.1]{oro38933}\tabularnewline
\hline 
5 & Cromwell & $\mu_{5,2}+3$ & $D_{10}$ & \citep[Fig. 12, 13]{Cromwell2008}\tabularnewline
\hline 
5 & Topkapi Scroll & $2\mu_{5,2}+2$ & $D_{10}$ & Herein Fig.~\ref{fig:Girih5-Topkapi}, derived from \citep[Fig. 14, 15]{Cromwell2008}
and patterns shown in the Topkapi Scroll, in detail \citep[Panels 28, 31, 32, 34]{necipoglu1995topkapi} \tabularnewline
\hline 
5 & Bowtie-Hexagon-Decagon 2 & $2\mu_{5,2}+3$ & $C_{5}$ & \citep[credited to L. Andritz]{HFonl}\tabularnewline
\hline 
5 & Bowtie-Hexagon-Decagon 3 & $3\mu_{5,2}+2$ & $C{}_{5}$ & \citep[credited to L. Andritz]{HFonl}\tabularnewline
\hline 
5 & Darb-i Imam Shrine & $4\mu_{5,2}+2$  & $D_{10}$ & \citep{Lu2007,Lu2007a,bridges2008:297}\linebreak \citep[Fig. 21]{Cromwell2008}\tabularnewline
\hline 
7 & Franco's 7-fold & $\mu_{7,3}+\mu_{7,2}+1$ & $D_{7}$ & \citep{Franco1993}\tabularnewline
\hline 
7 & Gähler-Kwan-Maloney 7-fold & $\mu_{7,3}+\mu_{7,2}+1$ & $D_{7}$ & \citep[Fig. 10]{2014arXiv1404.5193G}\tabularnewline
\hline 
7 & Socolar's 7-fold & $\mu_{7,3}+2\mu_{7,2}+1$ & $D_{7}$ & \citep[credited to J. Socolar]{HFonl}\tabularnewline
\hline 
9 & Franco-da-Silva-Inácio 9-fold & $\mu_{9,4}+\mu_{9,3}+\mu{}_{9,2}+1$ & $D_{9}$ & \citep{Franco1996}\tabularnewline
\hline 
11 & Maloney's 11-fold & $\mu_{11,6}+2\mu_{11,5}+2\mu_{11,4}+2\mu_{11,3}+2\mu_{11,2}+1$ & $D_{11}$ & \citep{Maloney2014,DBLP:journals/dmtcs/Maloney15}\tabularnewline
\hline 
\end{tabular}

}
\end{center}
\end{table}

\section*{Acknowledgment}

The author dedicates this paper to his daughter Lili and his parents
Marita and Herbert. He would like to thank M. Baake, D. Frettlöh,
U. Grimm, R. Lück and C. Mayr for their support and encouragement.

The author is aware that this article might not meet everyone's standards
and expectations regarding a mathematical scientific paper. He kindly
asks for the readers indulgence and hopes that the content and the
sketched ideas herein are helpful for further research despite possible
formal issues. The recent publications of G. Maloney \citep{Maloney2014,DBLP:journals/dmtcs/Maloney15},
J. Kari and M. Rissanen \citep{2015arXiv151201402K,Kari2016} and
T. Hibma \citep{2015arXiv150902053H,THonl}, who found similar results,
demanded a response on short notice.

\clearpage{}

\bibliographystyle{halpha}
\bibliography{girih7-reference__}

\begin{thebibliography}{BAGLG01}

\bibitem[AGS92]{journals/dcg/AmmannGS92}
R.~Ammann, B.~Gr\"{u}nbaum, and G.~C. Shephard.
\newblock Aperiodic tiles.
\newblock {\em Discrete \& Computational Geometry}, 8:1--25, 1992.

\bibitem[All04]{AllenT2004}
T.~Allen.
\newblock {\em Islamic Art and the Argument from Academic Geometry}.
\newblock Solipsist Press, Occidental, 2004.

\bibitem[{Baa}99]{1999math.ph...1014B}
M.~{Baake}.
\newblock {A guide to mathematical quasicrystals}.
\newblock {\em ArXiv Mathematical Physics e-prints}, January 1999,
  math-ph/9901014.

\bibitem[Baa02]{Baake2002}
M.~Baake.
\newblock {\em Quasicrystals: An Introduction to Structure, Physical Properties
  and Applications}, chapter A Guide to Mathematical Quasicrystals, pages
  17--48.
\newblock Springer Berlin Heidelberg, Berlin, Heidelberg, 2002.

\bibitem[BAGLG01]{BenAbraham2001}
S.~I. Ben-Abraham, P.~Gummelt, R.~Lück, and F.~Gähler.
\newblock Dodecagonal tilings almost covered by a single cluster.
\newblock {\em Ferroelectrics}, 250(1-4):313--316, 2001.

\bibitem[Bee82]{Beenker}
F.~P.~M. Beenker.
\newblock Algebraic theory of non-periodic tilings of the plane by two simple
  building blocks: a square and a rhombus.
\newblock Technical Report 82-WSK04, Eindhoven University of Technology, 1982.

\bibitem[Ber66]{Berger1966}
R.~Berger.
\newblock The undecidability of the domino problem.
\newblock {\em Memoirs of the American Mathematical Society}, 66:1–72, 1966.

\bibitem[BG13]{oro38933}
M.~Baake and U.~Grimm.
\newblock {\em Aperiodic Order. Vol 1. A Mathematical Invitation}, volume 149
  of {\em Encyclopedia of Mathematics and its Applications}.
\newblock Cambridge University Press, Cambridge, September 2013.

\bibitem[Bou73]{Bourgoin1973}
J.~Bourgoin.
\newblock {\em Arabic Geometrical Pattern and Design}.
\newblock Dover pictorial archive series. Dover Publications, 1973.

\bibitem[Cro08]{Cromwell2008}
P.~R. Cromwell.
\newblock The search for quasi-periodicity in islamic 5-fold ornament.
\newblock {\em The Mathematical Intelligencer}, 31(1):36--56, 2008.

\bibitem[dB81]{NG1981}
N.~G. de~Bruijn.
\newblock {Algebraic theory of Penrose's non-periodic tilings of the plane I
  and II}.
\newblock {\em Indagationes Mathematicae-new Series}, 84(1):39--66, 1981.

\bibitem[dB86]{DEBRUIJNN.G.1986}
N.~G. de~Bruijn.
\newblock Dualization of multigrids.
\newblock {\em J. Phys. Colloques}, 47:C3--9--C3--18, 1986.

\bibitem[EBL82]{escher1982m}
M.~C. Escher, F.~Bool, and J.~L. Locher.
\newblock {\em M. C. Escher, his life and complete graphic work: with a fully
  illustrated catalogue}.
\newblock H. N. Abrams, 1982.

\bibitem[FdSI96]{Franco1996}
B.~J.~O. Franco, F.~W.~O. da~Silva, and E.~C. Inácio.
\newblock A non-quadratic irrationality associated to an enneagonal
  quasiperiodic tiling of the plane.
\newblock {\em physica status solidi (b)}, 195(1):3--9, 1996.

\bibitem[FFdS94]{Franco1994}
B.~J.~O. Franco, J.~R.~F. Ferreira, and F.~W.~O. da~Silva.
\newblock A third-order {Fibonacci} sequence associated to a heptagonal
  quasiperiodic tiling of the plane.
\newblock {\em physica status solidi (b)}, 182(2):K57--K62, 1994.

\bibitem[FGH]{HFonl}
D.~Frettl\"{o}h, F.~Gähler, and E.~O. Harris.
\newblock Tilings encyclopedia.
\newblock available at \url{http://tilings.math.uni-bielefeld.de/}.

\bibitem[FH13]{Frettloeh2013}
D.~Frettlöh and E.~O. Harriss.
\newblock Parallelogram tilings, worms, and finite orientations.
\newblock {\em Discrete \& Computational Geometry}, 49(3):531--539, 2013.

\bibitem[Fra93]{Franco1993}
B.~J.~O. Franco.
\newblock {Third-order Fibonacci sequence associated to a heptagonal
  quasiperiodic tiling of the plane}.
\newblock {\em Physics Letters A}, 178(1-2):119--122, 1993.

\bibitem[Fre98]{frettloeh1998}
D.~Frettlöh.
\newblock {Inflationäre Pflasterungen der Ebene mit D2m+1-Symmetrie und
  minimaler Musterfamilie, Betreuer Ludwig Danzer, Diploma thesis, Universität
  Dortmund}, 1998.

\bibitem[{Fre}08]{2008PMag...88.2033F}
D.~{Frettlöh}.
\newblock {About substitution tilings with statistical circular symmetry}.
\newblock {\em Philosophical Magazine}, 88:2033--2039, May 2008, 0803.2172.

\bibitem[Fre11]{Frettloeh2011}
D.~Frettlöh.
\newblock A fractal fundamental domain with 12-fold symmetry.
\newblock {\em Symmetry: Culture and Science}, 22(1-2):237--246, 2011.

\bibitem[Gä88]{Gaehler1988}
F.~Gähler.
\newblock Crystallography of dodecagonal quasicrystals.
\newblock In Ch. Janot and J.M. Dubois, editors, {\em {Quasicrystalline
  materials: proceedings of the I.L.L./CODEST workshop}}, pages 272--284. World
  Scientific, 1988.

\bibitem[Gä93]{Gaehler1993}
F.~Gähler.
\newblock Matching rules for quasicrystals : the composition-decomposition
  method.
\newblock {\em Journal of Non-Crystalline Solids}, 153/154:160--164, 1993.

\bibitem[Gar63]{Gardner1963}
M.~Gardner.
\newblock On 'rep-tiles', polygons that can make larger and smaller copies of
  themselves.
\newblock {\em Scientific American}, 208:154--164, May 1963.

\bibitem[Gar77]{Gardner1977}
M.~Gardner.
\newblock Extraordinary nonperiodic tiling that enriches the theory of tiles.
\newblock {\em Scientific American}, 236:110--119, 1977.

\bibitem[GKM14]{2014arXiv1404.5193G}
F.~{G{\"a}hler}, E.~E. {Kwan}, and G.~R. {Maloney}.
\newblock {A computer search for planar substitution tilings with n-fold
  rotational symmetry}.
\newblock {\em ArXiv e-prints}, April 2014, 1404.5193.

\bibitem[GL92]{C.Godreche1992}
C.~Godrèche and F.~Lançon.
\newblock A simple example of a non-pisot tiling with five-fold symmetry.
\newblock {\em J. Phys. I France}, 2(2):207--220, 1992.

\bibitem[GR86]{0305-4470-19-2-020}
F.~Gähler and J.~Rhyner.
\newblock Equivalence of the generalised grid and projection methods for the
  construction of quasiperiodic tilings.
\newblock {\em Journal of Physics A: Mathematical and General}, 19(2):267,
  1986.

\bibitem[GS87]{Grunbaum:1986:TP:19304}
B.~Gr\"{u}nbaum and G.~C. Shephard.
\newblock {\em Tilings and Patterns}.
\newblock W. H. Freeman \& Co., New York, NY, USA, 1987.

\bibitem[GS98]{GoodmanStrauss1998}
C.~Goodman-Strauss.
\newblock Matching rules and substitution tilings.
\newblock {\em Annals of Mathematics}, 157:181--223, 1998.

\bibitem[GS03]{goodman2003}
C.~Goodman-Strauss.
\newblock Matching rules for the sphinx substitution tiling.
\newblock unpublished notes, available at \url{http://comp.uark.edu/~strauss/},
  2003.

\bibitem[Har05]{journals/dcg/Harriss05}
E.~O. Harriss.
\newblock Non-periodic rhomb substitution tilings that admit order n rotational
  symmetry.
\newblock {\em Discrete \& Computational Geometry}, 34(3):523--536, 2005.

\bibitem[Hib]{THonl}
T.~Hibma.
\newblock Aperiodic rhomb tiling.
\newblock available at \url{http://www.hibma.org/wpaperiodictiling/}.

\bibitem[{Hib}15]{2015arXiv150902053H}
T.~{Hibma}.
\newblock Generalization of non-periodic rhomb substitution tilings.
\newblock {\em ArXiv e-prints}, September 2015, 1509.02053.

\bibitem[HRB97]{Hermisson97aguide}
J.~Hermisson, C.~Richard, and M.~Baake.
\newblock A guide to the symmetry structure of quasiperiodic tiling classes.
\newblock {\em J. Phys. France}, 7:1003--18, 1997.

\bibitem[Ing92]{Ingalls:sp0042}
R.~Ingalls.
\newblock {Decagonal quasicrystal tilings}.
\newblock {\em Acta Crystallographica Section A}, 48(4):533--541, Jul 1992.

\bibitem[Ing93]{Ingalls1993177}
R.~Ingalls.
\newblock Octagonal quasicrystal tilings.
\newblock {\em Journal of Non-Crystalline Solids}, 153–154:177 -- 180, 1993.
\newblock Proceddings of the Fourth International Conference on Quasicrystals.

\bibitem[Ken93]{Kenyon93}
R.~Kenyon.
\newblock Tiling a polygon with parallelograms.
\newblock {\em Algorithmica}, 9(4):382--397, 1993.

\bibitem[KR15]{2015arXiv151201402K}
J.~{Kari} and M.~{Rissanen}.
\newblock {Sub Rosa, a system of quasiperiodic rhombic substitution tilings
  with n-fold rotational symmetry}.
\newblock {\em ArXiv e-prints}, December 2015, 1512.01402.

\bibitem[KR16]{Kari2016}
J.~Kari and M.~Rissanen.
\newblock Sub rosa, a system of quasiperiodic rhombic substitution tilings with
  n-fold rotational symmetry.
\newblock {\em Discrete {\&} Computational Geometry}, 55(4):972--996, 2016.

\bibitem[KS92]{KannanS92}
S.~Kannan and D.~Soroker.
\newblock Tiling polygons with parallelograms.
\newblock {\em Discrete \& Computational Geometry}, 7:175--188, 1992.

\bibitem[Lü90]{LUCK1990832}
R.~Lück.
\newblock Penrose sublattices.
\newblock {\em Journal of Non-Crystalline Solids}, 117:832 -- 835, 1990.

\bibitem[Lü93]{LUeCK1993}
R.~Lück.
\newblock Basic ideas of ammann bar grids.
\newblock {\em International Journal of Modern Physics B},
  07(06n07):1437--1453, 1993.

\bibitem[Lü00]{Lueck2000263}
R.~Lück.
\newblock {Dürer-Kepler-Penrose, the development of pentagon tilings}.
\newblock {\em Materials Science and Engineering: A}, 294-296:263--267, 2000.

\bibitem[Lag96]{lagarias1996}
J.~C. Lagarias.
\newblock Meyer's concept of quasicrystal and quasiregular sets.
\newblock {\em Comm. Math. Phys.}, 179(2):365--376, 1996.

\bibitem[LB88]{LanconF.1988}
F.~Lançon and L.~Billard.
\newblock Two-dimensional system with a quasi-crystalline ground state.
\newblock {\em J. Phys. France}, 49(2):249--256, 1988.

\bibitem[Lee87]{LeeAJ1987}
A.~J. Lee.
\newblock Islamic star patterns.
\newblock {\em Muqarnas}, 4:182--197, 1987.

\bibitem[LL94]{Lueck1994139}
R.~Lück and K.~Lu.
\newblock Non-locally derivable sublattices in quasi-lattices.
\newblock {\em Journal of Alloys and Compounds}, 209(1–2):139 -- 143, 1994.

\bibitem[LS07a]{Lu2007}
P.~J. Lu and P.~J. Steinhardt.
\newblock Decagonal and quasi-crystalline tilings in medieval islamic
  architecture.
\newblock {\em Science}, 315:1106--1110, 2007.

\bibitem[LS07b]{Lu2007a}
P.~J. Lu and P.~J. Steinhardt.
\newblock Supporting online material for decagonal and quasi-crystalline
  tilings in medieval islamic architecture, 2007.

\bibitem[{Mal}14]{Maloney2014}
G.~R. {Maloney}.
\newblock On substitution tilings of the plane with n-fold rotational symmetry.
\newblock {\em ArXiv e-prints}, aug 2014, 1409.1828.

\bibitem[Mal15]{DBLP:journals/dmtcs/Maloney15}
G.~R. Maloney.
\newblock On substitution tilings of the plane with n-fold rotational symmetry.
\newblock {\em Discrete Mathematics {\&} Theoretical Computer Science},
  17(1):395--412, 2015.

\bibitem[Man77]{Mandelbrot77a}
B.~B. Mandelbrot.
\newblock {\em The Fractal Geometry of Nature}.
\newblock W. H. Freeman and Company, 1977.

\bibitem[{Mat}]{MathPages1}
{Math Pages}.
\newblock Non-periodic tilings with n-fold symmetry.
\newblock available at \url{http://mathpages.com/home/kmath539/kmath539.htm}.

\bibitem[Mey72]{meyer1972}
Y.~Meyer.
\newblock {\em Algebraic Numbers and Harmonic Analysis}.
\newblock North-Holland, 1972.

\bibitem[Moo97]{Moody97}
R.~V. Moody.
\newblock {\em {The Mathematics of Long-Range Aperiodic Order}}.
\newblock Springer, 1997.

\bibitem[NAA95]{necipoglu1995topkapi}
G.~Necipo{\u{g}}lu and M.~Al-Asad.
\newblock {\em The Topkap{\i} Scroll: Geometry and Ornament in Islamic
  Architecture : Topkap{\i} Palace Museum Library MS}.
\newblock Number Bd. 1956 in Sketchbooks \& albums. Getty Center for the
  History of Art and the Humanities, 1995.

\bibitem[ND96]{Nischke1996}
K.-P. Nischke and L.~Danzer.
\newblock A construction of inflation rules based on n-fold symmetry.
\newblock {\em Discrete \& Computational Geometry}, 15(2):221--236, 1996.

\bibitem[Nii88]{Niizeki1988}
K.~Niizeki.
\newblock A self-similar dodecagonal quasiperiodic tiling of the plane in terms
  of squares, regular hexagons and thin rhombi.
\newblock {\em Journal of Physics A: Mathematical and General}, 21(9):2167,
  1988.

\bibitem[NM87]{Niizeki1987}
N.~Niizeki and H.~Mitani.
\newblock Two-dimensional dodecagonal quasilattices.
\newblock {\em Journal of Physics A: Mathematical and General}, 20(6):L405,
  1987.

\bibitem[Pen74]{Penrose1974}
R.~Penrose.
\newblock The role of aesthetics in pure and applied mathematical research.
\newblock {\em Bulletin of the Institute of Mathematics and its Applications},
  10:266ff., 1974.

\bibitem[Pen79]{Penrose1979}
R.~Penrose.
\newblock Pentaplexity a class of non-periodic tilings of the plane.
\newblock {\em The Mathematical Intelligencer}, 2(1):32--37, 1979.

\bibitem[SBGC84]{PhysRevLett.53.1951}
D.~Shechtman, I.~Blech, D.~Gratias, and J.~W. Cahn.
\newblock Metallic phase with long-range orientational order and no
  translational symmetry.
\newblock {\em Phys. Rev. Lett.}, 53:1951--1953, Nov 1984.

\bibitem[Sch98]{scheffer1998}
M.~Scheffer.
\newblock {\em {Festk{\"o}rperreaktionen in quasikristallinen Legierungen}}.
\newblock PhD thesis, {Universität Stuttgart and Max-Planck-Institut für
  Metallforschung Stuttgart}, 1998.

\bibitem[Soc89]{Socolar1989}
J.~E.~S. Socolar.
\newblock Simple octagonal and dodecagonal quasicrystals.
\newblock {\em Phys. Rev. B}, 39:10519--10551, May 1989.

\bibitem[Soc90]{Socolar1990}
J.~E.~S. Socolar.
\newblock Weak matching rules for quasicrystals.
\newblock {\em Communications in Mathematical Physics}, 129(3):599--619, 1990.

\bibitem[Sta86]{STAMPFLI1986}
P.~Stampfli.
\newblock A dodecagonal quasi-periodic lattice in 2 dimensions.
\newblock {\em Helvetica Physica Acta}, 59(6-7):1260--1263, 1986.

\bibitem[Ste97]{zbMATH01071804}
P.~Steinbach.
\newblock {Golden fields: A case for the heptagon.}
\newblock {\em {Math. Mag.}}, 70(1):22--31, 1997.

\bibitem[Ten08]{bridges2008:297}
R.~Tennant.
\newblock Medieval islamic architecture, quasicrystals, and penrose and girih
  tiles: Questions from the classroom.
\newblock In Reza Sarhangi and Carlo~H. S\'equin, editors, {\em Bridges
  Leeuwarden: Mathematics, Music, Art, Architecture, Culture}, pages 297--304,
  London, 2008. Tarquin Publications.
\newblock Available online at
  \url{http://archive.bridgesmathart.org/2008/bridges2008-297.html}.

\bibitem[vK04]{HvK1904}
H.~von Koch.
\newblock Sur une courbe continue sans tangente, obtenue par une construction
  géométrique élémentaire.
\newblock {\em Archiv för Matemat., Astron. och Fys.}, 1:681--702, 1904.

\bibitem[vK06]{HvK1906}
H.~von Koch.
\newblock Une méthode géométrique élémentaire pour l'étude de certaines
  questions de la théorie des courbes planes.
\newblock {\em Acta Math.}, 30:145--174, 1906.

\bibitem[Vod36]{Voderberg1936}
H.~Voderberg.
\newblock {Zur Zerlegung der Umgebung eines ebenen Bereiches in kongruente.}
\newblock {\em Jahresbericht der Deutschen Mathematiker-Vereinigung},
  46:229--231, 1936.

\bibitem[Vod37]{Voderberg1937}
H.~Voderberg.
\newblock {Zur Zerlegung der Ebene in kongruente Bereiche in Form einer
  Spirale.}
\newblock {\em Jahresbericht der Deutschen Mathematiker-Vereinigung},
  47:159--160, 1937.

\bibitem[War88]{Warrington1988a}
D.~H. Warrington.
\newblock Two dimensional inflation patterns and corresponding diffraction
  patterns.
\newblock In Ch. Janot and J.M. Dubois, editors, {\em {Quasicrystalline
  materials: proceedings of the I.L.L./CODEST workshop}}, pages 243--254. World
  Scientific, 1988.

\bibitem[WIS87]{Watanabe1987}
Y.~Watanabe, M.~Ito, and T.~Soma.
\newblock {Nonperiodic tessellation with eightfold rotational symmetry}.
\newblock {\em Acta Crystallographica Section A}, 43(1):133--134, Jan 1987.

\bibitem[WISB86]{Watanabe1986}
Y.~Watanabe, M.~Ito, T.~Soma, and T.~Betsumiya.
\newblock Nonperiodic tesselation with eight-fold rotational symmetry.
\newblock In R.~Takaki S.~Ishizaka, Editors: Y.~Kato and J.~Toriwaki, editors,
  {\em Science on Form: Proceedings of the First International Symposium for
  Science on Form}, pages 471--477. KTK Scientific Publishers, Tokyo, 1986.

\bibitem[WSI95]{Watanabe1995}
Y.~Watanabe, T.~Soma, and M.~Ito.
\newblock {A new quasiperiodic tiling with dodecagonal symmetry}.
\newblock {\em Acta Crystallographica Section A}, 51(6):936--942, Nov 1995.

\end{thebibliography}

\end{document}